\newtheorem{theorem}{Theorem}[section]
\newtheorem{lemma}[theorem]{Lemma}
\newtheorem{proposition}[theorem]{Proposition}
\newtheorem{corollary}[theorem]{Corollary}
\theoremstyle{definition}
\newtheorem{remark}[theorem]{Remark}
\numberwithin{equation}{section}
\newcommand{\op}[1]{\textrm{\upshape #1}}
\newcommand{\join}{\vee}
\newcommand{\meet}{\wedge}
\newcommand{\la}{\langle}
\newcommand{\ra}{\rangle}
\newcommand{\alg}[1]{{\textbf{\upshape #1}}}  %
\newcommand{\vv}[1]{\mathsf {#1}}
\renewcommand{\d}{\delta}
\newcommand{\f}{\varphi}
\newcommand{\e}{\varepsilon}
\renewcommand{\th}{\theta}
\renewcommand{\o}{\omega}
\newcommand{\sse}{\subseteq}
\newcommand{\app}{\approx}
\newcommand{\HH}{{\mathbf H}}  
\newcommand{\II}{{\mathbf I}} 
\newcommand{\SU}{{\mathbf S}} 
\newcommand{\PP}{{\mathbf P}}   
\newcommand{\VV}{{\mathbf V}}   
\newcommand{\QQ}{{\mathbf Q}}
\newcommand{\itemb}{\item[$\bullet$]}
\newcommand{\OO}{{\mathbf O}}
\newcommand{\MTL}{\vv M\vv T\vv L}
\newcommand{\BL}{\vv B\vv L}
\newcommand{\ib}{\item[$\bullet$]}
\newcommand{\Con}[1]{\operatorname{Con}(\alg #1)}
\newcommand{\vuc}[2]{#1_1,\dots,#1_{#2}}
\newcommand{\imp}{\rightarrow}
\newcommand{\cc}[1]{\mathcal{#1}}
\newcommand{\lextimes}{\times_l}
\newcommand{\WH}{\mathsf{WH}}
\newcommand{\ol}[1]{\overline {#1}}
\begin{document}
\title[Positive \L ukasiewicz logic]{Structurally complete finitary extensions of positive \L ukasiewicz logic}

\author{Paolo Aglian\`{o}}
\address{DIISM, Universit\`a di Siena, Siena, Italy}
\email{agliano@live.com}

\author{Francesco Manfucci}
\address{DIISM, Universit\`a di Siena, Siena, Italy}
\email{francesco.manfucci@student.unisi.it}

\begin{abstract} In this paper we study $\mathcal{MV}^+$, i.e. the positive fragment of {\L}ukasiewicz Multi-Valued Logic $\mathcal{MV}$.
In particular we describe all the finitary extensions of $\mathcal{MV}^+$ that are structurally complete and all the
axiomatic extensions of $\mathcal{MV}^+$ that are hereditarily structurally complete. Examples of hereditarily structurally complete finitary extensions and
non hereditarily structurally complete finitary extensions are provided.	
\end{abstract}
\maketitle

\section{Introduction}

A large class of substructural logics (i.e. logics that lack some structural rules) is given by the extensions of the {\em Full Lambek Calculus} $\mathcal{FL}$ (see \cite{GJKO} p. 76).  All extensions of $\mathcal{FL}$ have a primitive connective $\mathbf{0}$ that denotes {\em falsum}; so if $\mathcal{L}$ is an extension of $\mathcal{FL}$ it makes sense to study its {\em positive} fragment $\mathcal{L}^+$. The language of $\mathcal L^+$ is obtained by the lnguage of $\mathcal L$ by deleting $\mathbf{0}$ from the signature; the valid derivations in $\mathcal L^{+}$ are the valid derivations in $\mathcal L$, that contains only $0$-free formulas.

An important subfamily of substructural logics over $\mathcal{FL}$ consists of logics that satisfy both exchange and weakening but lack contraction; in this case the primitive connectives can be taken as $\join,\meet,\imp,\mathbf{0},\mathbf{1}$ where of course $\mathbf{1}$ represents {\em truth}. The minimal substructural logic of this kind is denoted by $\mathcal{FL}_{ew}$ and hence all the substructural logics lacking contraction are an extension of it.  Probably Intuitionistic Logic $\mathcal{IL}$ is the most famous of them but there are many others such as the monoidal logic $\mathcal{MTL}$ \cite{EstevaGodo2001}, Hajek's Basic Logic $\mathcal{BL}$ \cite{Hajek1998}, Dummet's Logic $\mathcal{GL}$ \cite{Hajek1998}, Product Logic $\mathcal{PL}$ \cite{Hajek1998} and, last but not least, \L ukasiewicz's Many Valued logic $\mathcal {MV}$ \cite{Komori1981}.

All extensions of $\mathcal{FL}$ are {\em algebraizable} with an {\em equivalent algebraic semantics} (in the sense of Blok-Pigozzi\cite{BlokPigozzi1989}) that is at least a quasivariety of algebras; by the same token all positive fragments are algebraizable as well and moreover the machinery of algebrization takes a very transparent form in both cases. The equivalent algebraic semantics of $\mathcal{FL}$ and $\mathcal{FL}^+$ are the variety of $\mathsf{FL}$-algebras \cite{GJKO} and the variety of residuated lattices \cite{JipsenTsinakis2002} respectively.
Algebraizability in Blok-Pigozzi's fashion entails that
the deducibility relation of an algebraizable logic is characterized by means of the algebraic
equational consequence of its equivalent algebraic semantics. Therefore, interesting properties
of algebraizable logics can be studied via algebraic means.

Hoops are a particular variety of residuated monoids related to logic which were defined in
an unpublished manuscript by B\"uchi and Owens, inspired by the work of on partially
ordered monoids in \cite{Bosbach1969}. Hoops indeed correspond to residuated commutative monoids whose order is the {\em inverse divisibility ordering}, i.e.
$a\le b$ if and only if there exists $c$ such
that $a = bc$. The first systematic study of hoops has been carried out by Ferreirim in her PhD
thesis \cite{Ferr1992} and in her works with Blok \cite{BlokFerr2000}; the connection between hoops and substructural logics has been investigated in \cite{AFM}.
A subvariety of hoops, the variety $\mathsf{WH}$ of {\em Wajsberg hoops}, plays a special role in this framework. From the algebraic point of view, they
can be used to describe subdirectly irreducible hoops, and the whole variety of hoops can be
obtained as the join of iterated powers of the variety of Wajsberg hoops, in the sense defined
in \cite{BlokFerr2000}. Wajsberg hoops also have a peculiar connection with lattice-ordered abelian groups
(abelian $\ell$-groups for short). In fact, the variety of Wajsberg hoops is generated by its
totally ordered members, that are, in loose terms, either negative cones of abelian $\ell$-groups, or
intervals in abelian $\ell$-groups \cite{AglianoPanti1999}. However the most relevant property of Wajsberg
hoops in this context is that they are the equivalent algebraic semantics of the positive fragment of \L ukasiewicz logic
$\mathcal{MV}$.  This will allow us to reduce logical questions, such as the structural completeness of its finitary extensions,
to a purely algebraic matter.

\section{Preliminaries}

\subsection{Universal algebra} \label{subsec:universal}  Let $\vv K$ be a class of algebras; we denote by $\II,\HH,\PP,\SU,\PP_u$ the class operators sending $\vv K$ in the class of all isomorphic copies, homomorphic images, direct products, subalgebras and ultraproducts of members of $\vv K$. The operators can and will be composed in the obvious way; for instance $\SU\PP(\vv K)$ denotes all algebras that are embeddable in a direct product of members of $\vv K$; moreover there are relations among the classes resulting from applying operators in a specific order, for instance $\PP\SU(\vv K) \sse \SU\PP(\vv K)$ and $\HH\SU\PP(\vv K)$ is the largest class we can obtain composing the operators. We will use all the known relations without further notice; for those and for any other unexplained algebraic notion we refer the reader to \cite{BurrisSanka} for a textbook treatment.

 If $\tau$ is a type of algebras, an {\bf equation} is a pair $p,q$ of $\tau$-terms (i.e. elements of the absolutely free algebra $\alg T_\tau(\o)$) that we write suggestively as $p \app q$; a {\bf universal sentence} in $\tau$ is   $\Sigma \Rightarrow \Gamma$ where $\Sigma,\Gamma$ are finite sets of equations; a universal sentence is a {\bf quasiequation} if $|\Gamma| = 1$ and it is {\bf negative} if $\Delta= \emptyset$. Clearly an equation is a quasiequation in which $\Sigma = \emptyset$.
 An equation $p(\vuc xn) \app q(\vuc xn)$ is {\bf valid} in $\alg A$ (and we write $\alg A \vDash \Sigma \Rightarrow \Delta$) if for all $\vuc an \in A$, $p(\vuc an) = q(\vuc an)$; if $\Sigma$ is a set of equations then
 $\alg A \vDash \Sigma$ if $\alg A \vDash \sigma$ for all $\sigma \in \Sigma$.
 A universal sentence is {\bf valid} in $\alg A$ (and we write $\alg A \vDash \Sigma \Rightarrow \Delta$) if whenever  $\alg A \vDash \Sigma$ there is a $\e \app \d \in \Delta$ with
 $\alg A \vDash \e \app \d$; in other words a universal sentence can be understood as the formula $\forall \mathbf x(\bigwedge \Sigma \imp \bigvee \Delta)$.  An equation or a universal sentence is {\bf valid} in a class $\vv K$ if it is valid in all algebras in $\vv K$.

A class of algebras is a variety if it is closed under $\HH, \SU$ and $\PP$,  a quasivariety if it is closed under $\II$,$\SU$,$\PP$ and $\PP_u$ and a universal class if it is closed under $\II\SU\PP_u$.
 The following facts were essentially discovered by A. Tarski, J. \L\`os and A. Lyndon in the pioneering phase of model theory; for proof of this and similar statements the reader can consult \cite{ChangKeisler}.

 \begin{lemma}\label{lemma:ISP}
 Let $\vv K$ be any class of algebras. Then:
 \begin{enumerate}
 \item  $\vv K$ is a universal class if and only if $\II\SU\PP_u(\vv K) = \vv K$ if and only if it is the class of all algebras in which a set  of universal sentences is valid;
\item  $\vv K$ is a quasivariety if and only if $\II\SU\PP\PP_u(\vv K) = \vv K$ if and only if it is the class of all  algebras in which a set  of quasiequations  is valid;
\item $\vv K$ is a variety if and only if $\HH\SU\PP(\vv K) = \vv K$ if and only if it is the class of all algebras in which a set of equations is valid.
\end{enumerate}
  \end{lemma}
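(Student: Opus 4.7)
The plan is to establish each equivalence by proving the easy direction (that sentences of the prescribed syntactic form are preserved under the listed class operators) and then the harder converse (that closure under those operators forces axiomatisability by sentences of the corresponding shape). Throughout, \L o\'s's theorem on ultraproducts is the pivotal tool: a first-order sentence holds in $\prod_{i \in I}\alg A_i/\mathcal U$ iff the set of indices where it holds belongs to $\mathcal U$, which is what yields preservation under $\PP_u$ for quasiequations and universal sentences.

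First I would dispose of the preservation direction. That equations survive $\HH$, $\SU$ and $\PP$ is a term-induction argument that every introductory text records; similarly, quasiequations and universal sentences survive $\II$, $\SU$ and $\PP$. Preservation under $\PP_u$ for quasiequations and universal sentences then follows directly from \L o\'s's theorem, since both are first-order sentences whose truth is transferred through ultraproducts.

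For the converses I would use free-algebra style arguments. Suppose $\vv K$ is closed under $\HH\SU\PP$ and let $\Sigma$ be the set of all equations valid in $\vv K$; take $\alg A \vDash \Sigma$. Pick an absolutely free algebra $\alg T_\tau(X)$ with $|X|\ge |A|$, fix a surjection $\alg T_\tau(X)\twoheadrightarrow \alg A$, and factor through the $\vv K$-free algebra $\alg F_{\vv K}(X)$, which by construction lies in $\SU\PP(\vv K)\sse \vv K$; this delivers $\alg A\in\HH(\vv K)\sse \vv K$. For quasivarieties the argument is analogous but uses the $\vv K$-relatively free algebra, where the congruence on $\alg T_\tau(X)$ is generated by instances of quasiequations valid in $\vv K$; one needs this algebra to land in $\II\SU\PP\PP_u(\vv K) = \vv K$, and here closure under $\PP_u$ is what guarantees that the kernel is compatible with the quasiequational theory. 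For universal classes the cleanest route is a compactness argument via the method of diagrams: if every universal sentence valid in $\vv K$ holds in $\alg A$, then the elementary diagram of $\alg A$ is finitely consistent with the theory of $\vv K$, which produces an embedding of $\alg A$ into an ultraproduct of members of $\vv K$, hence $\alg A \in \II\SU\PP_u(\vv K)$.

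The main obstacle I anticipate is calibrating the cardinality of $X$ and cleanly linking the model-theoretic compactness/ultraproduct step with the algebraic closure operators; in particular, verifying that the relatively free algebra lands in $\vv K$ itself (rather than merely in some larger operator hull) is where closure under $\PP_u$ becomes indispensable, and this subtlety is exactly what distinguishes Birkhoff's variety theorem from Mal'cev's quasivariety theorem and the \L o\'s--Tarski preservation theorem. Since all three statements are classical, I would ultimately hand the reader off to \cite{ChangKeisler} or \cite{BurrisSanka} rather than reproduce the full technical apparatus.
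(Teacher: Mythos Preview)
Your proposal is correct, and in fact the paper gives no proof of this lemma at all: it simply states that these facts are classical results of Tarski, \L o\'s and Lyndon and refers the reader to \cite{ChangKeisler}. Your sketch goes further than the paper by outlining the actual arguments, but since you also conclude by deferring to \cite{ChangKeisler} and \cite{BurrisSanka}, your approach is essentially the same as the paper's.
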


We will often write $\VV$ for $\HH\SU\PP$ and $\QQ$ for $\II\SU\PP\PP_u$. It is clear that both $\VV$ and $\QQ$ are closure operators on classes of algebras of the same type; this implies among other things that for a given variety $\vv V$ the class of subvarieties of $\vv V$ is a complete lattice
which we denote by $\Lambda(\vv V)$. If $\vv Q$ is a quasivariety that is not a variety then $\Lambda(\vv Q)$ is still a lattice but it is not necessarily complete (in particular does not need to have maximum). The class of subquasivarieties of a quasivariety $\vv Q$  is a complete lattice denoted by
$\Lambda_q(\vv Q)$.

For the definition of free algebras in a class $\vv K$ on a set $X$ of generators in symbols $\alg F_\vv K(X)$, we refer again to \cite{BurrisSanka}. We merely observe that every free algebra on a class $\vv K$ belongs to
$\II\SU\PP(\vv K)$. It follows that every free algebra in $\vv K$ is free in $\II\SU\PP(\vv K)$ and therefore for any quasivariety $\vv Q$, $\alg F_\vv Q(X) = \alg F_{\VV(\vv Q)}(X)$.

Let $\alg B, (\alg A_i)_{i \in I}$ be algebras in the same signature; we say that $\alg B$ {\bf embeds} in $\prod_{i \in I} \alg A_i$  if $\alg B \in \II\SU(\prod_{i\in I} \alg A_i)$.
Let $p_i$ be the $i$-th projection, or better, the composition of the isomorphism and the $i$-th projection, from $\alg B$ to $\alg A_i$; the embedding is
{\bf subdirect} if for all $i \in I$, $p_i(\alg B) = \alg A_i$ and in this case we will write
$$
\alg B \le_{sd} \prod_{i \in I} \alg A_i.
$$
An algebra $\alg B$ is {\bf subdirectly irreducible} if it is nontrivial and for any subdirect embedding
$$
\alg B \le_{sd} \prod_{i \in I} \alg A_i.
$$
there is an $i \in I$ such that $\alg B$ and $\alg A_i$ are isomorphic.  It can be shown that $\alg A$ is {\bf subdirectly irreducible} if and only if the congruence lattice $\Con A$ of  $\alg A$ has a unique minimal element different from the trivial congruence. If $\vv V$ is a variety we denote by $\vv V_{si}$ the class of subdirectly irreducible algebras in $\vv V$.

\begin{theorem}\label{birkhoff} \begin{enumerate}
 \item (Birkhoff \cite{Birkhoff1944}) Every algebra can be subdirectly embedded in a product of subdirectly irreducible algebras. So if $\alg A \in \vv V$, then $\alg A$ can be subdirectly embedded in a product of members of $\vv V_{si}$.
\item (J\'onsson's Lemma \cite{Jonsson1967}) Suppose that $\vv K$ is a class of algebras such that $\vv V(\vv K)$ is congruence distributive;
then $\vv V_{si} \sse \HH\SU\PP_u(\vv K)$.
\end{enumerate}
\end{theorem}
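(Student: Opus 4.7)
For (1), the plan is Birkhoff's classical argument. Given an algebra $\alg A$ and any $a \neq b$ in $A$, Zorn's Lemma yields a congruence $\theta_{a,b} \in \Con A$ maximal among those avoiding $(a,b)$. Any congruence strictly above $\theta_{a,b}$ must contain $(a,b)$ and therefore must contain the principal congruence $\operatorname{Cg}_{\alg A}(a,b)$; it follows that $\theta_{a,b} \vee \operatorname{Cg}_{\alg A}(a,b)$ is the unique atom above $\theta_{a,b}$ in $\Con A$, so $\theta_{a,b}$ is completely meet-irreducible and $\alg A/\theta_{a,b}$ is subdirectly irreducible. Since $\bigcap_{a \neq b} \theta_{a,b} = \Delta_A$, the diagonal map exhibits $\alg A$ as a subdirect product of the quotients $\alg A/\theta_{a,b}$; if $\alg A \in \vv V$, each quotient lies in $\vv V_{si}$.

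For (2), fix a subdirectly irreducible $\alg A$ in $\vv V(\vv K) = \HSP(\vv K)$ and realize it as $\alg A \cong \alg E/\psi$ with $\alg E \leq \prod_{i \in I} \alg C_i$, each $\alg C_i \in \vv K$. Subdirect irreducibility makes $\psi$ completely meet-irreducible in $\Con E$, with a unique upper cover $\psi^\ast$. For each $J \sse I$, let $\phi_J \in \Con E$ denote the congruence of pairs $(x,y) \in E^2$ with $x_j = y_j$ for every $j \in J$; the elementary identity $\phi_{J_1 \cup J_2} = \phi_{J_1} \meet \phi_{J_2}$ will be used repeatedly.

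The goal is to produce an ultrafilter $\mathcal{U}$ on $I$ such that $\phi_J \sse \psi$ for every $J \in \mathcal{U}$: once this is achieved, the composite $\alg E \to \prod_{i \in I} \alg C_i \to \prod_{i \in I} \alg C_i/\mathcal{U}$ has kernel $\bigcup_{J \in \mathcal{U}} \phi_J \sse \psi$, and quotienting exhibits $\alg A = \alg E/\psi$ as a homomorphic image of a subalgebra of the ultraproduct, giving $\alg A \in \HH\SU\PP_u(\vv K)$. The natural candidate is $\mathcal{U} := \{J \sse I : \phi_J \sse \psi\}$: it is trivially upward closed, contains $I$, and (because $\alg A$ is nontrivial) omits $\emptyset$. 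The main obstacle---and the sole place where the congruence-distributivity hypothesis enters---is verifying that $\mathcal{U}$ is an ultrafilter. Primality falls out cleanly: since $\phi_J \meet \phi_{I \setminus J} = \phi_I = \Delta_E$, the distributive identity in $\Con E$ forces $(\phi_J \vee \psi) \meet (\phi_{I \setminus J} \vee \psi) = \psi$, so if neither $J$ nor $I \setminus J$ lay in $\mathcal{U}$, complete meet-irreducibility of $\psi$ would demand that both joins contain $\psi^\ast$, yielding $\psi \supseteq \psi^\ast$, a contradiction. I expect the subtler step to be closure under binary intersection: one must again invoke distributivity in $\Con E$ together with the covering property of $\psi^\ast$ to propagate $\phi_{J_1}, \phi_{J_2} \sse \psi$ to $\phi_{J_1 \cap J_2} \sse \psi$, despite the possibly strict inclusion $\phi_{J_1} \vee \phi_{J_2} \sse \phi_{J_1 \cap J_2}$ that can occur on the subalgebra $\alg E$.
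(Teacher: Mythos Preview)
The paper does not prove this theorem at all; it is stated with citations to Birkhoff and J\'onsson as background. So there is no ``paper's proof'' to compare against, and your proposal must be judged on its own.

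Part~(1) is the standard argument and is fine.

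Part~(2) has a genuine gap, and it is precisely the step you flag as subtle. Your candidate $\mathcal{U}=\{J\sse I:\phi_J\sse\psi\}$ need \emph{not} be closed under binary intersection, and no amount of distributivity together with the covering property of $\psi^\ast$ will force it to be. The obstruction you yourself name---the possibly strict inclusion $\phi_{J_1}\vee\phi_{J_2}\subsetneq\phi_{J_1\cap J_2}$ on the subalgebra $\alg E$---is exactly what blocks the argument: from $\phi_{J_1},\phi_{J_2}\sse\psi$ you get only $\phi_{J_1}\vee\phi_{J_2}\sse\psi$, and there is no lattice-theoretic route from there to $\phi_{J_1\cap J_2}\sse\psi$.

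The correct manoeuvre is not to prove that $\mathcal{U}$ is an ultrafilter, but to find an ultrafilter \emph{inside} it. Your distributivity computation actually yields the stronger statement
\[
J_1\cup J_2\in\mathcal{U}\ \Longrightarrow\ J_1\in\mathcal{U}\text{ or }J_2\in\mathcal{U},
\]
since $\phi_{J_1\cup J_2}=\phi_{J_1}\meet\phi_{J_2}$ gives $(\phi_{J_1}\vee\psi)\meet(\phi_{J_2}\vee\psi)=\phi_{J_1\cup J_2}\vee\psi=\psi$ whenever $J_1\cup J_2\in\mathcal{U}$. Contrapositively, the complement $\mathcal{I}=\{J:\phi_J\not\sse\psi\}$ is closed under binary unions; together with downward closure and $I\notin\mathcal{I}$ this makes $\mathcal{I}$ a proper ideal of $\mathcal P(I)$. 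Extend $\mathcal{I}$ to a maximal ideal; its complement is an ultrafilter $\mathcal{U}'\sse\mathcal{U}$, and your kernel computation then goes through verbatim with $\mathcal{U}'$ in place of $\mathcal{U}$. (Incidentally, once you know $\mathcal{I}$ is a proper ideal, your ``primality'' observation---that $J\in\mathcal{U}$ or $I\setminus J\in\mathcal{U}$---is automatic, since $J,I\setminus J\in\mathcal{I}$ would force $I\in\mathcal{I}$.)
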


If $\vv Q$ is a quasivariety and $\alg A \in \vv Q$, a {\bf $\vv Q$-congruence}  of $\alg A$ is a congruence $\th$ such that $\alg A/\th \in \vv Q$; clearly $\vv Q$-congruences
form an algebraic lattice $\op{Con}_\vv Q(\alg A)$. An algebra $\alg A\in \vv Q$ is {\bf $\vv Q$-irreducible} if $\op{Con}_\vv Q(\alg A)$ has a unique minimal element; since clearly $\op{Con}_\vv Q(\alg A)$ is a
meet subsemilattice of $\Con A$, any subdirectly irreducible algebra is $\vv Q$-irreducible in any quasivariety $\vv Q$ to which it belongs. For a quasivariety $\vv Q$ we denote by
$\vv Q_{ir}$ the class of $\vv Q$-irreducible algebras in $\vv Q$.
We have the equivalent of Birkhoff's and J\'onsson's results for quasivarieties:

\begin{theorem}\label{quasivariety} Let $\vv Q$ be any quasivariety.
\begin{enumerate}
\item (Mal'cev \cite{Malcev1956}) Every $\alg A \in \vv Q$ is subdirectly embeddable in a product of algebras  in $\vv Q_{ir}$.
\item (Czelakowski-Dziobiak \cite{CzelakowskiDziobiak1990}) If $\vv Q = \QQ(\vv K)$, then $\vv Q_{ir} \sse \II\SU\PP_u(\vv K)$.
\end{enumerate}
\end{theorem}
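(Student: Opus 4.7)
The plan is to establish part (1) by mimicking Birkhoff's subdirect representation theorem (Theorem~\ref{birkhoff}(1)), with $\vv Q$-congruences replacing ordinary congruences, and to establish part (2) by unwinding $\QQ(\vv K) = \II\SU\PP\PP_u(\vv K)$ from Lemma~\ref{lemma:ISP}(2) and then using $\vv Q$-irreducibility to single out one factor from the resulting subdirect representation.

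For part (1), fix $\alg A \in \vv Q$ and, for each pair $a \ne b$ in $A$, consider the poset $\mathcal{S}_{a,b}$ of all $\vv Q$-congruences of $\alg A$ that do not identify $a$ and $b$. The key background fact is that $\op{Con}_\vv Q(\alg A)$ is algebraic---a standard feature of quasivarieties, since the union of an up-directed family of $\vv Q$-congruences is again a $\vv Q$-congruence---so $\mathcal{S}_{a,b}$ is inductively ordered and Zorn's Lemma produces a maximal element $\th_{a,b}$. One then checks that $\alg A/\th_{a,b}$ is $\vv Q$-irreducible: by maximality, every $\vv Q$-congruence of $\alg A$ strictly above $\th_{a,b}$ must identify $a$ and $b$, and the meet in $\op{Con}_\vv Q(\alg A)$ (which coincides with set-theoretic intersection) of all such strict upper covers still identifies $a$ and $b$ and is therefore itself strictly above $\th_{a,b}$, providing the required unique minimum nontrivial $\vv Q$-congruence on the quotient. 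Since $\bigcap_{a \ne b} \th_{a,b} = \D_A$ by construction, the induced map $\alg A \le_{sd} \prod_{a \ne b} \alg A/\th_{a,b}$ is the desired subdirect embedding into $\vv Q$-irreducibles.

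For part (2), let $\alg A \in \vv Q_{ir}$ with unique minimum nontrivial $\vv Q$-congruence $\mu$. By Lemma~\ref{lemma:ISP}(2), there is an embedding $\iota \colon \alg A \hookrightarrow \prod_{i \in I} \alg B_i$ with $\alg B_i \in \PP_u(\vv K)$ for every $i$; write $\sigma_i$ for the composition of $\iota$ with the $i$-th projection and set $\th_i = \ker \sigma_i$. Each $\th_i$ is a $\vv Q$-congruence because $\alg A/\th_i$ embeds in $\alg B_i \in \vv Q$, and $\bigcap_i \th_i = \D_A$ since $\iota$ is injective. If every $\th_i$ were strictly above $\D_A$, then $\vv Q$-irreducibility would force $\th_i \supseteq \mu$ for all $i$, giving $\D_A = \bigcap_i \th_i \supseteq \mu \ne \D_A$, a contradiction; hence some $\th_{i_0}$ equals $\D_A$, so $\sigma_{i_0}$ embeds $\alg A$ into $\alg B_{i_0} \in \PP_u(\vv K)$, i.e.\ $\alg A \in \II\SU\PP_u(\vv K)$. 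The only somewhat delicate ingredient in this program is the algebraicity of $\op{Con}_\vv Q(\alg A)$ invoked in part (1); everything else is a direct translation of the classical Birkhoff and J\'onsson arguments, with $\vv Q$-irreducibility doing the work of both subdirect irreducibility and congruence distributivity.
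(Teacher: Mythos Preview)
Your argument is correct and follows the classical route. Note, however, that the paper does not supply its own proof of this theorem: it is stated as a background result with attributions to Mal'cev and to Czelakowski--Dziobiak, and no argument appears in the text. So there is no paper proof to compare against.

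A couple of minor points of phrasing. In part~(1) you write ``strict upper covers'' where you mean ``$\vv Q$-congruences strictly above $\th_{a,b}$''; covers in the lattice-theoretic sense are not what is being intersected. Also, the claim that the meet in $\op{Con}_\vv Q(\alg A)$ coincides with set-theoretic intersection deserves one line of justification (the quotient by an intersection embeds into the product of the quotients, and $\vv Q$ is closed under $\SU\PP$), as does the correspondence between $\op{Con}_\vv Q(\alg A/\th_{a,b})$ and the interval above $\th_{a,b}$ in $\op{Con}_\vv Q(\alg A)$. Your closing remark is apt: the reason part~(2) avoids the congruence-distributivity hypothesis of J\'onsson's Lemma is precisely that $\vv Q$-irreducibility already guarantees a monolith among $\vv Q$-congruences, so no lattice-theoretic argument is needed to isolate a single factor.
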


\subsection{The Blok-Pigozzi connection}

In what follows by a {\bf logic} $\mathcal{L}$  we mean a substitution invariant consequence relation $\vdash_\mathcal{L}$ on the set of terms $\alg T_{\tau}(\omega)$ (also called \emph{algebra of formulas}) of some algebraic language $\tau$.
A {\bf clause} in $\mathcal{L}$ is a formal expression $\Sigma \Rightarrow \Delta$ where $\Sigma,\Delta$ are finite set of formulas of $\mathcal L$; a clause is a {\bf rule} if $\Delta =\{\d\}$. A rule is an {\bf axiom} if $\Sigma = \emptyset$.

If the equivalence between formulas (i.e. provable equivalence according to $\vdash$) is a congruence on the algebra of formulas, then we can form the {\em Lindenbaum-Tarski algebra} as the quotient of provably equivalent formulas of the algebra of formulas. If the quasivariety $\vv Q_{\mathcal{L}}$ generated by the Lindembaum-Tarski algebra satisfies further conditions, then it is the class of algebraic models of $\mathcal{L}$ and $\mathcal{}$ is {\bf algebraizable} with {\bf equivalent algebraic semantics} $\vv Q_{\mathcal{L}}$; Blok and Pigozzi \cite{BlokPigozzi1989} formalized this connection and gave necessary and sufficient conditions for a logic to be algebraizable.  Essentially, one needs a finite set of equations $\tau =\{\d_i \app \e_i: i = 1,\dots,n\}$ in the language of $\vv Q_{\mathcal{L}}$  and a finite set of formulas of $\mathcal L$
in two variables $\Delta(x,y)=\{\f_1(x,y),\dots,\f_m(x,y)\}$ that allow to transform equations, quasiequations and universal sentences in $\vv Q_{\mathcal{L}}$ into formulas, rules and clauses of $\mathcal{L}$ and viceversa; moreover this transformation must respect both the consequence relation and the semantical consequence. That is to say, for all sets of formulas $\Gamma$ of $\cc L$ and formulas $\f \in \alg T_{\tau}(\omega)$
$$
\Gamma \vdash \f\quad\text{if and only if}\quad \{\d_i(\Gamma) \app \e_i(\Gamma): i=1,\dots,n\} \vDash_{\vv Q_{\cc L}} \{\d_i(\f) \app \e_i(\f): i=1,\dots,n\}
$$
where of course $\d_i(\Gamma) \app \e_i(\Gamma)$ is a shorthand for   $\d_i(\psi) \app \e_i(\psi)$ for all $\psi \in \Gamma$ and also
$$
\vv Q_{\cc L} \vDash (x \app y) \Leftrightarrow (\bigwedge_{i=1}^n \bigwedge_{j=1}^m (\d_i(\f_j(x,y)) \app \e_i(\f_j(x,y))).
$$
A quasivariety $\vv Q$ is a {\em quasivariety of logic} if it is the equivalent quasivariety semantics for some logic $\mathsf{L}_\vv Q$; the Galois connection between algebraizable logics and quasivarieties of logic is given by
$$
\mathcal{L}_{\vv Q_{\mathcal{L}}} = \mathcal{L} \qquad\qquad \vv Q_{\mathcal{L}_\vv Q} = \vv Q.
$$

\subsection{Structural completeness in algebra and logic}

An {\bf extension} of $\mathcal{L}$ over the language $\tau$ is a logic $\mathcal{L}'$ over the same language such that $\Sigma \vdash_\mathcal{L} \d$ implies $\Sigma \vdash_{\mathcal{L}'} \d$. A {\bf finitary extension} of $\mathcal{L}$ is an extension of $\mathcal{L}$  obtained by adding a set of rules to the calculus of $\mathcal{L}$; an {\bf axiomatic extension} of $\mathcal{L}$ is an extension of $\mathcal{L}$ obtained by adding a set of axioms to the calculus of $\mathcal{L}$. The class of the (finitary, axiomatic) extension of the logic $\mathcal{L}$ is never empty, since $\mathcal{L}$ clearly belongs to it.
Therefore we can define closure operators in a standard way and hence complete lattices; so $\op{Th}(\mathcal{L})$, $\op{Th}_{f}(\mathcal {L})$ and $\op{Th}_a(\mathcal{L})$ will be the lattice of all the extensions, finitary extensions and axiomatic extensions of $\mathcal{L}$ respectively.
Via the Blok-Pigozzi connection we get:

\begin{theorem}\label{main1} Let $\mathcal{L}$ be an algebraizable logic with equivalent algebraic semantics $\vv Q_\mathcal {L}$. Then
\begin{enumerate}
\item  the lattice $\op{Th}_a(\mathcal L)$ of the axiomatic extensions of $\mathcal{L}$ is dually isomorphic with $\Lambda(\vv Q)$;
\item the lattice $\op{Th}_f(\mathcal L)$ of the finitary extensions of $\mathcal{L}$ is dually isomorphic with $\Lambda_q(\vv Q)$.
\end{enumerate}
\end{theorem}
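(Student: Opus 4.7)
The plan is to exhibit an order-reversing bijection in each case by transporting axioms/rules through the Blok-Pigozzi translations. Define $\Phi_a: \op{Th}_a(\mathcal{L}) \to \Lambda(\vv Q)$ as follows: given an axiomatic extension $\mathcal{L}'$ of $\mathcal{L}$ obtained by adding a set $\mathsf{Ax}$ of axioms, let $\Phi_a(\mathcal{L}')$ be the subvariety of $\vv Q_\mathcal{L}$ axiomatized (relative to $\vv Q_\mathcal{L}$) by the equation set $\{\d_i(\f) \app \e_i(\f) : \f \in \mathsf{Ax},\ i=1,\dots,n\}$. Conversely, define $\Psi_a: \Lambda(\vv Q) \to \op{Th}_a(\mathcal{L})$ by sending a subvariety $\vv V$ presented (relative to $\vv Q$) by an equation set $\mathsf{E}$ to the axiomatic extension of $\mathcal{L}$ whose additional axioms are $\{\f_j(p,q) : (p \app q) \in \mathsf{E},\ j=1,\dots,m\}$. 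For the finitary case I do the same, but now additional rules $\Sigma \Rightarrow \d$ translate into quasiequations $\{\d_i(\psi) \app \e_i(\psi): \psi\in\Sigma,\ i\le n\} \Rightarrow \{\d_i(\d) \app \e_i(\d): i \le n\}$, and relative subquasivariety presentations by quasiequations translate back via $\Delta$ into rules; this will give $\Phi_f$ and $\Psi_f$.

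The key step is verifying $\Phi_a \circ \Psi_a = \op{id}$ and $\Psi_a \circ \Phi_a = \op{id}$ (and similarly for $f$). Both rely on the defining algebraizability identity
\[
\vv Q_\mathcal{L} \vDash (x \app y) \Leftrightarrow \bigwedge_{i=1}^n \bigwedge_{j=1}^m \bigl(\d_i(\f_j(x,y)) \app \e_i(\f_j(x,y))\bigr),
\]
which lets one show that translating an equation to a set of formulas and back yields a set equivalent (over $\vv Q_\mathcal{L}$) to the original equation, and dually for formulas. Since the transformations also respect the consequence relation on the nose, a rule $\Sigma \vdash_{\mathcal{L}'} \d$ corresponds exactly to the quasiequation $\{\d_i(\Sigma)\app \e_i(\Sigma)\} \vDash_{\vv Q_{\mathcal{L}'}} \{\d_i(\d)\app \e_i(\d)\}$, so the two maps are well-defined (independent of the presenting set) and mutually inverse.

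Order-reversal comes for free from the direction of extension: if $\mathcal{L}' \le \mathcal{L}''$ in $\op{Th}_a(\mathcal{L})$ or $\op{Th}_f(\mathcal{L})$ (meaning $\mathcal{L}''$ proves everything $\mathcal{L}'$ does), then every axiom (resp.\ rule) defining $\mathcal{L}'$ over $\mathcal{L}$ is derivable in $\mathcal{L}''$, so by algebraizability its translation into equations (resp.\ quasiequations) is valid on $\vv Q_{\mathcal{L}''}$, yielding $\vv Q_{\mathcal{L}''} \sse \vv Q_{\mathcal{L}'}$, i.e.\ $\Phi(\mathcal{L}'') \le \Phi(\mathcal{L}')$ in the subvariety (resp.\ subquasivariety) lattice. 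The converse inclusion is symmetric, using $\Psi$.

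The main obstacle, and the place where part (1) diverges from part (2), is checking that $\Phi_a$ actually lands in $\Lambda(\vv Q)$ rather than merely in $\Lambda_q(\vv Q)$, and dually that $\Psi_a$ takes subvarieties to axiomatic extensions (not merely finitary ones). This is where one must use the specific feature that an \emph{axiomatic} extension has no hypotheses on the left of $\Rightarrow$, so the translated universal sentences have empty premise and are equations; conversely, a subvariety is cut out by equations, whose back-translation via $\Delta$ produces axioms (empty left-hand side) rather than genuine rules. Once this bookkeeping is in place, the same argument in the broader setting of finitary extensions and quasiequations yields part (2), completing the proof.
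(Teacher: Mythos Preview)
The paper does not give a proof of this theorem; it is stated as a direct consequence of the Blok--Pigozzi machinery just described (the original reference is \cite{BlokPigozzi1989}, essentially their Theorem~4.9), so there is no ``paper's own proof'' to compare against. Your sketch is the standard argument behind that result and is essentially correct.

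Two small points are worth tightening. First, a rule $\Sigma \Rightarrow \d$ does not translate into a single quasiequation whose conclusion is the set $\{\d_i(\d)\app\e_i(\d): i\le n\}$: a quasiequation has exactly one equation on the right, so the translation yields $n$ quasiequations, one for each $i$. This is cosmetic but should be written correctly. Second, for part~(1) in full generality (when $\vv Q_{\mathcal L}$ is a quasivariety that is not a variety) your map $\Phi_a$ produces a subquasivariety axiomatized by equations \emph{relative to} $\vv Q_{\mathcal L}$; such a class need not be a variety in the absolute sense, so it does not obviously land in $\Lambda(\vv Q)$ as the paper defines it (the lattice of subvarieties). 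The correspondence in the general Blok--Pigozzi theorem is really with \emph{relative} subvarieties. In the paper's applications $\vv Q_{\mathcal L}$ is always a variety, so the distinction disappears, but if you want the statement at the level of generality at which it is phrased you should either add that hypothesis or note the relative interpretation of $\Lambda(\vv Q)$.
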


An algebraizable logic $\mathcal L$ is {\bf tabular} if it is the logic of a finite algebra; in other words $\vv Q_\mathcal L$ is a finitely generated quasivariety, i.e. $\vv Q_\mathcal L = \QQ(\alg A)$ for some finite algebra $\alg A$. A logic is {\bf locally tabular} if, for any finite $k$, there exist only finitely many
pairwise nonequivalent formulas in $\mathcal L$ built from the variables $\vuc xk$. It is clear that an algebraizable logic is locally tabular if and only if
$\vv Q_\mathcal L$ is a locally finite quasivariety.

A clause $\Sigma \Rightarrow \Delta$ is {\bf admissible} in $\mathcal{L}$ if every substitution that makes the premises into a theorem of $\mathcal L$, also makes at least one of the conclusions in $\Delta$ a theorem of $\mathcal{L}$. In particular a rule is admissible in $\mathcal{L}$  if, when added to its calculus, it does not produce any new theorem. A clause $\Sigma \Rightarrow \Delta$ is {\bf derivable} in $\mathcal{L}$ if $\Sigma \vdash \d$ for some $\d \in \Delta$.
A logic $\mathcal {L}$ is {\bf structurally complete} if every admissible rule of $\mathcal{L}$ is derivable in $\mathcal{L}$; a logic is {\bf hereditarily structurally complete} if every finitary extension of $\mathcal{L}$ is structurally complete. Determining structural completeness of a logic is in general a very deep and challenging problem; here we will use only the parts of the theory that are necessary but for an extensive treatment of the subject we direct the reader to \cite{AglianoUgolini2023}.

A quasivariety $\vv Q$ is {\bf structural} if for every subvariety $\vv Q'\sse \vv Q$, $\HH(\vv Q') = \HH(\vv Q)$ implies $\vv Q'= \vv Q$.

\begin{theorem} \label{structural} \cite{Bergman1991}For a quasivariety $\vv Q$ the following are equivalent:
\begin{enumerate}
\item $\vv Q$ is structural;
\item $\QQ(\alg F_\vv Q(\o) = \vv Q$.
\end{enumerate}
\end{theorem}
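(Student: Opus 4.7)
The plan is to leverage two facts: first, the identity $\alg F_\vv Q(\o) = \alg F_{\VV(\vv Q)}(\o)$ already stated in the excerpt; and second, the standard observation that for any quasivariety $\vv Q$, since $\vv Q$ is closed under $\SU$ and $\PP$, we have $\HH(\vv Q) = \HH\SU\PP(\vv Q) = \VV(\vv Q)$. Consequently, for any subquasivariety $\vv Q' \sse \vv Q$, the hypothesis $\HH(\vv Q') = \HH(\vv Q)$ is the same as $\VV(\vv Q') = \VV(\vv Q)$. I will use this reformulation throughout (reading the word ``subvariety'' in the definition of \emph{structural} as \emph{subquasivariety}, which is Bergman's original formulation).

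For the direction (1) $\Rightarrow$ (2), I set $\vv Q' = \QQ(\alg F_\vv Q(\o))$. Since $\alg F_\vv Q(\o)\in\vv Q$, this is a subquasivariety of $\vv Q$, so the inclusion $\vv Q' \sse \vv Q$ is free. To invoke structurality it suffices to verify $\VV(\vv Q') = \VV(\vv Q)$. One inclusion is automatic; for the other I use that $\alg F_\vv Q(\o) = \alg F_{\VV(\vv Q)}(\o)$, together with the standard fact that the countably generated free algebra of a variety (in a countable signature) generates that variety, giving
\[
\VV(\vv Q') \supseteq \VV(\alg F_\vv Q(\o)) = \VV(\alg F_{\VV(\vv Q)}(\o)) = \VV(\vv Q).
\]
Then structurality of $\vv Q$ forces $\vv Q' = \vv Q$, which is exactly (2).

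For the converse (2) $\Rightarrow$ (1), I assume $\QQ(\alg F_\vv Q(\o)) = \vv Q$ and take any subquasivariety $\vv Q' \sse \vv Q$ with $\HH(\vv Q') = \HH(\vv Q)$, equivalently $\VV(\vv Q') = \VV(\vv Q)$. Applying the free-algebra identity on both sides gives
\[
\alg F_{\vv Q'}(\o) = \alg F_{\VV(\vv Q')}(\o) = \alg F_{\VV(\vv Q)}(\o) = \alg F_\vv Q(\o).
\]
In particular $\alg F_\vv Q(\o) \in \vv Q'$, hence $\vv Q = \QQ(\alg F_\vv Q(\o)) \sse \vv Q'$, and equality follows.

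The main potential obstacle is the step $\VV(\alg F_\vv Q(\o)) = \VV(\vv Q)$, which rests on the fact that a variety of countable signature is generated by its free algebra on $\o$ generators; I would simply cite this from Burris--Sankappanavar rather than reprove it. Everything else is essentially bookkeeping with the closure operators and the free-algebra coincidence already granted in the preliminaries.
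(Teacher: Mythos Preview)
Your argument is correct and is essentially the standard proof of this result. Note, however, that the paper does not supply its own proof of this theorem: it is stated with a citation to Bergman and no argument is given, so there is nothing in the paper to compare your approach against.

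One small remark: you are right to read ``subvariety'' in the paper's definition of \emph{structural} as ``subquasivariety''; with the literal reading the implication (1) $\Rightarrow$ (2) would not go through, since $\QQ(\alg F_\vv Q(\o))$ need not be a variety. The paper's subsequent use of the notion (e.g.\ in the corollary on structural cores) confirms that the intended quantification is over subquasivarieties, so your correction is the appropriate one.
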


For any quasivariety $\vv Q$, we define the {\bf structural core of  $\vv Q$}   as the smallest $\vv Q'\sse \vv Q$ such that $\HH(\vv Q) = \HH(\vv Q')$. The structural core
of a quasivariety always exists:

\begin{corollary} For any quasivariety $\vv Q$, $\QQ(\vv F_\vv Q(\o))$ is structural and  it is the structural core of  $\vv Q$.
\end{corollary}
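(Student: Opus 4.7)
Set $\vv R := \QQ(\alg F_\vv Q(\o))$, which is a subquasivariety of $\vv Q$. The plan is to verify three things in sequence: first, that $\vv R$ is structural (via the characterization given by the preceding theorem of Bergman); second, that $\HH(\vv R) = \HH(\vv Q)$; and third, that $\vv R$ is contained in every subquasivariety $\vv Q' \sse \vv Q$ satisfying $\HH(\vv Q') = \HH(\vv Q)$.

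Two elementary facts underpin the argument. Since any quasivariety is closed under $\SU$ and $\PP$, we have $\HH(\vv K) = \HH\SU\PP(\vv K) = \VV(\vv K)$ for every quasivariety $\vv K$; hence, restricted to subquasivarieties, the condition $\HH(\vv Q') = \HH(\vv Q)$ becomes the varietal equality $\VV(\vv Q') = \VV(\vv Q)$. In addition, $\alg F_\vv Q(\o)$ satisfies exactly the equations valid throughout $\vv Q$, so $\VV(\alg F_\vv Q(\o)) = \VV(\vv Q)$; more generally, as recorded in the excerpt, $\alg F_\vv K(\o) = \alg F_{\VV(\vv K)}(\o)$ for every quasivariety $\vv K$.

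For the first step, by the preceding theorem I need $\QQ(\alg F_\vv R(\o)) = \vv R$. Now $\alg F_\vv Q(\o)$ lies in $\vv R \sse \vv Q$ and carries the universal property of the free algebra on $\o$ generators over the larger class $\vv Q$; the same universal property is therefore inherited over the smaller class $\vv R$, and by uniqueness of free algebras $\alg F_\vv R(\o) = \alg F_\vv Q(\o)$. Consequently $\QQ(\alg F_\vv R(\o)) = \QQ(\alg F_\vv Q(\o)) = \vv R$. The second step is then immediate from the two standing observations: $\HH(\vv R) = \VV(\vv R) = \VV(\alg F_\vv Q(\o)) = \VV(\vv Q) = \HH(\vv Q)$.

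For minimality, let $\vv Q'$ be a subquasivariety of $\vv Q$ with $\HH(\vv Q') = \HH(\vv Q)$. Then $\VV(\vv Q') = \VV(\vv Q)$, so $\alg F_{\vv Q'}(\o) = \alg F_{\VV(\vv Q')}(\o) = \alg F_{\VV(\vv Q)}(\o) = \alg F_\vv Q(\o)$; since $\alg F_{\vv Q'}(\o) \in \vv Q'$, we conclude $\alg F_\vv Q(\o) \in \vv Q'$ and hence $\vv R = \QQ(\alg F_\vv Q(\o)) \sse \vv Q'$. I do not anticipate any serious obstacle in the proof: the only interpretive point is that ``smallest $\vv Q' \sse \vv Q$'' in the definition of the structural core must be read inside the lattice $\Lambda_q(\vv Q)$ of subquasivarieties, as the whole discussion surrounding the definition takes place within that lattice.
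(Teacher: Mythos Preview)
Your proof is correct and follows essentially the same approach as the paper's: structurality via Bergman's characterization (using $\alg F_{\vv R}(\o)=\alg F_{\vv Q}(\o)$), and minimality via the observation that $\HH(\vv Q')=\HH(\vv Q)$ forces $\alg F_{\vv Q}(\o)\in\vv Q'$. You simply spell out the details (in particular the step $\HH(\vv R)=\HH(\vv Q)$ and the identification of free algebras through $\VV(\vv Q')=\VV(\vv Q)$) that the paper leaves implicit in its two-line argument.
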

\begin{proof} $\QQ(\vv F_\vv Q(\o))$ is structural by Theorem \ref{structural}; if $\vv Q' \sse \vv Q$ is such that $\HH(\vv Q') = \HH(\vv Q)$, then clearly $\alg F_\vv Q(\o) \in \vv Q'$ from which the thesis follows.
\end{proof}

It follows at once that a quasivariety $\vv Q$ is structural if and only if it coincides with its structural core. As a consequence the structural subquasivarieties of a quasivariety $\vv Q$ are exactly those that coincide with the structural cores of  $\vv Q'$ for some $\vv Q'\sse \vv Q$; even more, since $\HH(\vv Q)$ is a variety, the structural subquasivarieties of a variety $\vv V$ are exactly the structural cores of $\vv V'$ for some subvariety $\vv V'$ of $\vv V$. As we will see in the sequel this observation is particularly useful when the free countably generated algebra in $\vv V$ has a reasonable description.

If $\vv Q$ is a quasivariety and $\vv Q'$ is a subquasivariety of $\vv Q$ we say that $\vv Q'$ is {\bf equational} in $\vv Q$ if $\vv Q' = \HH(\vv Q') \cap \vv Q$; this is clearly equivalent to saying that $\vv Q'$ is axiomatized modulo $\vv Q$ by a set of equations. A quasivariety $\vv Q$ is {\bf primitive} if each subquasivariety of $\vv Q'$ is equational in $\vv Q$. The following lemma is straightforward:

\begin{lemma} For a quasivariety $\vv Q$ the following are equivalent;
\begin{enumerate}
\item $\vv Q$ is primitive;
\item every subquasivariety of $\vv Q$ is structural.
\end{enumerate}
\end{lemma}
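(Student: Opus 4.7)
The plan is to verify both implications directly from the two definitions, relying only on the facts that $\HH$ is idempotent and monotone on classes and that $\HH(\vv Q')$ is (the) variety generated by any quasivariety $\vv Q'$, so that $\HH(\vv Q') \cap \vv Q$ is again a subquasivariety of $\vv Q$. In fact the whole argument pivots on recognising $\HH(\vv Q')\cap\vv Q$ as the largest subquasivariety of $\vv Q$ that generates the same variety as $\vv Q'$.

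For the direction (1)$\Rightarrow$(2), assume $\vv Q$ is primitive, let $\vv Q'\sse\vv Q$ be a subquasivariety, and take any subquasivariety $\vv Q''\sse\vv Q'$ with $\HH(\vv Q'')=\HH(\vv Q')$. Since $\vv Q''\sse\vv Q'\sse\vv Q$, both are subquasivarieties of $\vv Q$, so primitivity yields $\vv Q'=\HH(\vv Q')\cap\vv Q$ and $\vv Q''=\HH(\vv Q'')\cap\vv Q$; the equality $\HH(\vv Q')=\HH(\vv Q'')$ then forces $\vv Q'=\vv Q''$, which is exactly the structurality of $\vv Q'$.

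For (2)$\Rightarrow$(1), assume every subquasivariety of $\vv Q$ is structural and take an arbitrary subquasivariety $\vv Q'\sse\vv Q$. Set $\vv Q'':=\HH(\vv Q')\cap\vv Q$, which is a subquasivariety of $\vv Q$ containing $\vv Q'$. Monotonicity gives $\HH(\vv Q')\sse\HH(\vv Q'')$, and since $\vv Q''\sse\HH(\vv Q')$, idempotence of $\HH$ gives $\HH(\vv Q'')\sse\HH(\vv Q')$; hence $\HH(\vv Q')=\HH(\vv Q'')$. Since $\vv Q''$ is structural by hypothesis and $\vv Q'$ is a subquasivariety of $\vv Q''$ with the same $\HH$-image, we conclude $\vv Q'=\vv Q''=\HH(\vv Q')\cap\vv Q$, so $\vv Q'$ is equational in $\vv Q$; as $\vv Q'$ was arbitrary, $\vv Q$ is primitive. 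There is no real obstacle here: the argument is a short manipulation of the two definitions, and the only mildly non-routine step is spotting that the candidate $\HH(\vv Q')\cap\vv Q$ is the correct subquasivariety to which structurality should be applied.
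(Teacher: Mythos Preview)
Your proof is correct. The paper does not actually give a proof of this lemma: it is stated as ``straightforward'' and left to the reader, so your direct unpacking of the two definitions---using $\HH(\vv Q')\cap\vv Q$ as the maximal subquasivariety of $\vv Q$ with the same $\HH$-closure as $\vv Q'$---is exactly the kind of argument the authors had in mind.
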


For locally finite quasivarieties we have a necessary and sufficient condition due essentially to Gorbunov; an algebra $\alg A$ is {\bf weakly projective} in a class $\vv K$ if for all $\alg B \in \vv K$, $\alg A \in \HH(\alg B)$ implies $\alg A \in \II\SU(\alg B)$. An algebra is {\bf projective} in $\vv K$ if it is weakly projective in $\vv K$ and the epimorphism and the embedding witnessing weak projectivity compose to the identity on $\alg A$.

\begin{theorem}\label{maingorbunov}(see \cite{Gorbunov1998}) For a locally finite quasivariety $\vv Q$ the following are equivalent:
\begin{enumerate}
\item $\vv Q$ is primitive;
\item every finite $\vv Q$-irreducible algebra is weakly projective in $\vv Q$;
\item every finite $\vv Q$-irreducible algebra is weakly projective in the class of finite algebras in $\vv Q$.
\end{enumerate}
\end{theorem}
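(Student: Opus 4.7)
The plan is to prove the cycle of implications $(2)\Rightarrow(3)$, $(3)\Rightarrow(2)$, $(1)\Rightarrow(3)$, and $(3)\Rightarrow(1)$, which jointly yield the three equivalences. The direction $(2)\Rightarrow(3)$ is immediate from the definitions, because weak projectivity in $\vv Q$ trivially restricts to weak projectivity in any subclass. For $(3)\Rightarrow(2)$, given a finite $\vv Q$-irreducible $\alg A$ and any $\alg B\in\vv Q$ with a surjection $h\colon\alg B\twoheadrightarrow\alg A$, I would pick preimages $b_1,\dots,b_n\in\alg B$ of a generating set of $\alg A$ and set $\alg B_0=\la b_1,\dots,b_n\ra$. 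Local finiteness of $\vv Q$ forces $\alg B_0$ to be finite, $h$ still surjects $\alg B_0\twoheadrightarrow\alg A$, and $(3)$ yields $\alg A\in\II\SU(\alg B_0)\sse\II\SU(\alg B)$.

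For $(1)\Rightarrow(3)$, start from a finite $\vv Q$-irreducible $\alg A$ and a finite $\alg B\in\vv Q$ with $\alg A\in\HH(\alg B)$, and put $\vv Q'=\QQ(\alg B)$. Primitivity of $\vv Q$ gives $\vv Q'=\HH(\vv Q')\cap\vv Q$, and since $\alg A\in\HH(\alg B)\cap\vv Q\sse\HH(\vv Q')\cap\vv Q$, we conclude $\alg A\in\vv Q'$. The main technical wrinkle I expect here is transferring irreducibility: from $\vv Q'\sse\vv Q$ we have $\op{Con}_{\vv Q'}(\alg A)\sse\op{Con}_{\vv Q}(\alg A)$, so every nontrivial $\vv Q'$-congruence of $\alg A$ lies above the $\vv Q$-monolith $\mu$, and the intersection of all nontrivial $\vv Q'$-congruences is itself a $\vv Q'$-congruence because quotients by intersections of congruences embed into the corresponding product of quotients, which stays in $\vv Q'$. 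This intersection is therefore the $\vv Q'$-monolith, so $\alg A$ is $\vv Q'$-irreducible. Czelakowski--Dziobiak (Theorem \ref{quasivariety}(2)) applied to $\vv Q'=\QQ(\alg B)$ now places $\alg A\in\II\SU\PP_u(\alg B)$, and finiteness of $\alg B$ collapses ultrapowers to isomorphic copies, giving $\alg A\in\II\SU(\alg B)$.

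For $(3)\Rightarrow(1)$, take a subquasivariety $\vv Q'\sse\vv Q$ and $\alg A\in\HH(\vv Q')\cap\vv Q$; the goal is $\alg A\in\vv Q'$. A compactness argument -- any quasiequation that fails in $\alg A$ already fails on the subalgebra generated by a witnessing tuple, which is finite by local finiteness -- reduces the problem to finite $\alg A$, since such a finite subalgebra remains in $\HH(\vv Q')\cap\vv Q$ by pulling back the witnessing surjection to the preimage. For finite $\alg A$, Mal'cev's theorem (Theorem \ref{quasivariety}(1)) together with finiteness of $\op{Con}_{\vv Q}(\alg A)$ subdirectly embeds $\alg A$ into a finite product $\prod_i \alg A/\th_i$ of finite $\vv Q$-irreducible quotients, each of which lies in $\HH(\alg A)\sse\HH(\vv Q')$. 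Applying the local-finiteness lifting trick of $(3)\Rightarrow(2)$ to each $\alg A/\th_i$ produces a finite $\alg B_i\in\vv Q'$ with $\alg A/\th_i\in\HH(\alg B_i)$, so $(3)$ yields $\alg A/\th_i\in\II\SU(\alg B_i)\sse\vv Q'$; closure of $\vv Q'$ under products and subalgebras then forces $\alg A\in\vv Q'$, completing the cycle.
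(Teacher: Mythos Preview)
Your argument is correct. Note that the paper does not supply its own proof of this theorem: it is stated with a citation to Gorbunov's monograph and then used as a black box, so there is no ``paper's proof'' to compare against. What you have written is essentially the standard proof one finds in the literature, organised around Mal'cev's subdirect decomposition and the Czelakowski--Dziobiak lemma (Theorem~\ref{quasivariety}), together with the routine reductions to finite algebras that local finiteness affords.

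One small simplification in your $(1)\Rightarrow(3)$ step: once you have established $\alg A\in\vv Q'$, the passage from $\vv Q$-irreducibility to $\vv Q'$-irreducibility is immediate and does not require constructing the $\vv Q'$-monolith explicitly. Since $\vv Q'\sse\vv Q$ gives $\op{Con}_{\vv Q'}(\alg A)\sse\op{Con}_{\vv Q}(\alg A)$, any representation of the identity $\Delta$ as a meet of $\vv Q'$-congruences is already a representation as a meet of $\vv Q$-congruences, so complete meet-irreducibility of $\Delta$ in the larger lattice transfers directly. Your longer argument via the intersection of all nontrivial $\vv Q'$-congruences is of course also valid.
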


From the Blok-Pigozzi connection we get at once:

\begin{theorem} \label{main2} Let $\mathcal{L}$ be an algebraizable logic with equivalent algebraic semantics $\vv Q_\mathcal {L}$. Then
\begin{enumerate}
\item $\mathcal L$ is structurally complete if and only if $\vv Q_\mathcal L$ is structural;
\item $\mathcal L$ is hereditarily structurally complete if and only if $\vv Q_\mathcal L$ is primitive.
\end{enumerate}
\end{theorem}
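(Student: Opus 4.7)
The plan is to unpack the definitions of admissible and derivable rules via the Blok--Pigozzi connection, express them as statements about quasiequations valid in certain algebras of $\vv Q_\mathcal L$, and then invoke the algebraic criteria already proved in the excerpt. The central algebraic translation is the following pair of statements, which I would first make precise: a rule $\Sigma\Rightarrow\delta$ is derivable in $\mathcal L$ if and only if the corresponding quasiequation (obtained by applying $\tau$ to every formula) is valid in every member of $\vv Q_\mathcal L$; while the same rule is admissible in $\mathcal L$ if and only if that quasiequation is valid in the free countably generated algebra $\alg F_{\vv Q_\mathcal L}(\omega)$. The admissibility side is the delicate one: substitutions into the formula algebra $\alg T_\tau(\omega)$ are exactly endomorphisms of $\alg T_\tau(\omega)$, and a formula $\psi$ is a theorem of $\mathcal L$ precisely when the equations $\{\delta_i(\psi)\app\varepsilon_i(\psi)\}$ hold in $\vv Q_\mathcal L$, equivalently (since $\alg F_{\vv Q_\mathcal L}(\omega)$ is the Lindenbaum--Tarski algebra) in $\alg F_{\vv Q_\mathcal L}(\omega)$ itself.

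Assuming this translation, part (1) is immediate: $\mathcal L$ is structurally complete if and only if every quasiequation valid in $\alg F_{\vv Q_\mathcal L}(\omega)$ is already valid in the whole of $\vv Q_\mathcal L$, which by Lemma \ref{lemma:ISP}(2) amounts to $\QQ(\alg F_{\vv Q_\mathcal L}(\omega))=\vv Q_\mathcal L$, and this is exactly structurality of $\vv Q_\mathcal L$ by Theorem \ref{structural}. For part (2), I would combine (1) with Theorem \ref{main1}(2): the finitary extensions of $\mathcal L$ correspond dually to the subquasivarieties of $\vv Q_\mathcal L$, and each such extension $\mathcal L'$ is algebraized by the corresponding subquasivariety $\vv Q_{\mathcal L'}$. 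Applying (1) to each $\mathcal L'$, hereditary structural completeness of $\mathcal L$ becomes the statement that every subquasivariety of $\vv Q_\mathcal L$ is structural, which by the Lemma preceding Theorem \ref{maingorbunov} is precisely the primitivity of $\vv Q_\mathcal L$.

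The main obstacle is establishing the admissibility-to-free-algebra translation cleanly. Concretely, given an admissible rule, one must show that any assignment $h\colon \alg T_\tau(\omega)\to \alg F_{\vv Q_\mathcal L}(\omega)$ under which the translated premises $\{\delta_i(\psi)\app\varepsilon_i(\psi):\psi\in\Sigma\}$ hold arises, up to the quotient by provable equivalence, from a substitution $\sigma$ that turns every $\psi\in\Sigma$ into a theorem of $\mathcal L$; admissibility then forces $\sigma(\delta)$ to be a theorem, whence the translated conclusion holds under $h$. This step uses in an essential way that $\alg T_\tau(\omega)$ is absolutely free, so that values of $h$ on variables can be lifted to a substitution, and that the Blok--Pigozzi equivalence formulas $\Delta(x,y)$ let one pass back and forth between equality in $\alg F_{\vv Q_\mathcal L}(\omega)$ and provable equivalence of formulas. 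Once this bookkeeping is in place, the rest of the argument is a one-line application of the algebraic machinery already assembled in the preliminaries.
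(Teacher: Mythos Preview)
Your proposal is correct and is exactly the standard argument one gives for this result; the paper itself does not provide a proof, stating only that the theorem follows ``from the Blok-Pigozzi connection'' and the preceding algebraic lemmas. What you have written is precisely the way to unpack that remark, so there is nothing to compare.
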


Combining Theorems \ref{main1} and \ref{main2} it follows at once that if $\mathcal L$ is algebraizable and $\vv Q_\mathcal L$ is primitive, then every finitary extension of $\mathcal {L}$ is axiomatic.

Some investigations using (part of) the machinery we have described in this section has already been used to investigate structural completeness in (quasi)varieties of fuzzy logics (see for instance  \cite{OlsonRafVanAlten2008} or \cite{CintulaMetcalfe2009}). In this note however we will get more into
the details for a specific variety and this will allow us to characterize all the structurally complete finitary extensions of positive {\L}ukasiewicz logic.

\subsection{Hoops as quasivarieties of logics}

A commutative integral residuated lattice is an algebra\\ $\la A,\join,\meet, \cdot, \imp,1\ra$ such that
\begin{enumerate}
\item $\la A,\join,\meet, 1 \ra$ is a lattice with largest element $1$;
\item $\la A,\cdot,1\ra$ is a commutative monoid;
\item $(\cdot,\imp)$ form a residuated pair w.r.t. the lattice ordering, i.e. for all $a,b,c \in A$
$$
a \cdot b \le c\qquad\text{if and only if}\qquad a \le b \imp c.
$$
\end{enumerate}
In what follows, we will often write $xy$ for $x \cdot y$. If we augment the signature with an extra constant $0$  that is the least element in the lattice order, then we get $\mathsf{FL}_{ew}$-algebras. Commutative integral residuated lattices and $\mathsf{FL}_{ew}$-algebras form varieties that have a very rich structure; for an equational axiomatization and a list of valid identities we refer the reader to \cite{BlountTsinakis2003}.

Varieties of $\mathsf{FL}_{ew}$-algebras and commutative integral residuated lattices are {\em ideal determined} (w.r.t. $1$) in the sense of  \cite{AglianoUrsini1992}; this means that there is a one-to-one correspondence
(which is in fact a lattice isomorphism) between the congruences of an algebra $\alg A$ and certain special subsets of $\alg A$. In the present case if $\alg A$ is an $\mathsf{FL}_{ew}$-algebra or a commutative integral residuated lattice a {\bf filter} of $\alg A$ is a filter $F$ of the lattice structure which is also closed under the monoidal operation.  If $\th \in \Con A$ then $1/\th$ is clearly a filter of $\alg A$ and it is easily checked that
if $F$ is a filter then  $\th_F =\{(a,b):  a \imp b,b \imp a \in F\} \in \Con A$ and the correspondence is
$$
\th \longmapsto 1 /\th  \qquad  F \longmapsto \th_F.
$$
If $F$ is a filter we will write $\alg A/F$ for $\alg A/\th_F$.

 Let's focus on two equations that bear interesting consequences, i.e., prelinearity and divisibility:
\begin{align}
&(x \imp y) \join (y \imp x) \app 1.\tag{prel}\\
&x(x \imp y) \app y(y \imp x); \tag{div}
\end{align}
It can be shown (see \cite{BlountTsinakis2003} and \cite{JipsenTsinakis2002}) that a subvariety of $\mathsf{FL_{ew}}$ satisfies the prelinearity equation (prel) if and only if any algebra therein is a subdirect product of totally ordered algebras, and this implies via Birkhoff's Theorem that all the subdirectly irreducible algebras are totally ordered. Such varieties are called {\em representable} (or {\em semilinear}) and the subvariety axiomatized by (prel) is the largest subvariety of $\mathsf{FL_{ew}}$ that is representable; such variety is usually denoted by $\MTL$, since it is the equivalent algebraic semantics of Esteva-Godo's {\em Monoidal t-norm based logic} \cite{EstevaGodo2001}.

If an algebra in $\mathsf{FL_{ew}}$ satisfies both (prel) and (div) then it is called a $\BL$-algebra and the variety of all $\BL$-algebras is denoted by $\BL$. Again the name comes from logic: the variety of $\BL$-algebras is the equivalent algebraic semantics of {\em H\'ajek's Basic Logic} $\mathcal{BL}$ \cite{Hajek1998}. A systematic investigation of varieties of $\BL$-algebras started with \cite{AglianoMontagna2003} and it is still ongoing (see \cite{Agliano2017c} and the bibliography therein).

It follows from the definition that given a variety of bounded commutative integral residuated lattices, the class of its {\em $0$-free subreducts} is a class of residuated lattices; we have a very general result.

\begin{lemma} Let $\vv V$ be any subvariety of $\mathsf{FL_{ew}}$; then the class $\SU^0(\vv V)$ of the zero-free subreducts of algebras in $\vv V$ is a variety.
\end{lemma}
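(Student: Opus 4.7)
The plan is to verify directly that $\SU^0(\vv V)$ is closed under $\HH$, $\SU$, and $\PP$, and then invoke Birkhoff's theorem (stated in Lemma \ref{lemma:ISP}(3)) to conclude that it is a variety. Closure under $\SU$ and $\PP$ is essentially formal: any subalgebra of a $0$-free subreduct of $\alg A \in \vv V$ is itself a $0$-free subreduct of $\alg A$; and if $\alg B_i \le \alg A_i^{-}$ (the $0$-free reduct of $\alg A_i \in \vv V$), then $\prod_i \alg B_i \le (\prod_i \alg A_i)^{-}$, and $\prod_i \alg A_i \in \vv V$ since $\vv V$ is a variety.

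The only real content is closure under $\HH$. Suppose $\alg B \in \SU^0(\vv V)$, so $\alg B$ embeds as a $0$-free subalgebra of some $\alg A \in \vv V$, and let $h : \alg B \twoheadrightarrow \alg C$ be a surjective homomorphism of commutative integral residuated lattices. Since $\alg B$ is such a residuated lattice, the ideal-determinedness result recalled in the Preliminaries applies to $\alg B$ as well, so $F := h^{-1}(1)$ is a filter of $\alg B$ and $\alg C \cong \alg B/F$. The natural candidate target algebra is $\alg A/F^\ast$, where $F^\ast$ is the filter of $\alg A$ generated by $F$; this lives in $\vv V$, and so its $0$-free reduct lies in $\SU^0(\vv V)$. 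The map $b/F \mapsto b/F^\ast$ is clearly a well-defined homomorphism of residuated lattices, so it only remains to show that it is injective, i.e.\ that
\[
F^\ast \cap B = F.
\]
This is the step I expect to carry the whole argument. Using the explicit description of filter generation in a commutative integral residuated lattice and the fact that a filter is already closed under $\cdot$, one has $F^\ast = \{a \in A : a \ge f \text{ for some } f \in F\}$. If $b \in B$ satisfies $b \ge f$ in $\alg A$ for some $f \in F$, then $b \meet f = f$ holds in $\alg A$; since $\alg B$ is closed under $\meet$, the same identity holds in $\alg B$, so $b \ge f$ in $\alg B$, and $F$ being a lattice filter of $\alg B$ forces $b \in F$.

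Once $F^\ast \cap B = F$ is established, the map $\alg B/F \hookrightarrow (\alg A/F^\ast)^{-}$ is an embedding of residuated lattices, so $\alg C \in \SU\SU^0(\vv V) = \SU^0(\vv V)$, completing the proof. The main technical point is really the compatibility of filter generation with the subreduct inclusion $\alg B \le \alg A^{-}$; everything else reduces to bookkeeping about the HSP operators and the correspondence between filters and congruences recalled in the preceding subsection.
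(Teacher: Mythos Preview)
Your proof is correct and follows essentially the approach the paper alludes to: the paper simply cites Proposition~1.10 of \cite{AFM} and remarks that only the description of congruence filters is needed, which is precisely the mechanism you exploit for closure under $\HH$. Your explicit verification that $F^\ast \cap B = F$ (via $F^\ast = {\uparrow}F$ in $\alg A$ and the fact that $\alg B$ is a $\meet$-subalgebra, hence order-reflecting) is exactly the filter computation that makes the cited argument go through in any subvariety of $\mathsf{FL_{ew}}$.
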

\begin{proof}The proof is as in Proposition 1.10 of \cite{AFM}; it is stated for varieties of $\BL$-algebras but it uses only the description of the congruence filters, that can be used in any subvariety of $\mathsf{FL_{ew}}$ (as the reader can easily check).
\end{proof}

This implies at once that if a variety of $\mathsf{FL}_{ew}$-algebras is the equivalent algebraic semantics of a logic $\mathcal{L}$, then the variety of its zero-free subreducts is the equivalent algebraic semantics of the positive fragment $\mathcal{L}^+$. A {\bf basic hoop} is a zero-free subreduct of a divisible and prelinear $\mathsf{FL}_{ew}$-algebra. Note that in any $\mathsf{FL}_{ew}$-algebras
the prelinearity equation makes the join definable using $\meet$ and $\imp$ (see for instance \cite{Agliano2018c}):
\begin{equation*}
((x \imp y) \imp y) \meet ((y \imp x) \imp x) \app x \join y.
\end{equation*}
So basic hoops are often presented in the signature $\meet,\imp, 1$;  as noted in \cite{AFM} the variety $\mathsf{BH}$ of basic hoops is the equivalent algebraic semantics of the positive fragment of the logic $\mathcal{BL}$.

A {\bf Wajsberg hoop} is a basic hoop satisfying the so-called {\em Tanaka's equation}
\begin{equation*}
(x \imp y) \imp y \app (y \imp x) \imp x.
\end{equation*}
If we add a constant $0$ to the signature that is the least element in the lattice order then we have {\bf Wajsberg algebras}; Wajsberg algebras are term equivalent to $\mathsf{MV}$-algebras (see \cite{AglianoPanti1999} p. 354 for a detailed explanation) and the variety of $\mathsf{MV}$-algebras is usually presented as the equivalent algebraic semantics of \L ukasiewicz logic $\mathcal{MV}$. It follows that the variety $\mathsf{WH}$ of Wajsberg hoops, which is the variety of zero-free subreducts of Wajsberg algebras, is the equivalent algebraic semantics of $\mathcal{MV}^+$, i.e. the positive fragment of \L ukasiewicz logic.

In $\mathsf{FL}_{ew}$-algebras it is customary to introduce the derived operation $\neg x := x \imp 0$; now in a bounded (i.e. with a minimum element $a$) Wajsberg hoop we can still introduce a negation $\neg x = x \imp a$ that is of course not a term but rather a polynomial. Now it is easy to verify that any bounded Wajsberg hoop is polynomially equivalent to a Wajsberg algebra and we will freely use the expression $\neg x$ letting the context clear the meaning.

A commutative integral residuated lattice is {\bf cancellative} if the underlying monoid is cancellative in the usual sense.

\begin{lemma} \cite{BlokFerr2000} Every cancellative basic hoop is a Wajsberg hoop. A totally ordered Wajsberg hoop is either cancellative or bounded.
\end{lemma}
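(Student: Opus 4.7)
My plan splits the proof into two independent parts.

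\emph{First statement.} I would reduce verification of Tanaka's equation $(x \imp y) \imp y \app (y \imp x) \imp x$ to the totally ordered case. Cancellativity of the monoid reduct is captured equationally by $x \imp xy \app y$: in any commutative integral residuated lattice one always has $y \le x \imp xy$ (from $xy \le xy$) and $x(x \imp xy) \le xy$, so $x(x \imp xy) = xy$, and cancellativity of the monoid is equivalent to being able to strip the outer $x$. Hence cancellative basic hoops form a subvariety of basic hoops and, being a subvariety of a representable variety, are themselves representable. Therefore every cancellative basic hoop subdirectly embeds into a product of totally ordered cancellative basic hoops, and it suffices to verify Tanaka's equation in such a chain. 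By symmetry I may assume $x \le y$: then $x \imp y = 1$ gives $(x \imp y) \imp y = y$, and the remaining task is $(y \imp x) \imp x = y$. Setting $z := y \imp x$, divisibility yields $yz = x \meet y = x$, so $y \le z \imp x$ by residuation. For the reverse inequality, $(z \imp x) z \le x = yz$; if $s := z \imp x$ were strictly greater than $y$, monotonicity would give $sz \ge yz$, which combined with $sz \le yz$ yields $sz = yz$, contradicting cancellativity. Hence $z \imp x \le y$, closing the first part.

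\emph{Second statement.} Here I would lean on the structural classification established in \cite{AglianoPanti1999} and recalled in the introduction: every totally ordered Wajsberg hoop is, up to isomorphism, either the negative cone of a totally ordered abelian $\ell$-group or an interval $[a,1]$ inside such a negative cone. Negative cones are cancellative, because their monoid reducts embed into the enveloping abelian group, while intervals $[a,1]$ are bounded below by $a$. The required dichotomy is immediate in either case.

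The main obstacle is the strict monotonicity step inside the first part, which must combine cancellativity with the total order on the chain; once that is explicit the rest of the first claim is a short piece of residuation bookkeeping, and the second claim is essentially a corollary of the \cite{AglianoPanti1999} classification.
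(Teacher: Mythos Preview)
The paper does not prove this lemma at all; it is stated with a bare citation to \cite{BlokFerr2000}. So there is no argument in the paper to compare against, and the only question is whether your proof stands on its own.

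Your treatment of the first statement is correct and self-contained. The equational characterisation $x\imp xy\app y$ of cancellativity, the reduction to chains via representability, and the cancellation step showing $(y\imp x)\imp x=y$ when $x\le y$ are all sound. This is essentially the argument one finds in \cite{BlokFerr2000}.

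Your treatment of the second statement, however, has a circularity problem. The classification of totally ordered Wajsberg hoops as either negative cones of abelian $\ell$-groups or intervals therein is a \emph{consequence} of the cancellative/bounded dichotomy, not an independent route to it: one first shows the dichotomy, then applies the group-of-fractions construction in the cancellative case and Mundici's $\Gamma$ in the bounded case. Indeed \cite{AglianoPanti1999} (which, despite the label, appeared in 2002) postdates \cite{BlokFerr2000} and relies on it. So invoking the classification here assumes what you are trying to prove. A direct argument is short: if a Wajsberg chain $\alg A$ is not cancellative one finds $e,d\in A$ with $ed=e$ and $d<1$; from this one extracts a nontrivial idempotent, and Tanaka's equation then forces any nontrivial idempotent in a Wajsberg chain to be the minimum (if $x<e$ one computes $e\imp x=x$, whence $(e\imp x)\imp x=1$, contradicting $(e\imp x)\imp x=(x\imp e)\imp e=e$). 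That gives boundedness without appealing to the $\ell$-group representation.
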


This allows us to show that the connection between Wajsberg hoops and Wajsberg algebras is even stricter. For instance the operator $\II\SU\PP_u$ on Wajsberg hoops has been studied in \cite{AglianoMontagna2003} using the results about  Wajsberg algebras appeared in \cite{Gispert2002}; while we maintain that it should be clear why we can do this (and in \cite{AglianoMontagna2003} no explanation was given), maybe some clarification is useful.  Wajsberg algebras are polynomially equivalent to bounded Wajsberg hoops; it is easy to see that if $\OO$ is a class operator that is a composition of $\II,\HH,\SU,\PP,\PP_u$,
$\alg A, \alg B$ are Wajsberg algebras and $\alg A_0,\alg B_0$ are their Wajsberg hoop reducts, then $\OO(\alg A) \sse \OO(\alg B)$ if and only if $\OO(\alg A_0) = \OO(\alg B_0)$. This allows us to consider bounded Wajsberg hoops {\em as they were} Wajsberg algebras. Since a totally ordered Wajsberg hoop is either bounded or cancellative we can use results about Wajsberg algebras and integrate them with the cancellative case.

\section{Some useful tools}

\subsection{Wajsberg chains} Bounded Wajsberg hoops have a {\em canonical representation}. Let $\alg G$ be a lattice ordered abelian group; by \cite{Mun1986}, if $u$ is a strong unit of $\alg G$ we can construct a bounded Wajsberg hoop $\Gamma(\alg G,u) = \la [0,u], \imp,\cdot, 0, u\ra$ where $ab = \op{max}\{a+b-u,0\}$ and $a \imp b = \op{min}\{u-a+b,u\}$. The main result of \cite{Mun1986} is that any bounded Wajsberg hoop can be presented in this way (really there is a categorical equivalence between the category abelian $\ell$-groups with strong unit and the category of bounded Wajsberg hoops). Let now  $\mathbb Z \lextimes \mathbb Z$ denote the lexicographic product of two copies of $\mathbb Z$. In other
words, the universe is the Cartesian product and the group operations are defined componentwise
and the ordering is the lexicographic ordering (w.r.t. the natural ordering of $\mathbb Z$); then  $\mathbb Z \lextimes \mathbb Z$ is a totally ordered abelian group and we can apply  $\Gamma$ to it. A {\bf Wajsberg chain} is a totally ordered Wajsberg hoop. Let's define some useful Wajsberg chains:
\begin{enumerate}
\itemb the finite Wajsberg chain with $n+1$ elements $\alg{\L}_n = \Gamma(\mathbb Z,n)$;
\itemb the infinite finitely generated Wajsberg chain $\alg{\L}_n^\infty = \Gamma(\mathbb Z\lextimes\mathbb Z, (n,0))$;
\itemb the infinite  finitely generated  Wajsberg chain $\alg {\L}_{n,k}=\Gamma(\mathbb Z\lextimes\mathbb Z, (n,k))$;
\itemb the unbounded Wajsberg chain $\alg C_\omega$; this can be regarded either as the free monoid on one generator, where the product is the monoid product and $a^l \imp a^m = a^{\op{max}(l-m,0)}$ or, equivalently, as the negative cone of $\mathbb Z$ with the operations defined in the obvious way.
\end{enumerate}

We observe that $\alg {\L}_n^\infty = \alg {\L}_{n,0}$;
moreover the proof of the following is a simple verification:

\begin{lemma}\label{cond1}  \begin{enumerate}
\item For $n,m \in \mathbb N$, $\alg {\L}_n \in \II\SU(\alg {\L}_m)$ if and only if $\alg {\L}_n \in \II\SU(\alg {\L}_m^\infty)$ if and only if $n \mathrel{|} m$.
\item For $n,r,j \in \mathbb N$, $\alg{\L}_n \in \II\SU(\alg{\L}_{r,j})$ if and only if $n \mathrel{|} \gcd\{r,j\}$.
\item If $\alg A$ is a cancellative Wajsberg chain and $a \in A\setminus \{1\}$, then $a$ generates a subalgebra of $\alg A$ isomorphic with $\alg C_\o$.
\end{enumerate}
\end{lemma}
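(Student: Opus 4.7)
The plan is to read everything through Mundici's $\Gamma$-functor, so that the underlying sets become concrete subsets of $\mathbb Z$ or of $\mathbb Z \lextimes \mathbb Z$, and then to make repeated use of the first-coordinate projection
\[
\phi \colon \Gamma(\mathbb Z \lextimes \mathbb Z, (r, j)) \longrightarrow \Gamma(\mathbb Z, r) = \alg{\L}_r, \qquad (i,k) \longmapsto i,
\]
which is easily checked to be a surjective homomorphism (the truncation built into $\Gamma$ makes both $\cdot$ and $\imp$ compatible with the projection at the boundary). A preliminary remark that simplifies the analysis of embeddings $f\colon\alg{\L}_n\hookrightarrow\alg{\L}_m^\infty$ (resp.\ $\alg{\L}_{r,j}$) is that the only idempotents of $\Gamma(\mathbb Z \lextimes \mathbb Z,(r,j))$ are $0$ and $1$, so $f(0_{\alg{\L}_n})$, being an idempotent strictly below $1$, must equal $0$.

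For the sufficiency direction in (1) and (2) an explicit generator works: if $m = nk$ then $k \in \alg{\L}_m$ (resp.\ $(k,0) \in \alg{\L}_m^\infty$) generates $\{0, k, 2k, \ldots, nk\} \cong \alg{\L}_n$, and analogously in (2) the element $(k, \ell)$, for $r = nk$ and $j = n\ell$, generates a copy of $\alg{\L}_n$ inside $\alg{\L}_{r, j}$. For necessity in $\alg{\L}_m$, the subalgebras of $\alg{\L}_m$ are exactly the sets $\{0, d, 2d, \ldots, m\}$ with $d \mid m$ (every nontrivial subalgebra contains $0$ because for any $a < m$ some power $a^N$ vanishes), and the cardinality constraint yields $n \mid m$. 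For necessity in $\alg{\L}_m^\infty$, suppose $f \colon \alg{\L}_n \hookrightarrow \alg{\L}_m^\infty$ sends $1_{\alg{\L}_n}$ to $(a, b)$. If $a = 0$ then the cyclic subsemigroup generated by $(0, b)$ is infinite, contradicting $|\alg{\L}_n| < \infty$; so $a > 0$. The boundary configuration $(n-1)a = m$ is excluded by computing $(a,b) \cdot (n-1)(a,b)$ and observing that it is nonzero, contradicting the identity $1_{\alg{\L}_n} \cdot (n-1)_{\alg{\L}_n} = 0$. Hence $(n-1)a < m$, and $\phi$ restricts to an injection on the image of $f$, producing an embedding $\alg{\L}_n \hookrightarrow \alg{\L}_m$ and therefore $n \mid m$.

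For necessity in (2), the same case analysis forces $a > 0$ and $(n-1)a < r$. The key computation uses that the element $n-1 \in \alg{\L}_n$ equals both the $(n-1)$-fold $\oplus$-sum of $1_{\alg{\L}_n}$ and $\neg 1_{\alg{\L}_n} := 1_{\alg{\L}_n} \imp 0$; since $f$ preserves $\imp$ and (by the idempotent remark) sends $0_{\alg{\L}_n}$ to $0$, this transfers to the identity $(n-1)(a,b) = (a,b) \imp 0$ in $\alg{\L}_{r,j}$. Computing both sides directly in $\Gamma(\mathbb Z \lextimes \mathbb Z, (r, j))$ yields
\[
((n-1)a, (n-1)b) = (r - a, j - b),
\]
whence $na = r$ and $nb = j$, so $n \mid \gcd\{r, j\}$.

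For part (3), write $\cdot$ multiplicatively. Since $\cdot$ is monotonic and $a \le 1$ we have $a^{n+1} \le a^n$, and cancellativity rules out equality (it would force $a = 1$), so $1 > a > a^2 > \cdots$ is strictly descending. The set $\{a^l : l \ge 0\}$ is closed under $\cdot$ trivially; for $\imp$, when $l \ge m$ we have $a^l \le a^m$ hence $a^l \imp a^m = 1$, while for $l < m$ the divisibility identity (div) gives $a^l(a^l \imp a^m) = a^m(a^m \imp a^l) = a^m = a^l \cdot a^{m-l}$, so cancellativity forces $a^l \imp a^m = a^{m-l}$. Closure under $\meet$ and $\join$ is automatic by totality, and the map $a^l \mapsto -l$ witnesses the isomorphism with $\alg C_\omega$. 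The delicate point across the whole proof is the exclusion of the boundary configuration $(n-1)a = m$ (resp.\ $r$) in parts (1) and (2); everything else is a direct $\Gamma$-arithmetic calculation.
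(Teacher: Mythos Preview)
The paper does not give a proof beyond ``simple verification'', so your explicit argument is a genuine addition. Part (3) is correct as written. For parts (1)--(2) the approach via $\Gamma$ and the projection $\phi$ is sound, but one step is not fully justified.

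You write $f(n-1)$ as $(n-1)(a,b) = ((n-1)a,(n-1)b)$, i.e.\ as the untruncated $(n-1)$-fold $\oplus$-sum of $f(1)$, and then use this to rule out the boundary case. But $f(n-1) = f(1)^{\oplus(n-1)} = \min\{((n-1)a,(n-1)b),\, u\}$ (with $u$ the top element), and this equals $((n-1)a,(n-1)b)$ only when that group element lies strictly below $u$. If it does not --- for instance if $(n-1)a > m$, a case you never mention, or if $(n-1)a = m$ and $b > 0$ --- then $f(n-1) = u = f(n)$ and your product computation is not about $f(1)\cdot f(n-1)$ at all. The missing sentence is that injectivity of $f$ forces $f(n-1) < f(n) = u$, hence $((n-1)a,(n-1)b) <_{\mathrm{lex}} u$; once this is said, your computation does exclude the remaining case $(n-1)a = m$ with $b \le 0$. (Analogously, the ``cyclic subsemigroup'' generated by $(0,b)$ under $\cdot$ is not infinite, since $(0,b)^2 = (0,0)$; you mean the $\oplus$-semigroup, or simply the subalgebra.)

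A slicker route, using machinery you already set up, avoids all the casework: $\alg{\L}_n$ is simple, so the composite $\phi\circ f\colon \alg{\L}_n \to \alg{\L}_r$ is either constant or injective, and it is not constant since $0 \mapsto 0$ and $n \mapsto r$. Hence $\alg{\L}_n \hookrightarrow \alg{\L}_r$, which already gives $n \mid r$ (and $n \mid m$ for part (1)); moreover $\phi(f(1)) = r/n$, so $a = r/n$ and $(n-1)a < r$ is automatic. Your double computation of $f(n-1)$ via $\oplus$ and via $\neg$ is then legitimate and yields $nb = j$, finishing (2).
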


In \cite{AglianoPanti1999} it has been shown that every proper variety of Wajsberg hoops is generated by a finite number of chains of the type described above.
More precisely a {\bf presentation} $P$ is a triple  $(I,J,K)$  $I,J$ are finite subsets of $\mathbb N\setminus\{0\}$ and $K \sse \{\o\}$. We say that the presentation
$P =(I,J,K)$ is {\bf reduced} if
\begin{enumerate}
\ib $I \cup J \cup K \ne \emptyset$;
\ib if $J \ne \emptyset$ then $K = \emptyset$;
\ib no $m \in I$ divides any $m' \in (I \setminus \{m\}) \cup J$;
\ib no $n \in J$ divides any $n'\in J\setminus\{m\}$.
\end{enumerate}
For any reduced presentation $P=(I,J,K)$ we define a set  of Wajsberg hoops $\vv K_P$ in the following way:
\begin{enumerate}
\ib if $P= (I,J,\emptyset)$ then $\vv K_P =\{\alg {\L}_i: i \in I\} \cup \{ \alg {\L}_j^\infty:  j\in J\}$,
\ib if $P =(I,\emptyset,\{\o\})$ then $\vv K_P  = \{\alg {\L}_i: i \in I\} \cup \{\alg C_\o\}$.
\end{enumerate}
Then we set $\vv V(P)= \VV(\vv K_P)$ and $\vv Q(P) = \QQ(\vv K_P)$.

\begin{theorem}\label{aglianopanti} \cite{AglianoPanti1999} The proper subvarieties of Wajsberg hoops are in one-to-one correspondence with the reduced presentations via
$$
P \longmapsto \vv V(P).
$$
\end{theorem}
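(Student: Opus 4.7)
The plan is to obtain Theorem \ref{aglianopanti} as a hoop-theoretic refinement of Komori's classification of subvarieties of $\mathsf{MV}$-algebras \cite{Komori1981}, lifted across the polynomial equivalence between bounded Wajsberg hoops and Wajsberg algebras and augmented by a separate treatment of cancellative chains. The combinatorial skeleton of the argument is dictated by Lemma \ref{cond1}, which encodes all the subalgebra relations we need.

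First I would reduce the problem to chains. Since $\WH$ satisfies (prel), every subvariety is semilinear, hence generated by its totally ordered members; and because varieties of residuated lattices are congruence distributive, J\'onsson's Lemma (Theorem \ref{birkhoff}(2)) yields $\vv V_{si}\subseteq \HH\SU\PP_u(\vv K)$ whenever $\vv V=\VV(\vv K)$. It therefore suffices to describe $\vv V_{si}$ for each proper $\vv V\subsetneq \WH$. By the dichotomy cited just before the theorem, a Wajsberg chain is either cancellative or bounded; Lemma \ref{cond1}(3) identifies the singly-generated cancellative chains with $\alg C_\o$, while the Mundici functor $\Gamma$ puts every bounded Wajsberg chain into the form $\Gamma(\alg G,u)$ for a totally ordered abelian group with strong unit. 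Transferring Komori's ultrapower analysis through $\Gamma$ restricts the bounded chains that can appear in $\vv V_{si}$ to those of the form $\alg{\L}_n$ (from $\Gamma(\mathbb Z,n)$) and $\alg{\L}_{n,k}$ (from $\Gamma(\mathbb Z\lextimes \mathbb Z,(n,k))$).

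For well-definedness and injectivity of the map $P\mapsto \vv V(P)$, I would combine J\'onsson's Lemma with parts (1)--(2) of Lemma \ref{cond1}. The reduction conditions (3) and (4) are arranged precisely so that no chain in $\vv K_P$ embeds into any product-with-ultraproduct of the remaining ones; consequently each element of $\vv K_P$ is genuinely subdirectly irreducible in $\vv V(P)$ and, up to isomorphism, $\vv V(P)_{si}=\vv K_P$. This recovers $\vv K_P$ from $\vv V(P)$, so distinct reduced presentations yield distinct varieties; properness of $\vv V(P)$ follows because $\WH$ itself is not generated by any finite set of the listed chains.

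Surjectivity is the real work. Given $\vv V\subsetneq \WH$, set $I=\{n:\alg{\L}_n\in \vv V_{si}\}$, let $J$ collect the parameters of the $\alg{\L}_{n,k}\in \vv V_{si}$, and put $K=\{\o\}$ iff $\alg C_\o\in \vv V_{si}$. Using Lemma \ref{cond1}(1)--(2) to delete entries that are subalgebras of others produces a triple satisfying (3)--(4), and then Birkhoff and J\'onsson give $\vv V=\vv V(P)$. The crux, and the step I expect to be the main obstacle, is the finiteness claim hidden here: one must show that $\vv V\ne \WH$ forces only finitely many $\alg{\L}_n$ and only finitely many $\alg{\L}_{n,k}$ to be pairwise non-comparable under the divisibility orderings of Lemma \ref{cond1}. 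This is the Komori-style ultraproduct argument carried out in \cite{AglianoPanti1999}: from any infinite antichain of such chains one builds, via $\PP_u$ over $\mathbb Z\lextimes \mathbb Z$, an algebra generating all of $\WH$, contradicting the hypothesis. Once this bounding step is in place, the reduction conditions are reached by a purely combinatorial pruning, and the bijection is established.
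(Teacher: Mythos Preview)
The paper does not prove this theorem: it is quoted from \cite{AglianoPanti1999} (the citation is attached to the theorem header), so there is no in-paper argument to compare your proposal against. Your outline is broadly in the spirit of that source, but two slips are worth flagging.

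First, the claim ``up to isomorphism, $\vv V(P)_{si}=\vv K_P$'' is false. For $P=(\emptyset,\{2\},\emptyset)$ one has $\vv K_P=\{\alg{\L}_2^\infty\}$, yet $\vv V(P)_{si}$ also contains $\alg{\L}_1$, $\alg{\L}_2$, $\alg C_\o$, and every $\alg{\L}_{2,k}$. What the reduction conditions actually buy is that no member of $\vv K_P$ lies in $\HH\SU\PP_u$ of the remaining members, so each generator is indispensable; recovering $P$ from $\vv V(P)$ then amounts to picking out the $\HH\SU\PP_u$-maximal chains in $\vv V(P)_{si}$, not reading off the whole si-class. Your injectivity paragraph needs this correction to go through.

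Second, in the surjectivity step you let $J$ ``collect the parameters of the $\alg{\L}_{n,k}\in\vv V_{si}$'', but only the rank $n$ matters for the variety generated (one has $\VV(\alg{\L}_{n,k})=\VV(\alg{\L}_n^\infty)$ for all $k$), and you do not enforce condition~(2) of a reduced presentation: whenever some infinite bounded chain is present, $\alg C_\o$ already lies in its generated variety, so $K$ must be set to $\emptyset$ rather than $\{\o\}$. Without these adjustments the triple you extract need not be reduced, and the map is not well defined on the codomain side.
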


\subsection{The construction of $\alg B_\Delta$}\label{bdelta}

Any proper subvariety of Wajsberg hoops is axiomatizable (modulo Wajsberg hoops) by an equation in a single variable; this is essentially Theorem 4.4 in \cite{AglianoPanti1999} and its proof uses the functional characterization of
free Wajsberg hoops which we will be using later in this paper. From that, it follows that for any proper subvariety $\vv V$ of Wajsberg hoops $\QQ(\alg F_\vv V(\o)) = \QQ(\alg F_\vv V(x))$. Indeed, $\vv V=\HH\QQ(\alg F_\vv V(\o))=\HH\QQ(\alg F_\vv V(x))$ because $\vv V$ can be axiomatized in one variable and, since $\QQ(\alg F_\vv V(\o))$ is structural, this means that it has to be the smallest quasivariety that generates $\vv V$, so $\QQ(\alg F_\vv V(\o))\subseteq \QQ(\alg F_\vv V(x))$; the other inclusion is trivial.

In this section we will give an alternative description of $\alg F_\vv V(x)$ where $\vv V$ is a proper subvariety of Wajsberg hoops.

\begin{lemma}\label{onegenerated}
Let $\alg{A}$ be a totally ordered Wajsberg hoop, assume that $\alg{A}$ is one-generated, then $\alg{A}\in \VV(\alg{\L}_n^\infty)$ if and only if one of the following holds:
\begin{enumerate}
    \item there exists $1\leq k|n$ such that $\alg{A}\cong\alg{L}_k$;
    \item there exist $1\leq k|n$ and $0\leq h<k$ with $k,h$ relatively prime such that $\alg{A}\cong\alg{\L}_{k,h}$;
    \item $\alg{A}\cong C_\omega$.
\end{enumerate}
\end{lemma}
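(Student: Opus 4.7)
My plan is to combine J\'onsson's Lemma with the Blok--Ferreirim dichotomy and Mundici's $\Gamma$ functor, reducing the classification of one-generated totally ordered members of $\VV(\alg{\L}_n^\infty)$ to a description of rank-at-most-two totally ordered abelian $\ell$-groups with a distinguished strong unit. Since $\alg{A}$ is a totally ordered (hence subdirectly irreducible) Wajsberg hoop and $\VV(\alg{\L}_n^\infty)$ is congruence distributive, J\'onsson's Lemma (Theorem~\ref{birkhoff}(2)) places $\alg{A}$ in $\HH\SU\PP_u(\alg{\L}_n^\infty)$; every operator involved preserves the totally ordered nature, so $\alg{A}$ remains a Wajsberg chain throughout.

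Next I apply the Blok--Ferreirim dichotomy. In the cancellative case, any nontrivial generator $a\neq 1$ yields $\alg{A}\cong \alg{C}_\omega$ by Lemma~\ref{cond1}(3), producing case~(3); this chain sits inside $\alg{\L}_n^\infty=\Gamma(\mathbb Z\lextimes\mathbb Z,(n,0))$ as the subalgebra generated by $(n,-1)$, which a short verification shows to be isomorphic to $\alg{C}_\omega$. In the bounded case I pass through Mundici's equivalence to write $\alg{A}\cong\Gamma(\alg{G},u)$ with $\alg{G}$ totally ordered abelian and $u$ a strong unit. One-generation of $\alg{A}$ transfers to $\alg{G}$ being generated, as an $\ell$-group with distinguished unit, by $u$ together with the single hoop generator, so $\alg{G}$ has abelian rank at most two. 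If $\alg{G}$ is archimedean, $\alg{G}\cong\mathbb Z$ and $\alg{A}\cong\alg{\L}_k$ for $k=u$, with $k\mathrel{|} n$ coming from Lemma~\ref{cond1}(1), giving~(1). Otherwise $\alg{G}$ embeds into $\mathbb Z\lextimes \mathbb Z$ and after an $\ell$-group automorphism I normalize $u=(k,h)$ with $0\leq h<k$; the primitivity of $u$ in $\mathbb Z\lextimes\mathbb Z$, equivalently $\gcd(k,h)=1$, is exactly what is needed for $u$ together with a suitable element of $[0,u]$ to generate all of $\mathbb Z\lextimes\mathbb Z$. Membership in $\VV(\alg{\L}_n^\infty)$ combined with Lemma~\ref{cond1}(2) then forces $k\mathrel{|} n$, producing~(2).

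For the converse direction, $\alg{\L}_k$ and $\alg{C}_\omega$ embed into $\alg{\L}_n^\infty$ as already discussed, while for $\alg{\L}_{k,h}$ with $k\mathrel{|}n$ and $\gcd(k,h)=1$ the standard device of replacing the infinitesimal coordinate with a nonstandard integer exhibits $\alg{\L}_{k,h}$ inside an ultrapower of $\alg{\L}_n^\infty$, hence inside $\VV(\alg{\L}_n^\infty)$. The main technical obstacle is the non-archimedean sub-case of the forward direction: verifying that one-generation of the hoop $\alg{A}$ is truly equivalent to primitivity of $u$ in $\mathbb Z\lextimes\mathbb Z$, and that the normalization $0\leq h<k$ can always be arranged by an $\ell$-group automorphism without leaving $\VV(\alg{\L}_n^\infty)$. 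The rest is bookkeeping with the $\Gamma$ construction and Lemma~\ref{cond1}.
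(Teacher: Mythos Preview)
Your route is valid but genuinely different from the paper's. The paper dispatches the bounded case in one line: once $\alg A$ is bounded it is polynomially an $\mathsf{MV}$-chain lying in $\VV(\alg{\L}_n^\infty)$, and Theorem~1.8 of Di~Nola, Grigolia and Panti \cite{DiNolaGrigoliaPanti1998} directly classifies the one-generated $\mathsf{MV}$-chains in that variety as the $\alg{\L}_k$ and $\alg{\L}_{k,h}$ in~(1) and~(2). Your plan instead \emph{reproves} that classification from scratch through Mundici's $\Gamma$ and the structure theory of rank-$\le 2$ totally ordered abelian groups. This buys self-containment at the cost of the normalization and primitivity arguments you yourself flag as the main obstacle; the paper simply outsources all of that to \cite{DiNolaGrigoliaPanti1998}.

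Two points need tightening. First, ``totally ordered (hence subdirectly irreducible)'' is false in general: Wajsberg chains coincide with the \emph{finitely} subdirectly irreducible members, not the subdirectly irreducible ones (think of $\Gamma$ applied to an $\ell$-group with no minimal nonzero convex subgroup). You either need the FSI form of J\'onsson's Lemma, or the observation---which follows a posteriori from your own rank-$\le 2$ analysis---that a one-generated chain is in fact SI. Second, Lemma~\ref{cond1} only describes $\II\SU$, so it does not by itself convert ``$\alg{\L}_k\in\VV(\alg{\L}_n^\infty)$'' or ``$\alg{\L}_{k,h}\in\VV(\alg{\L}_n^\infty)$'' into $k\mathrel{|}n$. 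For that you need an extra step, most cleanly via rank: any bounded chain in $\HH\SU\PP_u(\alg{\L}_n^\infty)$ has rank dividing $n$ (this is behind Lemma~\ref{whproperties} and the description of $\VV(\alg{\L}_n^\infty)$ in Theorem~\ref{aglianopanti}). With these two adjustments your argument goes through.
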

\begin{proof} As $\alg{A}$ is totally ordered, it is either bounded or cancellative. If it is cancellative then since it is one-generated, it must be $\alg{A}\cong C_\omega$; if it is bounded, then it can be seen as an $\mathsf{MV}$-algebra, so, by \cite{DiNolaGrigoliaPanti1998} (Theorem 1.8), either 1. or 2. holds.

Conversely if $\alg A \cong C_\omega$, then clearly $\alg A \in \VV(\alg{\L}_n^\infty)$; in the other cases we appeal again to Theorem 1.8 in \cite{DiNolaGrigoliaPanti1998}.
\end{proof}

From now on, given a finite subset $X$ of $\mathbb{N}$, we will denote by $X{\downarrow}$ the set of all the divisors of elements of $X$.

\begin{lemma}\label{onegenerated2} Let $\alg A$ be a totally ordered Wajsberg hoop, assume that $\alg A$ is one generated and let $P = (I,J,\emptyset)$ be a reduced presentation with $J \ne \emptyset$.  Then $\alg A\in \vv V(P)$  if and only if one of the following holds:
\begin{enumerate}
    \item there exists $k\in I{\downarrow}\cup J{\downarrow}$ such that $\alg A\cong\alg{\L}_{k}$;
    \item there exist $k\in J{\downarrow}$ and $0\leq h<k$ with $k,h$ relatively prime such that $\alg A\cong\alg{\L}_{k,h}$;
    \item $\alg A\cong C_\omega$.
\end{enumerate}
\end{lemma}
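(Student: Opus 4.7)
The plan is to mirror the structure of Lemma~\ref{onegenerated}, but with $\vv V(\alg{\L}_n^\infty)$ replaced by the larger variety $\vv V(P) = \VV(\vv K_P)$ where $\vv K_P = \{\alg{\L}_i : i \in I\} \cup \{\alg{\L}_j^\infty : j \in J\}$. The main engine is J\'onsson's Lemma (Theorem~\ref{birkhoff}(2)), which applies because Wajsberg hoops are congruence distributive (they have a distributive lattice reduct).

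For the forward direction, split on whether $\alg A$ is cancellative or bounded. In the cancellative case, Lemma~\ref{cond1}(3) immediately yields $\alg A \cong C_\omega$, giving clause (3). Assume now that $\alg A$ is bounded. Since $\alg A$ is totally ordered it is subdirectly irreducible, so J\'onsson's Lemma gives $\alg A \in \HH\SU\PP_u(\vv K_P)$. The key intermediate step is to analyse the ultraproducts $\prod_U \alg B_k$ with each $\alg B_k \in \vv K_P$: the index set $K$ partitions according to the isomorphism type of $\alg B_k$, and because $U$ is an ultrafilter on the \emph{finite} family $\vv K_P$, exactly one block of the partition lies in $U$. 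Therefore, up to isomorphism, every element of $\PP_u(\vv K_P)$ is either (i) isomorphic to some $\alg{\L}_i$ (ultrapowers of finite algebras being trivial) or (ii) an ultrapower of some single $\alg{\L}_j^\infty$. Consequently
\[
\HH\SU\PP_u(\vv K_P) \sse \bigcup_{i \in I} \HH\SU(\alg{\L}_i) \; \cup \; \bigcup_{j \in J} \HH\SU\PP_u(\alg{\L}_j^\infty).
\]
In the first alternative $\alg A$ is a one-generated finite Wajsberg chain, hence $\alg A \cong \alg{\L}_k$ with $k \mathrel{|} i$, so $k \in I{\downarrow}$ and clause (1) holds. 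In the second alternative, $\alg A \in \VV(\alg{\L}_j^\infty)$, so Lemma~\ref{onegenerated} applies directly and yields $\alg A \cong \alg{\L}_k$ (with $k \mathrel{|} j$, hence $k \in J{\downarrow}$), or $\alg A \cong \alg{\L}_{k,h}$ (with $k \mathrel{|} j$ and $\gcd(k,h)=1$, hence $k \in J{\downarrow}$), or $\alg A \cong C_\omega$.

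The converse is mostly bookkeeping. If $\alg A \cong \alg{\L}_k$ with $k \in I{\downarrow} \cup J{\downarrow}$, pick a witness $m \in I \cup J$ with $k \mathrel{|} m$; Lemma~\ref{cond1}(1) embeds $\alg{\L}_k$ in $\alg{\L}_m$ or $\alg{\L}_m^\infty$, either way placing it in $\vv V(P)$. If $\alg A \cong \alg{\L}_{k,h}$ with $k \in J{\downarrow}$ and $\gcd(k,h)=1$, choose $j \in J$ with $k \mathrel{|} j$ and apply Lemma~\ref{onegenerated} to conclude $\alg A \in \VV(\alg{\L}_j^\infty) \sse \vv V(P)$. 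Finally, since $J \ne \emptyset$, pick any $j \in J$; clause (3) of Lemma~\ref{onegenerated} gives $C_\omega \in \VV(\alg{\L}_j^\infty) \sse \vv V(P)$.

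I expect the most delicate step to be the ultrafilter reduction from $\HH\SU\PP_u(\vv K_P)$ to a single generator type: one must exploit the finiteness of $\vv K_P$ to see that an ultraproduct over a mixed family collapses (along $U$) to an ultraproduct of a single isomorphism class of generators. Once that step is justified, the rest is a routine combination of Lemma~\ref{cond1}, Lemma~\ref{onegenerated}, and the containments $\VV(\alg{\L}_i), \VV(\alg{\L}_j^\infty) \sse \vv V(P)$. Note that the hypothesis $J \ne \emptyset$ is used twice: to produce $C_\omega$ in $\vv V(P)$, and to secure the infinite chains $\alg{\L}_{k,h}$, neither of which can be reached purely from the finite generators $\alg{\L}_i$.
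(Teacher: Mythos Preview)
Your forward argument differs from the paper's. The paper simply asserts that the ``only if'' proof of Lemma~\ref{onegenerated} carries over unchanged: a one-generated chain is either cancellative (hence $C_\omega$) or bounded, and in the bounded case Theorem~1.8 of \cite{DiNolaGrigoliaPanti1998} classifies one-generated MV-chains as $\alg{\L}_k$ or $\alg{\L}_{k,h}$, after which the divisibility constraints on $k$ follow from membership in $\vv V(P)$. You instead reduce via J\'onsson's Lemma and the finiteness of $\vv K_P$ to a single generator variety $\VV(\alg{\L}_i)$ or $\VV(\alg{\L}_j^\infty)$, and then invoke Lemma~\ref{onegenerated} only in the latter case. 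This is a legitimate and more self-contained alternative, and your converse is essentially identical to the paper's.

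There is, however, one slip. The assertion ``since $\alg A$ is totally ordered it is subdirectly irreducible'' is false: Wajsberg chains are exactly the \emph{finitely} subdirectly irreducible Wajsberg hoops, and not every chain is subdirectly irreducible (e.g.\ the negative cone of the lexicographic power $\mathbb{Z}^{(\omega)}$ is a cancellative Wajsberg chain whose filter lattice has no atom). Theorem~\ref{birkhoff}(2) as stated therefore does not apply directly to $\alg A$. The repair is short. J\'onsson's Lemma together with your ultrafilter reduction yields $\vv V(P)_{si} \subseteq \bigcup_{i\in I}\VV(\alg{\L}_i) \cup \bigcup_{j\in J}\VV(\alg{\L}_j^\infty)$. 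Now write the chain $\alg A$ as a subdirect product of subdirectly irreducible quotients $\alg A/\theta_s$; the kernels $\theta_s$ are totally ordered (congruences of a chain form a chain) with intersection $0$, and each quotient lies in one of the finitely many varieties above. By pigeonhole one of these varieties accounts for a family of $\theta_s$'s that is coinitial in the congruence chain, so $\alg A$ itself lies in that single $\VV(\alg{\L}_i)$ or $\VV(\alg{\L}_j^\infty)$, and the rest of your argument goes through. Alternatively, you may simply invoke the standard strengthening of J\'onsson's Lemma to finitely subdirectly irreducible algebras in congruence-distributive varieties.
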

\begin{proof} The ``only if'' part is exactly as in Lemma \ref{onegenerated}.

If $\alg A\cong\alg{\L}_k$ for some $k\in I{\downarrow}\cup J{\downarrow}$, then, if $k\in I{\downarrow}$ there exists $i\in I$ such that $\alg A\in \VV(\alg{\L}_i)\subseteq \vv V(I,J,\emptyset)$; if $k\in J{\downarrow}$, then by Lemma \ref{onegenerated} there exists a $j\in J$ such that $\alg A\in \VV(\alg{\L}_j^\infty)\subseteq \vv V(I,J,\emptyset)$.

If $\alg A\cong\alg{\L}_{k,h}$ for some $k\in J{\downarrow}$ and $0\leq h<k$ with $h,k$ relatively prime, then by Lemma \ref{onegenerated} there exists $j\in J$ such that $\alg A\in \VV(\alg{\L}_j^\infty)\subseteq \vv V(I,J,\emptyset)$.
Finally, if $\alg A\cong C^\omega$ clearly $\alg A\in \VV(\alg{\L}_j^\infty)$ for every $j\in J{\downarrow}$, so, since $J\neq\emptyset$, $\alg A\in \vv V(I,J,\emptyset)$.
\end{proof}

\begin{remark}\label{rem:generators}
If $\alg A\cong\alg{\L}_k$, then $h$ generates $\alg A$ if and only if $h,k$ are relatively prime.

If $\alg A\cong\alg{\L}_{k,h}$ with $k\neq 1$ and $h<k$ then there exists a unique $g_{k,h}\in\alg A$ with $g_{k,h}\leq\neg g_{k,h}$  such that $a$ generates $\alg{A}$ if and only if $a=g_{k,h}$ or $a=\neg g_{k,h}$. Moreover $g_{k,1}=(1,0), g_{k,k-1}=(1,1)$ and, if $h\neq 1, h\neq k-1$, then $g_{k,h}=(r,s)$ with $1<r<\frac{k}{2}$. If $k=1$ we get that $\alg{\L}_{1,0}$ is generated by $g_{1,0}=(0,1)$, but this time $\neg g_{1,0}=(1,-1)$ generates a subalgebra of $\alg{\L}_{1,0}$ isomorphic to $C_\omega$.

Note that in all these cases, we can use the operation $\neg$ because all the algebras are bounded; in particular, since the generator of $\alg{\L}_{k,h}$ has always order $2$, we can write $0$ as $g_{k,h}^2$, so for every $a\in\alg{\L}_{k,h}$ we get $\neg a=a\rightarrow g_{k,h}^2$.
\end{remark}

Now we fix a  reduced triple $P=(I,J,K)$ and let
\begin{align*}
&\Delta_I = \{(k,h,2): 0 \le h < k \in I{\downarrow}, \text{$k,h$ relatively prime}\}\\
&\Delta_J = \{(k,h,i): i \in \{0,1\},  h < k \in J{\downarrow}, \text{$k,h$ relatively prime}\}\\
&\Delta_K = \left\{
              \begin{array}{ll}
                \emptyset, & \hbox{if $J \ne \emptyset$;} \\
                \{(0,0,3)\}, & \hbox{otherwise.}
              \end{array}
            \right.
\end{align*}
Moreover for any $k,h$ we define
\begin{align*}
&\alg A^0_{k,h} = \alg A^1_{k,h} := \alg {\L}_{k,h}\\
&\alg A^2_{k,h} = \alg {\L}_k\\
&\alg A^3_{k,h} = \alg C_\o.
\end{align*}
If $\Delta = \Delta_I \cup \Delta_J \cup \Delta_K$ we let $\alg A_\Delta = \prod_{(k,h,i) \in \Delta} \alg A^i_{k,h}$; moreover we denote by $c$ the generator of $\alg C_\o$.
We want to define a $\ol{g} \in \alg A_\Delta$ by cases; for any $(k,h,i) \in \Delta$
\begin{align*}
&\text{if $J=\emptyset$,}\ \ \ol{g}(k,h,i) = \left\{
                                           \begin{array}{ll}
                                             h, & \hbox{if $i=2$;} \\
                                             c, & \hbox{if $i=3$.}
                                           \end{array}
                                         \right.\\
&\text{if $J \ne\emptyset$,}\ \ \ol{g}(k,h,i) =  \left\{
                                              \begin{array}{ll}
                                                 g_{k,h}, & \hbox{if $i=0$;} \\
                                                 \neg g_{k,h}, & \hbox{if $i=1$;} \\
                                                 h, & \hbox{if $i=2$.}
                                               \end{array}
                                             \right.
\end{align*}

\begin{theorem}\label{thm:bdelta} Let $P=(I,J,K)$ be any reduced triple and let $\ol g$ and $\alg A_\Delta$ as above. If $\alg B_\Delta$ is the subalgebra of $\alg A$ generated by
$\ol g$, then $\alg B_\Delta \cong \alg F_{\vv V(P)}(x)$.
\end{theorem}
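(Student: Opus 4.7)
The plan is to verify that $\alg B_\Delta$ satisfies the universal mapping property of the free one-generated algebra in $\vv V(P)$: it lies in $\vv V(P)$, is generated by $\ol g$, and for every $\alg C \in \vv V(P)$ and every $c \in C$ there is a (necessarily unique) homomorphism $\alg B_\Delta \to \alg C$ sending $\ol g$ to $c$. Membership in $\vv V(P)$ is immediate, since each factor $\alg A^i_{k,h}$ of $\alg A_\Delta$ lies in $\vv V(P)$ by Lemma \ref{onegenerated2}; generation and uniqueness are automatic from the construction of $\alg B_\Delta$.

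For existence, I would first replace $\alg C$ by $\langle c \rangle$, reducing to the case of a one-generated algebra. Since Wajsberg hoops form a semilinear variety, every subdirectly irreducible member of $\vv V(P)$ is totally ordered, so Theorem \ref{birkhoff}(1) gives a subdirect embedding $\alg C \le_{sd} \prod_{t \in T} \alg C_t$ where each $\alg C_t$ is a one-generated totally ordered member of $\vv V(P)$ with distinguished generator $c_t = \pi_t(c)$. By the universal property of products it suffices to produce, for each $t$, a homomorphism $\varphi_t : \alg B_\Delta \to \alg C_t$ with $\varphi_t(\ol g) = c_t$; their combination maps $\alg B_\Delta$ onto the subalgebra of $\prod_t \alg C_t$ generated by $(c_t)_t$, which is precisely the image of $\alg C$ under the subdirect embedding.

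The heart of the argument is the case analysis for each $\alg C_t$, driven by Lemma \ref{onegenerated2} and Remark \ref{rem:generators}. If $\alg C_t \cong \alg{\L}_k$, then $c_t$ corresponds to some $h$ coprime to $k$ and $\varphi_t := \pi_{(k,h,2)}|_{\alg B_\Delta}$ works; if $\alg C_t \cong \alg{\L}_{k,h}$ with $k \ne 1$, then $c_t \in \{g_{k,h}, \neg g_{k,h}\}$ and the correct projection is $\pi_{(k,h,0)}$ or $\pi_{(k,h,1)}$; when $\alg C_t \cong \alg C_\o$ and $J = \emptyset$, the coordinate $(0,0,3) \in \Delta_K$ supplies the map directly. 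The delicate case is $\alg C_t \cong \alg C_\o$ with $J \ne \emptyset$: here $1$ divides every element of $J$, so $(1,0,1) \in \Delta_J$, and by Remark \ref{rem:generators} the projection sends $\ol g$ to $\neg g_{1,0}$, which generates inside $\alg{\L}_{1,0}$ a subalgebra isomorphic to $\alg C_\o$; composing with the identifying isomorphism yields $\varphi_t$. The main obstacle is precisely this bookkeeping: one has to verify that $\Delta$, together with the choice of preferred generator $\ol g(k,h,i)$ at each coordinate, is a faithful catalogue of all one-generated totally ordered algebras in $\vv V(P)$ indexed by their generators, the non-surjective slot $(1,0,1)$ being the essential device that allows $\alg C_\o$ to be recovered as a subalgebra of $\alg{\L}_{1,0}$ whenever it does not appear explicitly as a factor of $\alg A_\Delta$.
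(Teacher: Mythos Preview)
Your argument via the universal mapping property is correct in outline and is the natural dual of the paper's: the paper shows that every one-variable equation failing in $\vv V(P)$ already fails at $\ol g$, whereas you show that for every one-generated chain $(\alg C_t, c_t)$ in $\vv V(P)$ there is a homomorphism $\alg B_\Delta \to \alg C_t$ sending $\ol g \mapsto c_t$. Both rest on the same case analysis driven by Lemma~\ref{onegenerated2} and Remark~\ref{rem:generators}, so the approaches are essentially the same.

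There is, however, a genuine gap in your case work. When $\alg C_t \cong \alg{\L}_k$ you invoke $\pi_{(k,h,2)}$, but the triple $(k,h,2)$ belongs to $\Delta_I$ only when $k \in I{\downarrow}$; if $k \in J{\downarrow} \setminus I{\downarrow}$ (a case that certainly arises once $J \not\subseteq \{1\}$) there is no such coordinate in $\Delta$ and the projection you name does not exist. The paper handles exactly this situation by lifting the failure in $\alg{\L}_k$ to $\alg{\L}_{k,h}$ via the radical quotient $\alg{\L}_{k,h} \twoheadrightarrow \alg{\L}_k$ and then appealing to the $\alg{\L}_{k,h}$ case. In your formulation the repair is to compose some $\pi_{(k,h',i)}$ with that quotient and verify that the composite sends $\ol g$ to the prescribed generator $h$ of $\alg{\L}_k$; this is precisely the ``bookkeeping'' you allude to in your closing paragraph, but it is not automatic and your write-up skips it for this case. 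A smaller omission: your clause for $\alg{\L}_{k,h}$ explicitly excludes $k = 1$, so $\alg C_t \cong \alg{\L}_{1,0}$ is left uncovered, though it is dispatched immediately by $\pi_{(1,0,0)}$.
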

\begin{proof} First we observe that $\alg A_\Delta \in \vv V(P)$ and so does $\alg B_\Delta$.  Suppose that $p(x) \app q(x)$ is an equation that fails
in $\vv V(P)$; then it must fail in some one-generated totally ordered algebra $\alg C \in \vv V(P)$ and such algebra is either bounded or cancellative.

First, let us show that we only need to discuss the case in which $p(x)\app q(x)$ fails in a generator of $\alg C$. Suppose that the equation fails in some $x\in\alg C$, then, if we call $\alg C'$ the subalgebra of $\alg C$ generated by $x$, we have that $p(x)\app q(x)$ fails in the generator of $\alg C'$, that is still an algebra in $\vv V(P)$.

Suppose that $J = \emptyset$.  If $\alg C$ is bounded, then it cannot be infinite, as $\alg {\L}_n^\infty \notin \vv V(P)$ for any $n \in \mathbb N$. Hence it must be equal to $\alg {\L}_k$
for some $k \in I{\downarrow}$; this implies that $p(x) \app q(x)$ fails in $g(k,h,2)$ for some $h$ and, as above, fails in $\alg B_\Delta$. This covers the case $K=\emptyset$ and half of the case $K \ne \emptyset$. To conclude, if $\alg C$ is cancellative, then the equation must fail in $\alg C_\o$ and hence (if $c$ is the generator of $\alg C_\o$),
$p(c) \ne q(c)$; this implies that $p(\ol{g}(0,0,3))\ne p(\ol{g}(0,0,3))$, so $p(\ol g) \ne q(\ol g)$ and $p(x) \app q(x)$ fails in $\alg B_\Delta$.

Suppose now that $J \ne \emptyset$ (and thus $K = \emptyset$). By Lemma \ref{onegenerated2} we have only three possibilities.

If $\alg C \cong \alg {\L}_k$ for some $k \in I{\downarrow} \cup J\downarrow$; if $k \in I{\downarrow}$ then $p(x) \app q(x)$ fails in some generator of
$\alg {\L}_k$, so it fails in some $\ol{g}(k,h,2)$ for some $h$ and eventually fails in $\alg B_\Delta$. If $k \in J{\downarrow}$ again $p(x) \app q(x)$ fails
in some generator $h$ of $\alg {\L}_k$; now $h$ and $k$ must be relatively prime and thus $p(x) \app q(x)$ fails in $\alg {\L}_{k,h}$.
By Remark \ref{rem:generators} it fails either in $g_{h,k}$ or $\neg g_{h,k}$, hence $p(x) \app q(x)$ fails either in $\ol{g}(h,k,0)$ or
$\ol{g}(h,k,1)$.  In any case $p(x) \app q(x)$  fails in $\alg B_\Delta$.

If $\alg C \cong \alg{\L}_{h,k}$ the argument is similar to the one above, but easier. Finally if $\alg C\cong \alg C_\o$, since $\ol{g}(1,0,1)=\neg g_{1,0}=(1,-1)$ generates a subalgebra of $\alg{\L}_{1,0}$ isomorphic to $C^\omega$, for sure $p(\ol{g}(1,0,1)) \ne q(\ol{g}(1,0,1)) $ and so again $p(x)\app q(x)$   fails in $\alg B$.

We have thus proved that every equation in one variable that fails in $\vv V(P)$ fails in $\alg B_\Delta$. At last, let us show that this is sufficient to say that $\alg B_\Delta \cong \alg F_{\vv V(P)}(x)$. Indeed, since $\alg B_\Delta$ is in $\vv V(P)$ and it is one-generated, we know that we have a surjective homomorphism $\varphi$ from $\alg F_{\vv V(P)}(x)$ to $\alg B_\Delta$; suppose now that $\varphi$ is not injective, this means that Ker($\varphi$) is non-trivial, so there exist two terms $p,q\in\alg F_{\vv V(P)}(x)$ such that $p\neq q$ but $\varphi(p)=\varphi(q)$, thus $p(x)\not\app q(x)$ in $\alg F_{\vv V(P)}(x)$ but $p(x)\app q(x)$ in $\alg B_\Delta$. Hence $\varphi$ must be an isomorphism.
\end{proof}

\subsection{Wajsberg functions}

A \textbf{McNaughton function} over the $n$-cube is a continuous function $f:[0,1]^n\to[0,1]$ such that there exist finitely many linear functions $f_1,\dots,f_k$, where each $f_i$ is of the form $f_i=a_i^0 x_0+a_i^1 x_1+\dots+a_i^n x_n+b_i$ with $a_i^0\dots a_i^n,b_i$ integers, and such that for any $v\in[0,1]^n$ there exists $i\in\{1,\dots k\}$ with $f(v)=f_i(v)$.  A McNaughton function $f(\vuc xn)$ is a {\bf Wajsberg function} if
$f(1,1,\dots,1) = 1$.

\begin{theorem}\label{Wajsberg functions} \cite{AglianoPanti1999} For each $n$, the free $n$-generated Wajsberg hoop $\alg F_{\WH}(n)$ is isomorphic to the algebra of all Wajsberg functions over the $n$-cube, where the operations are defined pointwise.
\end{theorem}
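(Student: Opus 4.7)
The plan is to construct an isomorphism $\Phi:\alg F_{\WH}(n)\to\alg W_n$, where $\alg W_n$ denotes the algebra of Wajsberg functions on $[0,1]^n$, by extending the assignment $x_i\mapsto\pi_i$ that sends the $i$-th free generator to the $i$-th coordinate projection $\pi_i:[0,1]^n\to[0,1]$; the hard part will be to prove surjectivity, which should reduce to McNaughton's classical representation theorem for $\mathsf{MV}$-algebras.

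First I would verify that $\alg W_n$ is itself a Wajsberg hoop. The class of McNaughton functions is closed under pointwise $\meet,\imp,\cdot$, since the operations of $[0,1]_{\WH}$ are themselves piecewise linear with integer coefficients, and each such operation trivially preserves the condition $f(1,\ldots,1)=1$. Each projection $\pi_i$ lies in $\alg W_n$, so by the universal property of the free algebra there is a unique homomorphism $\Phi:\alg F_{\WH}(n)\to\alg W_n$ with $\Phi(x_i)=\pi_i$.

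For injectivity I would establish $\VV([0,1]_{\WH})=\WH$. By Theorem \ref{aglianopanti}, any proper subvariety of $\WH$ omits some $\alg{\L}_k$, some $\alg{\L}_{k,h}$, or $\alg C_\omega$; but every $\alg{\L}_k$ sits inside $[0,1]_{\WH}$ as the rational chain $\{0,1/k,\ldots,1\}$, while $\alg C_\omega$ and the non-standard chains $\alg{\L}_{k,h}$ embed into ultrapowers of $[0,1]_{\WH}$ (for $\alg C_\omega$ one can use the class of the sequence $(1-1/m)_m$, whose iterated products form a strictly descending chain of distinct Wajsberg hoop elements). Hence $[0,1]_{\WH}$ sits in no proper subvariety of $\WH$. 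If $\Phi(p)=\Phi(q)$ then $p$ and $q$ agree on every evaluation in $[0,1]^n$, so $[0,1]_{\WH}\vDash p\app q$, hence $\WH\vDash p\app q$, so $p=q$ in $\alg F_{\WH}(n)$.

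The main obstacle is surjectivity. By McNaughton's classical theorem for $\mathsf{MV}$-algebras, every McNaughton function admits a representation $f=t(\pi_1,\ldots,\pi_n)$ for some $\mathsf{MV}$-term $t$, so every Wajsberg function does as well---but the term $t$ may use the constant $0$, which is not available in a Wajsberg hoop term in the projections (any such term evaluates to $1$ at $(1,\ldots,1)$). The constraint $f(1,\ldots,1)=1$ places $f$ in the maximal filter of the free $\mathsf{MV}$-algebra corresponding to evaluation at $(1,\ldots,1)$, and the content of the theorem is that this filter coincides with the Wajsberg hoop subreduct generated by the projections. One inclusion is immediate; the converse requires eliminating $0$ from $t$, typically by induction on the complexity of the McNaughton presentation of $f$, with the hypothesis $f(1,\ldots,1)=1$ used at each step to guarantee that clipping against $0$ can be replaced by a Wajsberg hoop operation. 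This elimination procedure is the technical heart of the argument from \cite{AglianoPanti1999} and cannot be avoided.
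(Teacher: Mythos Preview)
The paper does not give a proof of this theorem; it is stated with a citation to \cite{AglianoPanti1999} and used as a black box. So there is no in-paper argument to compare against, and your outline is really being measured against the original source. On that score the overall architecture is correct: define $\Phi$ by the universal property, prove injectivity by showing $[0,1]_{\WH}$ generates $\WH$, and reduce surjectivity to a $0$-elimination result on top of McNaughton's theorem. Your honest admission that the elimination step is the technical core and that you are not reproducing it means the proposal is a sketch, not a proof; as written it stops exactly where the difficulty begins.

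One point to flag in the injectivity step: you invoke Theorem~\ref{aglianopanti} (the classification of proper subvarieties by reduced triples) to argue that $\VV([0,1]_{\WH})=\WH$. Since that classification is also proved in \cite{AglianoPanti1999}, and the paper explicitly remarks that the one-variable axiomatization there ``uses the functional characterization of free Wajsberg hoops,'' you risk circularity by leaning on it here. A safer route is the direct one: $\WH$ is the variety of $0$-free subreducts of Wajsberg/MV-algebras, Chang's completeness gives $\VV([0,1]_{\mathsf{MV}})=\mathsf{MV}$, and hence any $0$-free identity failing in $\WH$ already fails in $[0,1]$. That establishes injectivity without touching the subvariety lattice at all, and isolates surjectivity (the $0$-elimination for Wajsberg functions) as the single nontrivial ingredient you still owe.
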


So we can always identify an  $n$-ary term in the language of Wajsberg hoops with a  Wajsberg function over the $n$-cube. Conversely, given a Wajsberg function over the $n$-cube, we can associate to it an equivalence class of Wajsberg terms (where the equivalence is of course mutual provability in the theory).
With the usual abuse of notation we will identify the class with any of its representatives, i.e. given a Wajsberg function $f$ we will denote by $\widehat f$ the Wajsberg term which is a representative to the equivalence class corresponding to $f$.

In \cite{AglianoPanti1999} the authors used this representation to give an easy way to axiomatize all proper subvarieties of Wajsberg hoops. Let $(I,J,K)$ be a reduced triple, we define two finite subsets $\mathcal{I},\mathcal{J}$ of rational points of $[0,1]$ as
\begin{enumerate}
    \ib if $K\neq\emptyset$ then $\mathcal{J}=\{1\}$;
    \ib if $K=\emptyset$ then $\mathcal{J}=\{v\in[0,1] : \text{den}(v)\in J{\downarrow}\}$;
    \ib $\mathcal{I}=\{u\in[0,1] : \text{den}(u)\in I{\downarrow}\}\backslash\mathcal{J}$.
\end{enumerate}
Given a reduced triple $(I,J,K)$ an {\bf $(I,J,K)$-comb} is any $\alpha\in\alg F_{\WH}(x)$ such that
\begin{enumerate}
    \item for every $v\in\mathcal{J}$, there exists a neighborhood $V$ of $v$ such that $\alpha=1$ on $V$;
    \item for every $u\in\mathcal{I}$, $\alpha(u)=1$;
    \item for every $u\in\mathcal{I}$ there exists $v\in\mathcal{I}$ such that den($v$)$|$den($u$) and $\alpha$ is not identically $1$ on any neighborhood of $v$;
    \item if $d\notin (I\cup J){\downarrow}$, then there exists $0\leq h<d$ with $\alpha(h/d)\neq 1$.
\end{enumerate}

\begin{theorem}\label{combs}\cite{AglianoPanti1999}
Let $P=(I,J,K)$ be a reduced triple and let $\alpha(x)\in\alg F_{\WH}(x)$. Then the identity $\alpha(x)=1$ axiomatizes $\vv V(P)$ relative to $\WH$ if and only if $\alpha$ is an $(I,J,K)$-comb.
\end{theorem}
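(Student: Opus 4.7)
The plan is to use the explicit description of the free algebra $\alg F_{\vv V(P)}(x)$ given by Theorem \ref{thm:bdelta} together with the Wajsberg function representation of Theorem \ref{Wajsberg functions}, thereby reducing the axiomatization condition to a combinatorial statement about $\alpha$ viewed as a piecewise linear function on $[0,1]$.

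First I would reduce the statement to checking the equation on one-generated totally ordered members of $\WH$. Since $\alpha(x)\app 1$ is an equation in a single variable, it holds in a subvariety $\vv V\sse\WH$ iff it holds in $\alg F_\vv V(x)$; by representability of Wajsberg hoops, this occurs iff $\alpha$ evaluates to $1$ on every one-generated totally ordered algebra in $\vv V$. For $\vv V=\vv V(P)$, Lemma \ref{onegenerated2} (and Lemma \ref{onegenerated} in the $J=\emptyset$ case) classifies these algebras as $\alg{\L}_k$ for $k\in I{\downarrow}\cup J{\downarrow}$, $\alg{\L}_{k,h}$ for $k\in J{\downarrow}$ with $\gcd(h,k)=1$, and $\alg C_\omega$ when $J\ne\emptyset$ or $K\ne\emptyset$. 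Equivalently, by Theorem \ref{thm:bdelta}, the equation holds in $\vv V(P)$ iff $\alpha(\ol g(k,h,i))=1$ in $\alg A^i_{k,h}$ for each $(k,h,i)\in\Delta$. Hence $\alpha(x)\app 1$ axiomatizes $\vv V(P)$ relative to $\WH$ iff the list of one-generated totally ordered chains on which $\alpha$ is identically $1$ equals the one produced by Lemma \ref{onegenerated2}.

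The core translation step is a dictionary between such evaluations and the behavior of $\alpha$ as a Wajsberg function on $[0,1]$. For $\alg{\L}_k$, $\alpha\app 1$ holds iff $\alpha(h/k)=1$ at every $h/k$ of denominator dividing $k$. For $\alg{\L}_{k,h}=\Gamma(\mathbb Z\lextimes\mathbb Z,(k,h))$, the generators $g_{k,h}$ and $\neg g_{k,h}$ lie lexicographically on opposite sides of the rational $h/k$, so by the piecewise linearity of $\alpha$ with integer coefficients, $\alpha\app 1$ holds there iff $\alpha$ is identically $1$ on a two-sided neighborhood of $h/k$. For $\alg C_\omega$, the canonical generator sits lexicographically just inside a distinguished boundary point (inherited from its embedding into $\alg{\L}_{1,0}$ as described in Remark \ref{rem:generators}), so $\alpha\app 1$ holds iff $\alpha$ is identically $1$ on a one-sided neighborhood of that point. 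With this dictionary, the four clauses of the $(I,J,K)$-comb definition match up one-to-one with the semantic constraints: clause (2) encodes pointwise vanishing at the rationals in $\mathcal I$; clause (1) encodes the neighborhood condition at the rationals in $\mathcal J$, which covers both $\alg{\L}_{k,h}$ with $k\in J{\downarrow}$ and $\alg C_\omega$ when $K\ne\emptyset$; clause (4) forces the equation to \emph{fail} on $\alg{\L}_d$ whenever $d\notin(I\cup J){\downarrow}$, preventing axiomatization of a strictly larger variety; and clause (3) is a minimality constraint preventing $\alpha$ from accidentally vanishing on an infinite chain that would make the axiomatized variety strictly smaller than $\vv V(P)$.

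The main obstacle is to verify the second entry of the dictionary rigorously: that $\alpha(g_{k,h})=1$ in $\alg{\L}_{k,h}$ is equivalent to $\alpha\equiv 1$ on a one-sided neighborhood of $h/k$, with $\neg g_{k,h}$ contributing the other side. This rests on an explicit local analysis via the $\Gamma$-construction: a Wajsberg function $\alpha$ is locally a linear polynomial with integer slopes, and infinitesimal lexicographic perturbations of $h/k$ evaluate in $\alg{\L}_{k,h}$ to exactly those second coordinates determined by these local pieces; they land at the top $(k,h)$ precisely when $\alpha$ is locally constantly $1$ near $h/k$. Once this geometric fact is established, the two implications of the theorem reduce to routine bookkeeping that matches the four comb clauses against the classification of one-generated totally ordered algebras in $\vv V(P)$.
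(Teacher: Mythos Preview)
The paper does not prove this theorem at all: it is quoted verbatim from \cite{AglianoPanti1999} and stated without proof, so there is no ``paper's own proof'' to compare against. Your outline is a plausible reconstruction of the original Aglian\`o--Panti argument, and the overall architecture---reduce to one-generated chains, classify them, and translate evaluation in $\alg{\L}_k$, $\alg{\L}_{k,h}$, $\alg C_\omega$ into pointwise and local behaviour of the Wajsberg function $\alpha$ on $[0,1]$---is exactly the right one. Note that invoking Theorem~\ref{thm:bdelta} is unnecessary overhead: what you actually use is the classification in Lemmas~\ref{onegenerated} and~\ref{onegenerated2}, and the ``dictionary'' you describe is (one direction of) Lemma~\ref{identities}, whose proof in \cite{AglianoPanti1999} is intertwined with the proof of the present theorem.

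There is one genuine slip. Your reading of clause~(3) is backwards. You say it ``prevent[s] $\alpha$ from accidentally vanishing on an infinite chain that would make the axiomatized variety strictly \emph{smaller} than $\vv V(P)$.'' In fact clause~(3) prevents the axiomatized variety from being strictly \emph{larger}. If it failed at some $u\in\mathcal I$ with $\op{den}(u)=k$, then $\alpha$ would be identically $1$ on a neighbourhood of every rational of denominator dividing $k$ (combining the failure of (3) with clause~(1) on $\mathcal J$), and hence $\alg{\L}_k^\infty\vDash\alpha\app 1$; but $u\in\mathcal I$ forces $k\notin J{\downarrow}$, so $\alg{\L}_k^\infty\notin\vv V(P)$, contradicting that $\alpha\app 1$ axiomatizes $\vv V(P)$. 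With that correction, and once you have established the \emph{two-sided} version of the dictionary (Lemma~\ref{identities} only records the direction ``local equality of Wajsberg functions $\Rightarrow$ equality in the chain''; you also need the converse, which follows from the integrality of the linear pieces), the bookkeeping goes through.
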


This is a very powerful result, in that it gives a procedure that allows to axiomatize every proper
subvariety of Wajsberg hoops, and combs are quite easy to construct.
Next, we have a very useful lemma.

\begin{lemma}\label{identities} Let $p(x) \app q(x)$ be an identity in the language of Wajsberg hoops and let $f,g$ be Wajsberg functions such that
$p = \widehat f$ and $q = \widehat g$. Then for any $n,k \in \mathbb N$ with $ k \le n$
\begin{enumerate}
\item if $f(\frac{k}{n})= g(\frac{k}{n})$, then $p(k) = q(k)$ where $k \in \alg {\L}_n$;
\item if $f(x) = g(x)$ in a neighborhood of $1$, then $\alg C_\o \vDash p(x)\app q(x)$;
\item if $f(x) = g(x)$ in a neighborhood of $\frac{k}{n}$, then $p(c) =q(c)$ for any $c \in\alg {\L}_{n,h}$ such that $c/\op{Rad}(\alg {\L}_{n,h}) = k$.
\end{enumerate}
\end{lemma}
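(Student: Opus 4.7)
The plan is to exploit the functional representation of $\alg F_{\WH}(x)$ given by Theorem~\ref{Wajsberg functions}: each part amounts to identifying a specific homomorphism from $\alg F_{\WH}(x)$ into the target Wajsberg chain, describing its action on $\widehat f$ in terms of $f$, and reading off the conclusion from the stated hypothesis on $f$ and $g$.

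For (1) I would identify $\alg{\L}_n$ with $\{j/n : 0 \le j \le n\} \subseteq [0,1]$ carrying the induced Wajsberg hoop operations (a rescaled $\Gamma(\mathbb Z,n)$). The unique homomorphism $\alg F_{\WH}(x) \to \alg{\L}_n$ sending $x \mapsto k$ is then pointwise evaluation $\widehat f \mapsto f(k/n)$, which is well defined because a McNaughton function with integer coefficients sends rationals of denominator dividing $n$ to rationals of denominator dividing $n$. The hypothesis gives $p(k) = q(k)$ directly.

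For (2) the key observation is that the assignment $\widehat f \mapsto a^{s_f}$, where $s_f$ is the left-slope of $f$ at $1$ (uniquely determined by $f(v) = 1 - s_f(1-v)$ on a left neighborhood of $1$), is a homomorphism $\alg F_{\WH}(x) \to \alg C_\o$; one verifies this by a routine case check showing how the slope transforms under $\cdot, \imp, \vee, \wedge$ and matching with the operations of $\alg C_\o$. Since this homomorphism sends $x$ to $a$ it is the unique such, and from $f = g$ near $1$ we get $s_f = s_g$, hence $p(a) = q(a)$. For an arbitrary $a^m \in \alg C_\o$, I would precompose with the endomorphism $x \mapsto x^m$ of $\alg F_{\WH}(x)$: the corresponding Wajsberg function $v \mapsto \op{max}(0, mv - (m-1))$ sends a left neighborhood of $1$ into a left neighborhood of $1$, so the hypothesis transfers and $p(a^m) = q(a^m)$ follows, giving $\alg C_\o \vDash p \app q$.

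Part (3) is the local analogue of (2) at $k/n$ instead of at $1$, and is where I expect the main obstacle. Since $f$ and $g$ are McNaughton, agreement on a neighborhood of $k/n$ means they coincide on some (possibly smaller) neighborhood with a common integer-affine function $\ell(v) = \alpha v + \beta$. For $c \in \alg{\L}_{n,h}$ with $c/\op{Rad}(\alg{\L}_{n,h}) = k$, the element $c$ is an infinitesimal perturbation of $k/n$ in $\Gamma(\mathbb Z \lextimes \mathbb Z, (n,h))$; the plan is to show directly that $\widehat f(c) = \alpha c + \beta(n,h) = \widehat g(c)$. The delicate point is that $\widehat f$ is built up from many $\min/\max$ clippings, and one must verify that every such clipping is resolved at $c$ exactly as it is at nearby standard rationals where $f = \ell$, so that only the germ of $f$ at $k/n$ contributes to the evaluation. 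This follows because the coset condition $c/\op{Rad} = k$ guarantees that the lex comparison in $\mathbb Z \lextimes \mathbb Z$ agrees, at each step of the computation, with the ordering of the corresponding rationals near $k/n$; the boundary cases $k \in \{0,n\}$ are treated analogously using the top and bottom cosets of the radical.
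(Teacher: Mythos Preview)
Your approach is sound and follows the natural line: for each target chain, describe the evaluation homomorphism out of $\alg F_{\WH}(x)$ explicitly in terms of the Wajsberg function and read off the conclusion. The paper does not actually prove this lemma; it only remarks that the argument can be extracted from the proof of Theorem~3.3 in \cite{AglianoPanti1999} by taking $\kappa = 1$, so there is no in-paper proof to compare against. Parts~(1) and~(2) of your sketch are correct as written.

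There is one small slip in part~(3). You assert that agreement of $f$ and $g$ on a neighborhood of $k/n$ forces them to coincide there with a \emph{single} integer-affine function $\ell$. That need not hold: a Wajsberg function can have a node precisely at $k/n$ (for instance $\min(2v,1)$ at $v=\tfrac12$), so the common germ may consist of two affine pieces, one on each side. The repair is exactly what your next paragraph already gestures at: for $c=(k,s)\in\alg{\L}_{n,h}$ the sign of $s$ determines whether the lex-order comparisons arising in the computation of $\widehat f(c)$ track the right-hand or the left-hand linear piece of $f$ at $k/n$, and since $f$ and $g$ share both pieces the conclusion follows in either case. If you prefer to avoid this case analysis, you can instead embed $\alg{\L}_{n,h}$ into an ultraproduct $\prod_U \alg{\L}_{nm+h}$ via $(r,s)\mapsto (rm+s)_m/U$; then $c$ corresponds to a sequence of rationals tending to $k/n$, and part~(1) applied coordinatewise yields $\widehat f(c)=\widehat g(c)$ directly.
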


The proof of Lemma \ref{identities} can be extracted from the proof of Theorem 3.3 in \cite{AglianoPanti1999}, by setting $\kappa =1$.

Next, we want to have an easy way to construct Wajsberg functions and force them to have certain fixed values (see \cite{Jerabek2010}).  If $0=t_0<t_1<\dots<t_k=1$ and $x_0,\dots,x_k\in[0,1]$, then we denote by $f=L(t_0,x_0;\dots;t_k,x_k)$ the continuous picewise linear function $f:[0,1]\to[0,1]$ such that $f(t_i)=x_i$ and $f$ is linear on each interval $[t_i,t_{i+1}]$; in other words, $f$ is the linear interpolation between the nodes $(t_0,x_0),\dots,(t_k,x_k)$.
Clearly it is possible to play with the variables in order to make $f$ a Wajsberg function.

Let's see an example. If we want to find a $(2,\emptyset,\emptyset)$-comb  we need to take a function $f$ that has value $1$ only in $\{0,\frac{1}{2},1\}$, so we can take
$$
f=L(0,1;\frac{1}{4},0;\frac{1}{2},1;\frac{3}{4},0;1,1).
$$
This function has integer coefficients in every interval and $f(1)=1$, so it is a Wajsberg function and the identity $f(x)\app 1$ axiomatizes the subvariety of Wajsberg hoops generated by $\alg {\L}_2$.

Now we will heavily use Wajsberg functions and lemma \ref{identities} to prove a couple of fundamental results. From now on, for any reduced triple $P=(I,J,K)$, $\alg B_\Delta$ will be the algebra constructed from $P$ following the directions in Section \ref{bdelta}.

\begin{theorem}\label{embed1} Let $P=(I,J,K)$ be a reduced triple and let $a \in I$; then $\alg {\L}_a$ is embeddable in $\alg B_\Delta$.
\end{theorem}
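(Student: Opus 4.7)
The plan is to exhibit an element $e \in \alg B_\Delta$ whose projection on a single coordinate of $\alg A_\Delta$ is a generator of $\alg{\L}_a$ while every other projection is the top element of the respective factor. Since the top element is absorbent/neutral for all operations, the subalgebra $\la e \ra$ of $\alg A_\Delta$ then coincides with the subalgebra of $\alg{\L}_a$ generated by the chosen generator, which is all of $\alg{\L}_a$; because $e \in \alg B_\Delta$, this yields $\alg{\L}_a \cong \la e \ra \le \alg B_\Delta$ as desired.

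To pick the coordinate, choose $h_0 = 1$ if $a \ge 2$ and $h_0 = 0$ if $a = 1$. In either case $\gcd(h_0, a) = 1$, and since $a \in I \sse I{\downarrow}$, the triple $(a, h_0, 2)$ belongs to $\Delta_I$; the projection $\ol g(a, h_0, 2) = h_0$ is a generator of $\alg{\L}_a$ by Remark \ref{rem:generators}. The element $e$ will have the form $\widehat f(\ol g)$ for a Wajsberg function $f : [0,1] \to [0,1]$ chosen so that $f(h_0/a) = h_0/a$ and so that $f \equiv 1$ on a neighborhood of every point in the finite set
$$
S = \{h/k : (k,h,2) \in \Delta_I,\ (k,h) \ne (a,h_0)\}\ \cup\ \{r/k,\,(k-r)/k : (k,h,i) \in \Delta_J\}\ \cup\ \{1\},
$$
where $r$ denotes the first coordinate of $g_{k,h} \in \mathbb{Z}\lextimes\mathbb{Z}$. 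The key observation is that $h_0/a$ does not belong to $S$: for $\Delta_I$ points this is immediate from uniqueness of reduced fractions, and for $\Delta_J$ points, since $P$ is reduced, no element of $I$ divides any element of $J$, so $a \nmid k$ for every $k \in J{\downarrow}$, and hence neither $r/k$ nor $(k-r)/k$ in reduced form can have denominator $a$.

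Such an $f$ is constructed by linear interpolation: for $a \ge 2$ take
$$
f = L\bigl(0,\,1;\ \tfrac{1}{a} - \epsilon,\,1;\ \tfrac{1}{a},\,\tfrac{1}{a};\ \tfrac{1}{a} + \epsilon,\,1;\ 1,\,1\bigr),
$$
with $\epsilon = (a-1)/(aN)$ for $N \in \mathbb{N}$ large, which makes the slopes $\pm N$ integers and forces the V-interval $[\tfrac{1}{a}-\epsilon,\tfrac{1}{a}+\epsilon]$ to be disjoint from $S$. The case $a = 1$ is handled analogously by $f = L(0,\,0;\ 1/N,\,1;\ 1,\,1)$. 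Lemma \ref{identities} then delivers the projections of $e = \widehat f(\ol g)$: part (1) gives $h_0$ at $(a, h_0, 2)$ and the top at every other $(k,h,2) \in \Delta_I$; part (3) gives the top at every $(k,h,0)$ and $(k,h,1)$ in $\Delta_J$, since $f \equiv 1$ in a neighborhood of $g_{k,h}/\op{Rad}(\alg{\L}_{k,h}) = r/k$ and of $\neg g_{k,h}/\op{Rad}(\alg{\L}_{k,h}) = (k-r)/k$; and part (2) gives the top at $(0,0,3)$ whenever $\Delta_K \ne \emptyset$, since $f \equiv 1$ near $1$. Hence $\la e \ra \cong \alg{\L}_a$, completing the embedding. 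The principal technical obstacle is the simultaneous arrangement of integer slopes for $f$ and disjointness of the V-interval from the finitely many remaining points of $S$; both are secured by letting $N$ be sufficiently large.
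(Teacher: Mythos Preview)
Your strategy mirrors the paper's: isolate a single coordinate of $\alg A_\Delta$ by applying a one-variable Wajsberg term to $\ol g$. However, your specific choice of $f$ for $a \ge 2$ is not a Wajsberg function. A McNaughton function requires \emph{both} the slopes \emph{and} the constant terms of each linear piece to be integers. Your function
\[
f = L\bigl(0,1;\ \tfrac{1}{a}-\epsilon,1;\ \tfrac{1}{a},\tfrac{1}{a};\ \tfrac{1}{a}+\epsilon,1;\ 1,1\bigr),\qquad \epsilon = \tfrac{a-1}{aN},
\]
does have integer slopes $\pm N$, but the two nonconstant pieces are $y = -Nx + (N+1)/a$ and $y = Nx + (1-N)/a$. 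For both constant terms to be integers one needs $a \mid N+1$ and $a \mid N-1$ simultaneously, hence $a \mid 2$; so for every $a \ge 3$ your $f$ is never a McNaughton function, no matter how large $N$ is, and $\widehat f$ does not exist. Your remark that ``the principal technical obstacle is the simultaneous arrangement of integer slopes'' overlooks the intercepts.

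The paper avoids this by putting the tip of the V at height $0$ rather than $1/a$: with $m = \prod(I\cup J)$ it uses
\[
f = L\bigl(0,1;\ \tfrac{1}{a}-\tfrac{1}{2m},1;\ \tfrac{1}{a},0;\ \tfrac{1}{a}+\tfrac{1}{2m},1;\ 1,1\bigr),
\]
whose nonconstant pieces have intercepts $\pm 2m/a \in \mathbb Z$ because $a \mid m$. This makes $f(\ol g)$ equal to $0$ at the coordinate $(a,1,2)$ and top elsewhere; since $0$ alone does not generate $\alg{\L}_a$ for $a\ge 2$, the paper then passes to $\ol g \join f(\ol g)$, which has the generator $1\in\alg{\L}_a$ at $(a,1,2)$ and top elsewhere. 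Your argument is easily repaired either by adopting this two-step trick, or by allowing asymmetric slopes $-N_1$, $N_2$ with $N_1 \equiv -1 \pmod a$ and $N_2 \equiv 1 \pmod a$, which makes both intercepts integral while still letting you shrink the V-interval.
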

\begin{proof} If $1 \in I$, then $P = (\{1\},\emptyset, K)$ and if $K = \emptyset$, then $\alg B_\Delta \cong \alg {\L}_1$. So suppose that $K \ne \emptyset$; then by Theorem \ref{thm:bdelta} $\alg B_\Delta$ is isomorphic with the subalgebra of $\alg L_1 \times \alg C_\o$ generated by $(0,c)$.  Consider the Wajsberg function
$$
f(x) = L(0,0;\dfrac{1}{2},1;1,1);
$$
then it is easy to check that $\widehat{f} = (x \imp x^2) \imp x$. Now $f(0)=0$ and $f(1) =1$ in a neighborhood of $1$, hence by lemma \ref{identities} $f(\ol{g}) = (0,1)$ and thus it generates a subalgebra of $\alg B_\Delta$ isomorphic with $\alg L_1$.

Now suppose that $1 \notin I$;  we fix an $a \in I$ and we let $m$  be the product of all the elements of $I \cup J$. Then we consider the Wajsberg function
$$
f(x) = L(0,1;\dfrac{1}{a}-\dfrac{1}{2m},1;\dfrac{1}{a},0;\dfrac{1}{a}+\dfrac{1}{2m},1;1,1).
$$
Now clearly $f(x) = 1$ in any neighborhood of $\frac{n}{m}$ where $n\le m$ and $\frac{n}{m}\ne\frac{1}{a}$ and moreover $f(\dfrac{1}{a}) =0$.  Since $(I,J,K)$ is reduced, $i$ does not divide any element of $I \cup J$ and so
by Lemma \ref{identities}
$$
f(\ol{g}(k,h,i)) = \left\{
                     \begin{array}{ll}
                       0, & \hbox{if $(k,h,i)=(a,1,2)$;} \\
                       1, & \hbox{otherwise.}
                     \end{array}
                   \right.
$$
Hence $\ol{g} \join f(\ol{g})$ generates a subalgebra of $\alg B_\Delta$, isomorphic to the one generated by $\ol{g}(a,1,2)$. But the latter is a generator of $\alg {\L}_a$ so $\alg{\L}_a$ is embeddable in $\alg B_\Delta$, as wished.
\end{proof}

\begin{theorem}\label{embed2} Let $P=(I,\emptyset,\alg C_\o)$; then $\alg C_\o$ is embeddable in $\alg B_\Delta$.
\end{theorem}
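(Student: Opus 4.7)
The plan mirrors the proof of Theorem \ref{embed1}. Since $J=\emptyset$ and $K=\{\o\}$, we have $\Delta=\Delta_I\cup\{(0,0,3)\}$, and by Theorem \ref{thm:bdelta} the algebra $\alg B_\Delta$ is the subalgebra of $\prod_{(k,h,2)\in\Delta_I}\alg{\L}_k\times\alg C_\o$ generated by $\ol g$, where $\ol g(k,h,2)=h$ and $\ol g(0,0,3)=c$, with $c$ the generator of $\alg C_\o$. My aim is to exhibit a single $b\in\alg B_\Delta$ whose projection on each $\alg{\L}_k$ factor equals $1$ while its projection on $\alg C_\o$ differs from $1$; Lemma \ref{cond1}(3) will then give $\la b\ra\cong\alg C_\o$, completing the embedding. (The case $I=\emptyset$ is immediate since then $\alg B_\Delta\cong\alg C_\o$.)

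Assume $I\ne\emptyset$ and set $m=\prod_{i\in I}i$. For every $(k,h,2)\in\Delta_I$, $k\in I{\downarrow}$, so $k\mid m$ and $h/k$ coincides with $(hm/k)/m$; in particular each such $h/k$ is one of the grid points $j/m$ with $0\le j\le m$. I will construct a Wajsberg function $f$ that equals $1$ at every such node but is not identically $1$ on any neighborhood of $1$. A natural candidate is the sawtooth
\[
f=L\Bigl(0,1;\,\tfrac{1}{2m},0;\,\tfrac{1}{m},1;\,\tfrac{3}{2m},0;\,\tfrac{2}{m},1;\,\dots;\,\tfrac{2m-1}{2m},0;\,1,1\Bigr),
\]
each piece of which has integer slope $\pm 2m$ and integer intercept, and which satisfies $f(1)=1$.

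With this $f$, Lemma \ref{identities}(1) yields $\widehat f(h)=1$ in $\alg{\L}_k$ for every $(k,h,2)\in\Delta_I$, because $f(h/k)=1$. On the $\alg C_\o$-coordinate, $f$ takes value $0$ at $(2m-1)/(2m)$, so it is not identically $1$ on any neighborhood of $1$; the contrapositive of Lemma \ref{identities}(2) then forces $\widehat f(c)\ne 1$ in $\alg C_\o$. Hence $b:=\widehat f(\ol g)$ lies in $\{1\}^{|\Delta_I|}\times\alg C_\o\sse\alg B_\Delta$ and has a non-identity entry on the $\alg C_\o$-coordinate, so by Lemma \ref{cond1}(3) the subalgebra of $\alg B_\Delta$ generated by $b$ is isomorphic to $\alg C_\o$, as required.

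The delicate point is designing $f$ so that three things hold simultaneously: integer coefficients (so $f$ really is a Wajsberg function), absorption of every fraction $h/k$ coming from $\Delta_I$ into the grid $\{j/m\}$ (secured by the divisibility $k\mid m$ built into $I{\downarrow}$), and failure of $f$ to be locally constant at $1$ (secured by inserting the final descending–ascending tooth just before $1$). Everything else is a routine application of the ``pointwise'' dictionary between Wajsberg functions and Wajsberg terms provided by Lemma \ref{identities}.
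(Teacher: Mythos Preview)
Your overall strategy matches the paper's: find a unary Wajsberg term $t$ such that $t(\ol g)$ projects to $1$ on every $\alg{\L}_k$-coordinate and to a non-identity element on the $\alg C_\o$-coordinate, then invoke Lemma \ref{cond1}(3). The paper achieves this with the simpler function $f(x)=L(0,1;\tfrac{m-1}{m},1;\tfrac{m}{m+1},\tfrac{m}{m+1};1,1)$, whose term is $x^m\imp x^{m+1}$; its advantage is that $f(x)=x$ on a neighbourhood of $1$, so Lemma \ref{identities}(2) applies \emph{directly} (with $q(x)=x$) to give $\widehat f(c)=c\ne 1$ in $\alg C_\o$.

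Your sawtooth also works, but the step ``the contrapositive of Lemma \ref{identities}(2) then forces $\widehat f(c)\ne 1$'' is a logical slip. Lemma \ref{identities}(2) says: if $f=g$ near $1$ then $\alg C_\o\vDash p\app q$. Its contrapositive runs \emph{from} failure in $\alg C_\o$ \emph{to} disagreement near $1$; you need the converse direction, which the lemma does not state. The repair is easy and stays within the lemma as written: on $[\tfrac{2m-1}{2m},1]$ your $f$ equals $2mx-(2m-1)$, which coincides near $1$ with the Wajsberg function of the term $x^{2m}$. Applying Lemma \ref{identities}(2) to the pair $(\widehat f,\,x^{2m})$ yields $\widehat f(c)=c^{2m}$ in $\alg C_\o$, and $c^{2m}\ne 1$ since $\alg C_\o$ is cancellative. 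With this correction your argument is complete, though the paper's choice of $f$ avoids the extra step.
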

\begin{proof} If $I =\emptyset$, then $\alg B_\Delta \cong \alg C_\o$. Otherwise let $m$ be the product of all elements of $I$ and consider the Wajsberg function
$$
f(x) = L(1,1; \dfrac{m-1}{m},1;\dfrac{m}{m+1},\dfrac{m}{m+1};1,1).
$$
Again it is easy to see that $\widehat{f}(x) = x^m \imp x^{m+1}$ and that $f(\frac{n}{m})=1$ for $\dfrac{n}{m}\ne 1$ so that $f(\ol{g}(k,h,i)) = 1$ whenever $i=2$.
In a neighborhood of $1$ we have that $f(x)=x$, so $f(\ol{g}(0,0,3)) = c$, which generates $\alg C_\o$; thus $f(\ol{g})$ generates a subalgebra of $\alg B_\Delta$ that is isomorphic with $\alg C_\o$ and
thus $\alg C_\o$ is embeddable in $\alg B_\Delta$.
\end{proof}

\section{Structurally complete extensions}

By Theorem \ref{main1} and \ref{main2} classifying all the structurally complete finitary (axiomatic) extensions of positive \L ukasiewicz's Logic amounts to describing all the structural quasivarieties (varieties) of Wajsberg hoops.

\subsection{Structural subvarieties}

The {\bf radical} of a  Wajsberg chain $\alg A$, in symbols $\op{Rad}(\alg A)$, is the intersection of the maximal filters of $\alg A$; it is easy to see that $\op{Rad}(\alg A)$ is cancellative and  $\alg A$ is cancellative if and only if $\op{Rad}(\alg A) = \alg A$.  We say that a bounded Wajsberg hoop $\alg A$ {\bf has rank $n$}, if $\alg A/\op{Rad}(\alg A) \cong \alg{\L}_n$. For any bounded Wajsberg hoop  the {\bf divisibility index} $d_\alg A$ of $\alg A$, is the maximum $k$ such that $\alg{\L}_k$ is embeddable in $\alg A$ if any, otherwise $d_\alg A = \infty$.

 Here is a summary of the main results about the rank and the divisibility index; the proofs are either trivial or can be found in  \cite{AglianoMontagna2003} or \cite{Gispert2002}.

\begin{lemma}\label{whproperties} For any $n,k \ge 1$
\begin{enumerate}
\item $\alg {\L}_n$ is simple and $\alg{\L}_n \in \II\SU(\alg{\L}_k)$ if and only if $\alg {\L}_n \in \II\SU(\alg {\L}_k^\infty)$ if and only if $n \mathrel{|} k$.
\item $\alg {\L}_n$ has rank $n$ and divisibility index $n$.
\item For any $k\ge 0$, $\alg {\L}_{n,k}$ is subdirectly irreducible,  $\alg{\L}_{n,k}$ has rank $n$ and $d_{\alg {\L}_{n,k}} = \gcd(n,k)$; in particular $d_{\alg {\L}^\infty_n} = n$.
\item If $\alg A$ has rank $n$, then $\alg A \in \II\SU\PP_u(\alg {\L}_{n,k})$ if and only if $d_\alg A$ divides $\gcd(n,k)$.
\item If $\alg A$ has rank $n$, then $\alg {\L}_{n,k} \in \II\SU\PP_u(\alg A)$ if and only if $\gcd(n,k)$ divides $d_\alg A$.
\item If $\alg A$ is a nontrivial totally ordered cancellative hoop  then $\II \SU\PP_u(\alg A) = \II\SU\PP_u(\alg C_\o)$.
\item  If $\alg A$ is a bounded Wajsberg chain of finite rank
$k$,  then $d_\alg A$ divides $k$, and $\II\SU \PP_u(\alg A)= \II\SU\PP_u(\alg {\L}_{k,d_\alg A})$.
\item  If $\alg A$ is a bounded Wajsberg chain of finite rank $n$, then $\II\SU\PP_u(\alg A) = \II\SU\PP_u(\alg {\L}^\infty_n)$ if and only if $d_\alg A= n$.
\end{enumerate}
\end{lemma}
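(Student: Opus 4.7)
The plan is to dispatch the eight items in turn, using the $\Gamma$-representation of bounded Wajsberg hoops and importing, through the polynomial equivalence between bounded Wajsberg hoops and $\mathsf{MV}$-algebras recalled in the preliminaries, the ultraproduct analysis of $\mathsf{MV}$-chains carried out in \cite{Gispert2002} and \cite{AglianoMontagna2003}. The organizing principle throughout is that for bounded Wajsberg chains the pair (rank, divisibility index) is a complete $\II\SU\PP_u$-invariant.

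For (1) and (2), I would exploit the standard bijection between congruences of $\Gamma(\alg G, u)$ and convex $\ell$-subgroups of $\alg G$: since $\mathbb Z$ has no proper nonzero convex $\ell$-subgroups, $\alg{\L}_n = \Gamma(\mathbb Z, n)$ is simple, and the embedding clause of (1) is then immediate from Lemma \ref{cond1}(1). Simplicity forces $\op{Rad}(\alg{\L}_n) = \{1\}$, hence $\alg{\L}_n/\op{Rad}(\alg{\L}_n) \cong \alg{\L}_n$ has rank $n$; the divisibility index equals $n$ because any $\alg{\L}_k$ embedded in $\alg{\L}_n$ must satisfy $k \mathrel{|} n$ and $k \le n$, forcing $k = n$. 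For (3), the unique maximal filter of $\alg{\L}_{n,k} = \Gamma(\mathbb Z \lextimes \mathbb Z, (n,k))$ consists of the elements whose first $\mathbb Z$-coordinate is maximal, and the quotient by this filter is isomorphic to $\alg{\L}_n$; subdirect irreducibility follows from totality of the order together with the existence of a unique cover of the trivial filter, and the divisibility index is $\gcd(n,k)$ by Lemma \ref{cond1}(2).

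Items (4), (5), (7) and (8) are the translation through the aforementioned polynomial equivalence of the corresponding statements for $\mathsf{MV}$-chains. The key fact to invoke from \cite{Gispert2002} is that every bounded rank-$n$ Wajsberg chain $\alg A$ is elementarily equivalent to $\alg{\L}_{n,d_\alg A}$, so the $\II\SU\PP_u$-class of $\alg A$ is determined entirely by its rank and its divisibility index; from there the divisibility conditions in (4) and (5) merely encode when one $\alg{\L}_{n,k}$ embeds into an ultrapower of another, and (7) and (8) are specializations. For (6), Lemma \ref{cond1}(3) gives $\alg C_\o \in \II\SU(\alg A)$ for every nontrivial totally ordered cancellative Wajsberg hoop $\alg A$; conversely, such an $\alg A$ is the negative cone of a totally ordered abelian group and is therefore elementarily equivalent to $\alg C_\o$ by classical results on ordered abelian groups, which yields the reverse inclusion $\alg A \in \II\SU\PP_u(\alg C_\o)$.

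The main obstacle is really a bookkeeping one: it must be checked that Gispert's ultraproduct arguments, stated originally for $\mathsf{MV}$-algebras, transfer faithfully to bounded Wajsberg hoops. This is precisely guaranteed by the observation recorded in the preliminaries that every class operator built from $\II,\HH,\SU,\PP,\PP_u$ is preserved under the polynomial equivalence between bounded Wajsberg hoops and $\mathsf{MV}$-algebras, so that results about ultrapowers of Wajsberg algebras can be read off as results about ultrapowers of the corresponding hoop reducts. Once this transfer is acknowledged, each of the eight items reduces either to a computation inside the $\Gamma$-functor or to a direct citation, and no further model-theoretic work is required.
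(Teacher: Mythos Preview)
Your approach is essentially what the paper does: the lemma is stated there without proof, with only the remark that ``the proofs are either trivial or can be found in \cite{AglianoMontagna2003} or \cite{Gispert2002}.'' Your elaboration of which items are direct $\Gamma$-computations (items (1)--(3)), which require the Gispert transfer through the polynomial equivalence (items (4), (5), (7), (8)), and which need a separate cancellative argument (item (6)) is accurate and matches the intent of those citations.

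One small correction on item (6): the claim that an arbitrary nontrivial totally ordered cancellative hoop $\alg A$ is \emph{elementarily equivalent} to $\alg C_\o$ is too strong --- the negative cone of $\mathbb Q$, for instance, is densely ordered below $1$ while $\alg C_\o$ is discretely ordered, so they are distinguishable by a first-order sentence. What actually holds, and what suffices for $\alg A \in \II\SU\PP_u(\alg C_\o)$, is that all nontrivial totally ordered abelian groups share the same \emph{universal} theory in the language $\{+,-,0,\le\}$: each contains a copy of $\mathbb Z$, and a scaling argument shows that any universal sentence failing in one fails already in $\mathbb Z$. Passing to negative cones then embeds $\alg A$ into an ultrapower of $\alg C_\o$. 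With this adjustment your sketch goes through; also, in (2) the phrase ``forcing $k=n$'' should read ``so the maximum such $k$ is $n$,'' since $k \mathrel{|} n$ alone does not force $k = n$.
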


We have:

\begin{theorem} \cite{Agliano2023} Let $\alg A_1,\dots,\alg A_n$ be Wajsberg chains; if for all $i \le n$
\begin{enumerate}
\ib $\alg A_i$ is finite, or
\ib $\alg A_i$ is cancellative, or
\ib $\alg {\L}_n \in \II\SU(\alg A_i)$ for all $n$, or
\ib $\alg A$ is infinite, bounded and the rank of $\alg A$ is equal to $d_\alg A$,
\end{enumerate}
then $\QQ(\alg A_1,\dots,\alg A_n) = \VV(\alg A_1,\dots,\alg A_n)$.  Moreover if  $n=1$ then the converse holds as well.
\end{theorem}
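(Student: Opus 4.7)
\emph{Plan.} The inclusion $\QQ(\alg A_1,\dots,\alg A_n) \subseteq \VV(\alg A_1,\dots,\alg A_n)$ being automatic, I would concentrate on the other direction. By Birkhoff's Theorem \ref{birkhoff}(1) it suffices to show
$$
\VV(\alg A_1,\dots,\alg A_n)_{si} \subseteq \QQ(\alg A_1,\dots,\alg A_n),
$$
and, because Wajsberg hoops are congruence distributive, J\'onsson's Lemma yields
$$
\VV(\alg A_1,\dots,\alg A_n)_{si} \subseteq \HH\SU\PP_u(\alg A_1,\dots,\alg A_n).
$$
The task therefore becomes to \emph{absorb} the outer $\HH$ into $\II\SU\PP_u$ under each of the four hypotheses.

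The structural observation I would exploit is that $\SU\PP_u$ of Wajsberg chains is itself a class of Wajsberg chains, so every subdirectly irreducible homomorphic image is a Wajsberg chain whose $\II\SU\PP_u$-class is pinned down by its rank and divisibility index via Lemma \ref{whproperties}(4)--(8). Under the first condition each generator $\alg{\L}_{k_i}$ is simple by Lemma \ref{whproperties}(1), so $\HH$ contributes nothing new. Under the second, Lemma \ref{whproperties}(6) collapses the ultraproduct class to $\II\SU\PP_u(\alg C_\o)$, where a filter analysis of cancellative chains keeps the $\HH$-step inside the same class. Under the fourth, Lemma \ref{whproperties}(8) identifies $\II\SU\PP_u(\alg A_i)$ with $\II\SU\PP_u(\alg{\L}_n^\infty)$, and the only nontrivial proper quotient $\alg{\L}_n$ embeds back into $\alg{\L}_n^\infty$ by Lemma \ref{whproperties}(1). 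The third condition, asserting $\alg{\L}_n \in \II\SU(\alg A_i)$ for every $n$, is the one I expect to be the main obstacle: I would argue that this property is first-order and therefore survives ultrapowers, and then track the divisibility index through chain quotients so that Lemma \ref{whproperties}(5) confines the subdirectly irreducible quotients inside $\II\SU\PP_u(\alg A_i)$. The mixed-tuple case is then routine, since J\'onsson's Lemma already localises each subdirectly irreducible inside a single ultraproduct of copies of one $\alg A_i$.

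For the converse when $n=1$ I would argue by contrapositive. A Wajsberg chain $\alg A$ violating all four conditions must be infinite, bounded, and of some finite rank $k$ with $d_\alg A$ a \emph{proper} divisor of $k$. By Lemma \ref{whproperties}(7), $\II\SU\PP_u(\alg A) = \II\SU\PP_u(\alg{\L}_{k,d_\alg A})$, while the quotient $\alg A/\op{Rad}(\alg A) \cong \alg{\L}_k$ lies in $\VV(\alg A)$. Since $\alg{\L}_k$ is simple it is $\QQ$-irreducible, so Theorem \ref{quasivariety}(2) would place it in $\II\SU\PP_u(\alg A)$ whenever it lay in $\QQ(\alg A)$; but then Lemma \ref{whproperties}(4), with $d_{\alg{\L}_k}=k$, forces $k \mathrel{|} \gcd(k,d_\alg A)=d_\alg A$, contradicting the hypothesis that $d_\alg A$ properly divides $k$. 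Hence $\alg{\L}_k \in \VV(\alg A)\setminus \QQ(\alg A)$ witnesses $\QQ(\alg A) \subsetneq \VV(\alg A)$.
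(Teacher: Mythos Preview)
The paper does not prove this theorem: it is quoted verbatim from \cite{Agliano2023} and used as a black box, so there is no in-paper argument to compare your proposal against.

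On its own merits, your plan is broadly sound and the converse for $n=1$ is clean and correct. The forward direction, however, has two soft spots. In case~4 you only discuss quotients of $\alg{\L}_n^\infty$ itself, but J\'onsson's Lemma hands you all of $\HH\SU\PP_u(\alg{\L}_n^\infty)$: a subalgebra of an ultrapower may be some $\alg{\L}_{k,h}$ with $k\mid n$, and you still need to argue that every homomorphic image of such a chain lands back in $\II\SU\PP_u(\alg{\L}_n^\infty)$. This is true (the quotient is $\alg{\L}_k$, which embeds by Lemma~\ref{whproperties}(1)), but it should be said. Case~3 is the real gap: the observation that ``$\alg{\L}_m$ embeds'' is first-order only helps with the $\PP_u$ step; once you pass to a subalgebra you may lose the embedded copies of $\alg{\L}_m$, so ``tracking the divisibility index through chain quotients'' via Lemma~\ref{whproperties}(5) does not obviously go through. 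A more robust line is to note that containing every $\alg{\L}_m$ forces $\VV(\alg A_i)=\WH$, and then argue directly that $\QQ(\alg A_i)=\WH$ because every $\alg{\L}_m$ already lies in $\II\SU(\alg A_i)$ and the finite chains generate $\WH$ as a quasivariety; this avoids the $\HH\SU\PP_u$ bookkeeping altogether. Your localisation of ultraproducts to a single $\alg A_i$ in the mixed case is correct.
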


This shows that for every reduced presentation $P$, $\vv V(P)= \vv Q(P)$ and henceforth a proper subvariety $\vv V(P)$ of Wajsberg hoop is structural if and only if $\vv V(P) = \QQ(\alg F_{\vv V(P)}(x)$. First we will consider locally finite varieties.

It is clear from the definition that both $\alg C_\o$ and $\alg {\L}_j^\infty$ contain finitely generated subalgebras that fail to be finite. Hence from Theorem \ref{aglianopanti} we get that a variety $\vv V$ of Wajsberg hoops is locally finite if and only if it is $\vv V (P)$ where $P=(I,\emptyset,\emptyset)$ if and only if it is finitely generated.

Now Wajsberg hoops are basic hoops and:

\begin{theorem} \cite{AglianoUgolini2022} Every finite basic hoop is projective in the class of finite basic hoops. So if $\vv V$ is a locally finite variety of basic hoops every finite algebra is projective.
\end{theorem}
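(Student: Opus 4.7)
The plan is to prove the two assertions in sequence, with the second following easily from the first once it is combined with local finiteness.

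For the main claim—that every finite basic hoop is projective in the class of finite basic hoops—the natural approach is to exploit the structural decomposition of finite basic hoops as finite ordinal sums of finite Wajsberg chains $\alg{\L}_{n_1}\oplus\cdots\oplus\alg{\L}_{n_k}$ (and, for non-subdirectly-irreducible ones, subdirect products of such). Given a surjective homomorphism $h\colon\alg B \twoheadrightarrow \alg A$ between finite basic hoops, I want to construct a section $g\colon\alg A\to\alg B$ with $h\circ g = \op{id}_\alg A$. First I would analyze the chain of congruence filters of $\alg B$ obtained as preimages of the idempotents of $\alg A$; this chain pins down which blocks of the ordinal-sum decomposition of $\alg B$ sit above each summand of $\alg A$. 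On each summand $\alg{\L}_{n_i}$ of $\alg A$ I would then choose a preimage in the corresponding block of $\alg B$ that generates a subalgebra isomorphic to $\alg{\L}_{n_i}$: such a preimage exists because any surjection from a finite Wajsberg hoop onto $\alg{\L}_{n_i}$ admits a section, as one can always lift a generator of $\alg{\L}_{n_i}$ to an element of the same order in the domain.

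The hard part will be the gluing step. A section of an ordinal sum is not just the product of sections on its blocks: the interaction between elements of distinct summands (e.g.\ the collapse of a mixed product to the bottom of the lower block) must be preserved by $g$. Making this precise should amount to an induction along the ordinal-sum structure, combined with a careful use of the filter-congruence correspondence in $\mathsf{FL}_{ew}$-algebras recalled in the preliminaries, which guarantees that the local sections can be assembled into a global homomorphism whose image is a subhoop of $\alg B$ isomorphic to $\alg A$ and is mapped identically back onto $\alg A$ by $h$. This compatibility check is where all the work lies.

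For the second assertion, let $\vv V$ be a locally finite variety of basic hoops, $\alg A\in\vv V$ finite, and $h\colon\alg B \twoheadrightarrow \alg A$ a surjection in $\vv V$. Choose preimages under $h$ of the finitely many elements of $\alg A$ and let $\alg B'$ be the subalgebra of $\alg B$ they generate; local finiteness forces $\alg B'$ to be finite, and by construction $h|_{\alg B'}\colon\alg B' \twoheadrightarrow \alg A$ remains surjective. Applying the first part of the theorem produces a section $g'\colon\alg A \to \alg B'$ of $h|_{\alg B'}$, and composing with the inclusion $\alg B' \hookrightarrow \alg B$ yields a section of $h$, proving that every finite algebra of $\vv V$ is projective in $\vv V$.
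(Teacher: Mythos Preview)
The paper does not prove this statement at all: it is quoted verbatim from \cite{AglianoUgolini2022} and used as a black box to obtain Theorem~\ref{lfquasivarieties}. So there is no in-paper argument to compare your proposal against.

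Your derivation of the second assertion from the first is correct and is the standard reduction via local finiteness.

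For the first assertion, however, your sketch has a genuine gap. The ordinal-sum decomposition $\alg{\L}_{n_1}\oplus\cdots\oplus\alg{\L}_{n_k}$ applies only to totally ordered basic hoops, and your entire lifting-and-gluing outline (the ``chain of congruence filters of $\alg B$ obtained as preimages of the idempotents of $\alg A$'', the block-by-block choice of generators, the inductive gluing along the ordinal sum) tacitly assumes that $\alg A$ is a chain. The parenthetical remark that non-subdirectly-irreducible finite basic hoops are subdirect products of chains does not close the gap: projectivity is not in general inherited by subdirect products, so you would still need an argument reducing projectivity of an arbitrary finite $\alg A$ to that of its chain subdirect factors, and none is given. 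Even in the chain case, the ``hard part'' you identify---ensuring that the local sections glue to a homomorphism across ordinal-sum boundaries---is only asserted, not carried out; the interaction of a lifted generator in one block with elements of a lower block has to be checked explicitly, and it is not automatic that the filter--congruence correspondence alone handles this. As written, the proposal is a plausible plan for the totally ordered case but not a proof of the stated theorem.
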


Therefore:

\begin{theorem}\label{lfquasivarieties} Every locally finite quasivariety of Wajsberg hoops is a primitive variety.
\end{theorem}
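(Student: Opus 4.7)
The plan is to show that $\vv Q$ is actually a variety of the form $\vv V(P)$ with $P=(I,\emptyset,\emptyset)$, and then to verify Gorbunov's criterion in Theorem \ref{maingorbunov} to obtain primitivity.

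First I would observe that $\alg F_\vv Q(n)=\alg F_{\vv V(\vv Q)}(n)$, so the variety $\vv V(\vv Q)=\HH\SU\PP(\vv Q)$ is again locally finite. A locally finite subvariety of Wajsberg hoops cannot contain $\alg C_\o$ or any $\alg{\L}_j^\infty$, since both of these are one-generated and infinite, so by Theorem \ref{aglianopanti} it must be $\vv V(P)$ for a reduced presentation $P=(I,\emptyset,\emptyset)$; the theorem of Agliano 2023 stated just before this statement then gives $\vv V(P)=\vv Q(P)=\QQ(\alg{\L}_i:i\in I)$. It will therefore suffice to show that each $\alg{\L}_i$ belongs to $\vv Q$. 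Since Wajsberg hoops are congruence distributive and each $\alg{\L}_i$ is subdirectly irreducible in $\vv V(\vv Q)$, J\'onsson's lemma yields $\alg{\L}_i\in\HH\SU\PP_u(\vv Q)\subseteq\HH(\vv Q)$. Local finiteness now lets me descend to a \emph{finite} $\alg B\in\vv Q$ with $\alg B\twoheadrightarrow\alg{\L}_i$, by taking the subalgebra generated by preimages of the finitely many generators of $\alg{\L}_i$. The Agliano-Ugolini projectivity theorem then supplies an embedding $\alg{\L}_i\hookrightarrow\alg B$, so $\alg{\L}_i\in\vv Q$ for every $i\in I$, and hence $\vv Q=\vv V(P)$.

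For the second step, now that $\vv Q$ is known to be a variety its $\vv Q$-irreducibles coincide with its subdirectly irreducibles. A further application of J\'onsson's lemma, together with the fact that an ultraproduct of finitely many finite algebras introduces no new isomorphism types, identifies the finite subdirectly irreducibles of $\vv V(P)$ as exactly the chains $\alg{\L}_k$ with $k\in I{\downarrow}$. Each such chain is a finite basic hoop, hence by Agliano-Ugolini projective, and \emph{a fortiori} weakly projective, in the class of finite basic hoops, and therefore in the subclass of finite algebras of $\vv Q$. Theorem \ref{maingorbunov} then delivers primitivity.

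I do not foresee a real obstacle. The Agliano-Ugolini projectivity result for finite basic hoops carries almost the entire weight of the argument, both in pulling the generating chains $\alg{\L}_i$ back into $\vv Q$ and in verifying Gorbunov's condition; the only mildly fussy point, namely the descent to a finite $\alg B\in\vv Q$ surjecting onto $\alg{\L}_i$, is immediate from local finiteness.
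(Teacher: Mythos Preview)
Your argument is correct, but it takes a longer road than the paper's proof. The paper proceeds more directly: once $\vv V=\HH(\vv Q)$ is seen to be locally finite, the Agliano--Ugolini projectivity theorem says every finite algebra of $\vv V$ is projective in $\vv V$, so Theorem~\ref{maingorbunov} immediately gives that $\vv V$ is primitive; and then, by the very definition of primitivity, every subquasivariety of $\vv V$ is equational in $\vv V$, hence a variety, so in particular $\vv Q$ is a variety and $\vv Q=\HH(\vv Q)=\vv V$. No appeal to J\'onsson's Lemma, no identification of the presentation $P$, and no explicit listing of the finite subdirectly irreducibles is needed.

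Your route instead first pins down $\vv Q=\vv V(P)$ with $P=(I,\emptyset,\emptyset)$ by pulling each generator $\alg{\L}_i$ back into $\vv Q$ via J\'onsson's Lemma and projectivity, and only then checks Gorbunov's criterion by enumerating the finite subdirectly irreducibles as the $\alg{\L}_k$ with $k\in I{\downarrow}$. This is perfectly valid and has the minor advantage of making the structure of $\vv Q$ explicit, but it duplicates work: the primitivity of $\vv V$ already entails $\vv Q=\vv V$ without the intermediate step, so the paper's argument is both shorter and more conceptual.
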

\begin{proof} Let $\vv Q$ be a locally finite quasivariety of Wajsberg hoops; then it is easy to check that $\vv V = \HH(\vv Q)$ is locally finite and hence every finite algebra of $\vv V$ is projective in $\vv V$.
By Theorem \ref{maingorbunov} $\vv V$ is primitive and thus every subquasivariety of $\vv V$ is equational, i.e. it is a variety. This implies that $\vv Q$ is a variety and in fact $\vv Q = \vv V$.
\end{proof}

Via the Blok-Pigozzi connection we get:

\begin{corollary} Every locally tabular extension of $\mathcal {MV}^+$ is tabular, axiomatic and hereditarily structurally complete.
\end{corollary}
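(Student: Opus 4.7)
My plan is to translate the logical statement into its algebraic counterpart via the Blok–Pigozzi connection and then invoke Theorem \ref{lfquasivarieties} together with the structural description of locally finite varieties of Wajsberg hoops collected just above it. Concretely, if $\mathcal{L}$ is a locally tabular extension of $\mathcal{MV}^+$, then by Theorem \ref{main1} its equivalent algebraic semantics $\vv Q_\mathcal{L}$ is a subquasivariety of $\WH$, and by the characterization of local tabularity in terms of the equivalent algebraic semantics, $\vv Q_\mathcal{L}$ is locally finite.

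Next I would apply Theorem \ref{lfquasivarieties} to conclude that $\vv Q_\mathcal{L}$ is a primitive variety. Being a variety, $\vv Q_\mathcal{L}$ corresponds via the dual isomorphism in Theorem \ref{main1}(1) to an axiomatic extension, so $\mathcal L$ is axiomatic. Being primitive, $\vv Q_\mathcal L$ makes $\mathcal L$ hereditarily structurally complete by Theorem \ref{main2}(2).

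For tabularity, I would use the discussion immediately preceding Theorem \ref{lfquasivarieties}: since both $\alg C_\o$ and each $\alg{\L}_j^\infty$ contain one-generated infinite subalgebras, Theorem \ref{aglianopanti} forces a locally finite variety of Wajsberg hoops to be exactly some $\vv V(P)$ with $P=(I,\emptyset,\emptyset)$, and any such variety is generated by the finite family $\{\alg{\L}_i : i \in I\}$, hence by the single finite algebra $\prod_{i \in I}\alg{\L}_i$. Applying this to $\vv Q_\mathcal{L}$ (which we already know is a variety) we obtain that $\vv Q_\mathcal{L}$ is finitely generated, i.e.\ $\mathcal L$ is tabular.

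There is no real obstacle here: all the substantive content sits in Theorem \ref{lfquasivarieties} and in the structural classification of subvarieties of Wajsberg hoops supplied by Theorem \ref{aglianopanti}. The only point requiring any care is verifying that the standing assumption in Theorem \ref{lfquasivarieties} truly applies to our $\vv Q_\mathcal L$, which is automatic once one notes that local tabularity of $\mathcal L$ is equivalent to local finiteness of $\vv Q_\mathcal L$.
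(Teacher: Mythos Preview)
Your proposal is correct and follows essentially the same route as the paper: both arguments use Theorem~\ref{lfquasivarieties} to conclude that the equivalent algebraic semantics is a primitive variety (yielding axiomaticity and hereditary structural completeness via Theorems~\ref{main1} and~\ref{main2}), and both invoke Theorem~\ref{aglianopanti} together with the non-local-finiteness of $\alg C_\o$ and $\alg{\L}_j^\infty$ to see that a locally finite variety of Wajsberg hoops must be some $\vv V(I,\emptyset,\emptyset)$, hence finitely generated. The only cosmetic difference is the order of presentation: the paper establishes tabularity first and then appeals to Theorem~\ref{lfquasivarieties} for the rest, while you apply Theorem~\ref{lfquasivarieties} first (to know $\vv Q_{\mathcal L}$ is a variety) and then deduce tabularity.
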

\begin{proof}  By Theorem \ref{aglianopanti} every proper subvariety of Wajsberg hoops is of the form $\vv V(I,J,K)$ for some reduced presentation.
It is obvious that $\alg C_\o$ is not locally finite and it is easy to see that the same holds for $\alg {\L}_n^\infty$ for $n\ge 1$. Therefore if $\vv V(I,J,K)$ is locally finite, then $J=K=\emptyset$. But $I$ is a finite set, so $\vv V(I,\emptyset,\emptyset)$ is finitely generated. So every locally finite variety of Wajsberg hoops is finitely generated; so every locally tabular extension of $\mathcal {MV}^+$ is tabular and the rest follows from Theorem \ref{lfquasivarieties}.
\end{proof}

The variety $\vv C =\vv V(\emptyset,\emptyset,\{0\})$ is the variety of {\bf cancellative hoops}; now $\vv C$ can be shown to be primitive by a variety of means. The simplest one is probably to observe first that it is an atom
in the lattice of subvarieties $\Lambda(\mathsf{WH})$ \cite{Amer1984}  hence it is equationally complete. Then one can quote \cite{BergmanMcKenzie1990} where it is stated that any equationally complete congruence modular variety
has no proper subquasivarieties. As $\vv C$ is congruence distributive (having a lattice reduct) it has no proper subquasivarieties and hence it is primitive.

Now we can characterize all the structural varieties of Wajsberg hoops. First we observe that $\WH$ itself is not structural; this is well-known and can be shown in several ways. The most direct one is probably to observe that
the set $\{\alg {\L}_p: \text{$p$ prime}\}$ consists of simple algebras with no proper subalgebras and then invoke Corollary 1 in \cite{AdamsDziobiak1994}.

\begin{theorem} Let $P=(I,J,K)$ be a reduced triple such that $\vv V =\vv V(I,J,K)$ is a proper subvariety of Wajsberg hoops. Then $\vv V$ is structural if and only if either $J =\emptyset$, or $J =\{1\}$.
\end{theorem}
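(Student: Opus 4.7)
The plan is to turn the statement into the algebraic question of whether $\vv V(P) = \QQ(\alg B_\Delta)$. By Theorem~\ref{structural}, Theorem~\ref{thm:bdelta} and the observation in Section~\ref{bdelta} that $\QQ(\alg F_{\vv V(P)}(\o)) = \QQ(\alg F_{\vv V(P)}(x)) = \QQ(\alg B_\Delta)$, structural completeness of $\vv V(P)$ is equivalent to $\vv V(P) = \QQ(\alg B_\Delta)$; since $\vv V(P) = \QQ(\vv K_P)$ by the theorem of \cite{Agliano2023} cited at the start of this section, it further suffices to decide whether every member of $\vv K_P$ belongs to $\QQ(\alg B_\Delta)$. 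Theorems~\ref{embed1} and~\ref{embed2} already cover $\alg{\L}_a$ for $a \in I$ and $\alg C_\o$ when $K = \{\o\}$ (which forces $J = \emptyset$), so the whole question reduces to the fate of the chains $\alg{\L}_j^\infty = \alg{\L}_{j,0}$ for $j \in J$.

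For the sufficiency direction, the only new case is $J=\{1\}$, where I would embed $\alg{\L}_{1,0}$ directly into $\alg B_\Delta$ via a tailored Wajsberg function. Choose an integer $m \ge 2$ such that $mh \ge k$ for every $(k,h,2) \in \Delta_I$ with $h \ge 1$ (for example $m = \op{lcm}(I{\downarrow})$ when $I \ne \emptyset$ and $m = 2$ otherwise); Theorem~\ref{Wajsberg functions} furnishes a term $t(x) \in \alg F_\WH(x)$ corresponding to $f(x) = \min\{mx, 1\}$. A coordinate-wise application of Lemma~\ref{identities} shows that $t(\ol g)$ equals $(0,m)$ on the coordinate $(1,0,0)$, equals $0$ on $(1,0,2)$ when that coordinate is present, and equals $1$ on every remaining coordinate. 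A short direct computation then verifies that the resulting subalgebra $\la t(\ol g) \ra$, whose non-trivial projections land in $\alg{\L}_{1,0} \times \alg{\L}_1$ (or just $\alg{\L}_{1,0}$ if $I = \emptyset$) and which is generated by $((0,m),0)$, consists exactly of the pairs $((0,mn),0)$ and $((1,-mn),1)$ for $n \ge 0$ together with bottom and top, with first-coordinate projection injective, whence $\la t(\ol g) \ra \cong \alg{\L}_{1,0}$. The main obstacle is the tension at $0$: if $I \ne \emptyset$ then $(1,0,2) \in \Delta_I$ forces $t(0)$ not to be $1$, whereas making $t(\ol g)$ nontrivial on $(1,0,0)$ requires $t$ to have positive slope near $0$; the resolution is to accept the value $0$ at $(1,0,2)$ and to observe that mixing it with the value $(0,m)$ on $(1,0,0)$ still generates a copy of $\alg{\L}_{1,0}$.

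For necessity, assume $J \ne \emptyset$ and $J \ne \{1\}$. Since $J$ is a divisibility antichain and $1$ divides every positive integer, there exists $j \in J$ with $j \ge 2$. I would show that the generator $\alg{\L}_{j,0} \in \vv K_P$ does not lie in $\QQ(\alg B_\Delta)$, which suffices for non-structurality. As $\alg{\L}_{j,0}$ is subdirectly irreducible and therefore $\QQ(\alg B_\Delta)$-irreducible, Theorem~\ref{quasivariety}(2) implies that any such membership would give $\alg{\L}_{j,0} \in \II\SU\PP_u(\alg B_\Delta) \sse \II\SU\PP_u(\prod_{(k,h,i)\in\Delta} \alg A^i_{k,h})$, and the subdirect irreducibility of the chain $\alg{\L}_{j,0}$ forces the resulting embedding into $\prod(\alg A^i_{k,h})^U$ to factor through a single projection: $\alg{\L}_{j,0} \hookrightarrow (\alg A^i_{k,h})^U$ for some $(k,h,i) \in \Delta$. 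A case analysis then rules every factor out: finite $\alg{\L}_k$ is ultrapower-invariant and cannot contain the infinite $\alg{\L}_{j,0}$; factors $\alg{\L}_{k,h}$ from $\Delta_J$ satisfy $\gcd(k,h) = 1$, so Lemma~\ref{whproperties}(4) requires $j = d_{\alg{\L}_{j,0}}$ to divide $1$, contradicting $j \ge 2$; and $\alg C_\o$ is absent because $\Delta_K = \emptyset$ whenever $J \ne \emptyset$. All cases fail, so $\alg{\L}_{j,0} \notin \QQ(\alg B_\Delta)$ and $\vv V(P)$ is not structural. The decisive technical input is that the coprimality condition built into $\Delta_J$ prevents any factor in $\alg B_\Delta$ from having divisibility index exceeding $1$.
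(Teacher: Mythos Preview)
Your overall strategy is sound and the argument goes through, but it genuinely differs from the paper's proof, particularly in the necessity direction. The paper does \emph{not} work through $\alg B_\Delta$ there: given $n \in J$ with $n > 1$, it directly exhibits a proper subquasivariety with the same variety closure, namely $\QQ(\vv K)$ for $\vv K = \{\alg{\L}_i : i \in I\} \cup \{\alg{\L}_j^\infty : j \in J,\ j \ne n\} \cup \{\alg{\L}_{n,1}\}$, and checks via Lemma~\ref{whproperties} that $\alg{\L}_n^\infty \notin \II\SU\PP_u(\vv K)$ while $\HH(\QQ(\vv K)) = \vv V(P)$. That route is shorter and avoids any ultraproduct analysis of $\alg B_\Delta$. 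Your route instead reuses the $\alg B_\Delta$ machinery and isolates the coprimality condition on $\Delta_J$ as the obstruction, which makes the two halves of the proof more uniform. For sufficiency with $J=\{1\}$, your explicit embedding of $\alg{\L}_{1,0}$ into $\alg B_\Delta$ is essentially what the paper carries out later in Lemma~\ref{embed3}; the paper's proof of the present theorem only cites Theorems~\ref{embed1} and~\ref{embed2}, which strictly speaking do not handle $\alg{\L}_1^\infty$.

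Two small points should be tightened. First, Lemma~\ref{identities} only asserts \emph{equalities} between evaluated terms, not specific values, so ``a coordinate-wise application of Lemma~\ref{identities}'' does not by itself yield $t(\ol g)(1,0,0)=(0,m)$. One clean fix is to take $m$ a power of $2$ and use the explicit term $t_2(x)=(x\imp x^2)\imp x$, for which $t_2((0,s))=(0,2s)$ in $\alg{\L}_{1,0}$ by direct computation, then iterate. Second, Lemma~\ref{whproperties}(4) is stated only for $\alg A$ of rank $n$ against $\alg{\L}_{n,k}$, so your invocation covers only the factors with $k=j$. For $k\ne j$ an extra line is needed: since $\alg{\L}_j \hookrightarrow \alg{\L}_{j,0}$ and embeddability of the finite $\alg{\L}_j$ is first-order, $\alg{\L}_{j,0}\in\II\SU\PP_u(\alg{\L}_{k,h})$ would force $\alg{\L}_j\in\II\SU(\alg{\L}_{k,h})$, whence $j\mid\gcd(k,h)=1$ by Lemma~\ref{cond1}(2). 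Both are easy patches and do not affect the structure of your argument.
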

\begin{proof} Suppose  that  $J\ne \emptyset$ and $J \ne \{1\}$.
Then there is an $n \in J$ with $n > 1$.  Let $\vv K = \{\alg {\L}_i: i \in I\} \cup \{\alg {\L}_j: j \in J, j \ne n\} \cup \{\alg {\L}_{n,1}\}$;
Clearly $\QQ(\vv K) \sse \vv Q(P) = \vv V(P)$; if $\alg {\L}_n^\infty \in \QQ(\vv K)$ then, by Theorem \ref{birkhoff},  $\alg {\L}_n^\infty \in \II\SU\PP_u(\vv K)$, since it is subdirectly irreducible.
But all the chains in $\vv K$ are either finite or their divisibility index is not divided by $n$; hence, by Lemma \ref{whproperties} (3) and (4),  $\alg {\L}_n^\infty \notin \II\SU\PP_u(\vv K)$.
Therefore $\QQ(\vv K) \subsetneq \vv Q(P)$; however, as $\alg {\L}_{n,1} \in \VV(\alg {\L}_n^\infty$), $\HH(\QQ(\vv K)) = \vv V(P)$.  So $\vv V(P)$ is not structural.

For the converse, modulo the results on locally finite varieties above, we need only to prove that $\vv V(P)$ is structurally complete whenever $P=(I,\emptyset,\{\o\})$ or $P=(I,\{1\},\emptyset)$.
In either case,  by Theorems  \ref{embed1} and \ref{embed2}, every generator of $\vv V(P)=\vv Q(P)$ is embeddable in $\alg B_\Delta$.  So
$$
\vv V(P) = \vv Q(P) \sse \QQ(\alg B_\Delta) = \QQ(\alg F_{\vv V(P)}(x))
$$
and this proves that $\vv V(P)$ is structural.
\end{proof}

By the description of the proper subvarieties of $\WH$ by reduced triples, we get at once:

\begin{corollary}\label{primitivevar} A variety of Wajsberg hoops is structural if and only if it is primitive.
\end{corollary}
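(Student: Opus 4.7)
The direction ``primitive $\Rightarrow$ structural'' is immediate: $\vv V$ is a subquasivariety of itself, so primitivity forces $\vv V$ to be structural, and the preceding theorem then gives $J(P) \in \{\emptyset, \{1\}\}$.

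For the converse, fix $\vv V = \vv V(P)$ structural and a subquasivariety $\vv Q' \sse \vv V$; we show $\vv Q'$ is a variety, which is exactly what primitivity of the ambient variety $\vv V$ requires. Let $\vv V' := \HH(\vv Q')$, a subvariety of $\vv V$, and write $\vv V' = \vv V(P')$ for a reduced triple $P'$. The key step is to verify that $\vv V'$ is itself structural. Any generator $\alg{\L}_j^\infty \in \vv K_{P'}$ is a one-generated totally ordered Wajsberg hoop lying in $\vv V$; but in each of the three structural configurations for $P$, a direct inspection rules out $j > 1$: the locally finite case $\vv V(I,\emptyset,\emptyset)$ admits no infinite one-generated members at all; for $\vv V(I,\emptyset,\{\o\})$, J\'onsson's Lemma (Theorem \ref{birkhoff}) combined with the fact that $\vv K_P$ consists solely of finite chains and the cancellative chain $\alg C_\o$ shows that no bounded infinite $\alg{\L}_j^\infty$ can appear in $\vv V$; and for $\vv V(I,\{1\},\emptyset)$, Lemma \ref{onegenerated2} forces any member of the form $\alg{\L}_{k,h}$ to have $k \in \{1\}{\downarrow} = \{1\}$, so only $\alg{\L}_1^\infty$ is admitted. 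Hence $J(P') \sse J(P) \sse \{1\}$, and by the preceding theorem $\vv V'$ is structural.

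The remainder is a short free-algebra computation. By Theorem \ref{structural} together with the identity $\QQ(\alg F_{\vv V'}(\o)) = \QQ(\alg F_{\vv V'}(x))$ valid for every proper subvariety of $\WH$ (recalled at the start of Section \ref{bdelta}), structurality of $\vv V'$ yields $\vv V' = \QQ(\alg F_{\vv V'}(x))$. Since $\vv V' = \VV(\vv Q')$, the free algebras coincide: $\alg F_{\vv Q'}(x) = \alg F_{\vv V'}(x)$, and this free algebra belongs to $\vv Q'$. Therefore
\[
\vv V' \;=\; \QQ(\alg F_{\vv V'}(x)) \;\sse\; \vv Q' \;\sse\; \HH(\vv Q') \;=\; \vv V',
\]
so $\vv Q' = \vv V'$ is a variety, as desired. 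The only non-routine point is the descent of structurality to subvarieties established in the middle paragraph; once that case analysis is in hand, the conclusion follows by unravelling the definition of structurality together with the one-variable free-algebra identity.
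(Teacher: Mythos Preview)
Your overall strategy is exactly what the paper means by ``at once'': show that every subvariety of a structural $\vv V(P)$ is again structural (i.e.\ has $J'\in\{\emptyset,\{1\}\}$), and then use the free-algebra identity to conclude that every subquasivariety $\vv Q'\sse\vv V(P)$ coincides with $\HH(\vv Q')$. The final free-algebra computation is correct, and your treatments of the locally finite case and of $P=(I,\emptyset,\{\o\})$ via J\'onsson's Lemma are fine.

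The gap is in the case $J(P)=\{1\}$. You assert that ``any generator $\alg{\L}_j^\infty\in\vv K_{P'}$ is a one-generated totally ordered Wajsberg hoop'' and then appeal to Lemma~\ref{onegenerated2}. But $\alg{\L}_j^\infty=\alg{\L}_{j,0}$ is \emph{not} one-generated when $j>1$: by Lemma~\ref{onegenerated} the one-generated bounded infinite Wajsberg chains are exactly the $\alg{\L}_{k,h}$ with $\gcd(k,h)=1$, and $\gcd(j,0)=j$. So Lemma~\ref{onegenerated2} does not apply to $\alg{\L}_j^\infty$, and this case is not established.

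The repair is short. If $j\in J(P')$ then $\alg{\L}_{j,1}\in\VV(\alg{\L}_j^\infty)\sse\vv V'\sse\vv V(I,\{1\},\emptyset)$, and $\alg{\L}_{j,1}$ \emph{is} one-generated (Remark~\ref{rem:generators}); Lemma~\ref{onegenerated2}(2) then forces $j\in\{1\}{\downarrow}=\{1\}$. Alternatively one can rerun the J\'onsson argument: every chain in $\HH\SU\PP_u(\{\alg{\L}_i:i\in I\}\cup\{\alg{\L}_1^\infty\})$ is either finite, cancellative, or bounded of rank~$1$, so no $\alg{\L}_j^\infty$ with $j>1$ appears. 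Either route closes the gap and your proof then goes through.
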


And thus, via the Blok-Pigozzi connection:

\begin{corollary} An axiomatic extension on $\mathcal{MV}^+$ is structurally complete if and only if it is hereditarily structurally complete.
\end{corollary}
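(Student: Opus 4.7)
The proof will be a direct application of the Blok–Pigozzi dictionary to the preceding algebraic corollary. First I would note that the equivalent algebraic semantics of $\mathcal{MV}^+$ is the variety $\WH$ of Wajsberg hoops, and by Theorem \ref{main1}(1) the lattice $\op{Th}_a(\mathcal{MV}^+)$ of axiomatic extensions of $\mathcal{MV}^+$ is dually isomorphic to $\Lambda(\WH)$. So if $\mathcal{L}$ is an axiomatic extension of $\mathcal{MV}^+$, its equivalent algebraic semantics $\vv Q_{\mathcal L}$ is in fact a \emph{variety} $\vv V \in \Lambda(\WH)$, rather than merely a quasivariety.

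Next, I would invoke Theorem \ref{main2}: $\mathcal{L}$ is structurally complete if and only if $\vv V$ is structural, and $\mathcal{L}$ is hereditarily structurally complete if and only if $\vv V$ is primitive. One direction is immediate from the definitions: primitive quasivarieties are in particular structural, and so hereditary structural completeness always implies structural completeness. For the nontrivial converse, I would apply Corollary \ref{primitivevar}, which says that a variety of Wajsberg hoops is structural if and only if it is primitive. Combining: if $\mathcal L$ is structurally complete, then $\vv V$ is structural, hence primitive by Corollary \ref{primitivevar}, hence $\mathcal L$ is hereditarily structurally complete.

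There is essentially no obstacle here once Corollary \ref{primitivevar} is in place; the only subtlety worth flagging is that the statement concerns \emph{axiomatic} extensions, which is exactly what forces $\vv Q_{\mathcal L}$ to be a variety rather than a proper quasivariety and thereby allows us to apply Corollary \ref{primitivevar} (whose conclusion is phrased for varieties). The finitary analogue—characterizing which \emph{finitary} extensions are structurally complete—would require the full machinery of Section 4 rather than this one-line translation, since for arbitrary subquasivarieties of $\WH$ the equivalence between being structural and being primitive need not hold.
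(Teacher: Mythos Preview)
Your proof is correct and follows exactly the approach the paper intends: the corollary is stated immediately after Corollary~\ref{primitivevar} with the phrase ``And thus, via the Blok--Pigozzi connection,'' and your argument spells out precisely that translation using Theorems~\ref{main1} and~\ref{main2}. Your remark that the restriction to \emph{axiomatic} extensions is what makes $\vv Q_{\mathcal L}$ a variety (and hence eligible for Corollary~\ref{primitivevar}) is a useful clarification that the paper leaves implicit.
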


\subsection{Structural subquasivarieties} For quasivarieties we need to work a little bit more. First we need a lemma that appears in \cite{Gispert2016} (Lemma 4.5):

\begin{lemma}\label{gispert} Let $n>1$ and let $\alg D_n$ be the subalgebra of $\alg{\L}_{n,1} \times \alg{\L}_{n,n-1}$ generated by $((1,0),(1,1))$. Then $\alg {\L}_{n,1}$ is embeddable in $\alg D_n$.
\end{lemma}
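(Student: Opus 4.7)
My plan is to realise $\alg{\L}_{n,1}$ inside $\alg D_n$ as the graph of a suitable Wajsberg hoop embedding $\phi:\alg{\L}_{n,1}\hookrightarrow\alg{\L}_{n,n-1}$. Under the Mundici equivalence $\Gamma$ between abelian $\ell$-groups with strong unit and bounded Wajsberg hoops, such a $\phi$ is induced by the $\ell$-group map $\mathbb{Z}\lextimes\mathbb{Z}\to\mathbb{Z}\lextimes\mathbb{Z}$ given by $(a,b)\mapsto(a,(n-1)b)$; this map is plainly injective, lex-order preserving, and sends the strong unit $(n,1)$ to $(n,n-1)$, so it induces the required $\phi$, with $\phi((1,0))=(1,0)$. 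The graph $G := \{(a,\phi(a)) : a\in\alg{\L}_{n,1}\}$ is then a subalgebra of $\alg{\L}_{n,1}\times\alg{\L}_{n,n-1}$ isomorphic to $\alg{\L}_{n,1}$ via the first projection.

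Since $(1,0)$ generates $\alg{\L}_{n,1}$, the graph $G$ is itself generated inside the product by the single element $((1,0),(1,0))$. Thus the lemma reduces to exhibiting this element in $\alg D_n$, i.e., to displaying a unary term $t(x)$ satisfying $t((1,0))=(1,0)$ in $\alg{\L}_{n,1}$ and $t((1,1))=(1,0)$ in $\alg{\L}_{n,n-1}$; evaluating such a $t$ at the generator $((1,0),(1,1))$ of $\alg D_n$ yields $((1,0),(1,0))$, and the subalgebra of $\alg D_n$ it generates must then coincide with $G$.

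The candidate is $t(x) := x\meet\neg\bigl((n-1)\cdot x\bigr)$, which in the $\Gamma$-representation amounts to the meet of $x$ with the complement of its $(n-1)$-fold MV-sum. The MV-style operations $\neg$ and $\oplus$ make sense because both generators are nilpotent (a sufficiently high power vanishes), so the bottom $0$ is expressible as the hoop term $x^{n+1}$; one may then write $\neg y$ as $y\imp x^{n+1}$ and spell out $(n-1)\cdot x$ iteratively in the hoop signature. A direct calculation verifies the claim: in $\alg{\L}_{n,1}$, $(n-1)(1,0)=(n-1,0)$ and $\neg(n-1,0)=(n,1)-(n-1,0)=(1,1)$, giving $t((1,0))=(1,0)\meet(1,1)=(1,0)$; in $\alg{\L}_{n,n-1}$, $(n-1)(1,1)=(n-1,n-1)$ and $\neg(n-1,n-1)=(n,n-1)-(n-1,n-1)=(1,0)$, giving $t((1,1))=(1,1)\meet(1,0)=(1,0)$.

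The one point requiring care is uniformity across $n$: although $(1,0)^2=0$ always in $\alg{\L}_{n,1}$ and $(1,1)^2=0$ in $\alg{\L}_{n,n-1}$ for $n\ge 3$, one has $(1,1)^2=(0,1)\ne 0$ in $\alg{\L}_{2,1}$. So the nilpotence exponent must be chosen uniformly — the exponent $n+1$ works — and consequently the shorthands $(n-1)\cdot x$ and $\neg y$ have to be unpacked into bona fide hoop terms in $x$. This is routine bookkeeping rather than a conceptual difficulty, and with it in place the proof is complete.
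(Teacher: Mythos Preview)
The paper does not actually prove this lemma: it simply cites \cite{Gispert2016}, Lemma 4.5, so there is no in-paper argument to compare against. Your proof is correct and self-contained. The key ideas --- realising $\alg{\L}_{n,1}$ as the graph of the embedding $\phi:\alg{\L}_{n,1}\hookrightarrow\alg{\L}_{n,n-1}$ induced via $\Gamma$ by the unital $\ell$-group map $(a,b)\mapsto(a,(n-1)b)$, and then producing $((1,0),(1,0))=((1,0),\phi(1,0))$ inside $\alg D_n$ by the hoop term $t(x)=x\meet(x\to x^{n+1})^{n-1}$ --- are sound, and the computations check out, including the care you take with the nilpotence exponent for $n=2$. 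Since $(1,0)$ generates $\alg{\L}_{n,1}$, the element $((1,0),(1,0))$ generates the graph $G\cong\alg{\L}_{n,1}$ inside $\alg D_n$, completing the argument.
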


Using Lemma \ref{gispert} we can prove:

\begin{lemma}\label{embed3} Let $P=(I,J,\emptyset)$ be a reduced triple. Then for any $j\in J$, $\alg{\L}_{j,1}$ is embeddable in $\alg B_\Delta$.
\end{lemma}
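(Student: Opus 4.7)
The plan is to reduce the problem to Lemma \ref{gispert}: it is enough to exhibit a copy of the one-generated algebra $\alg D_j$ inside $\alg B_\Delta$, for then $\alg{\L}_{j,1}\hookrightarrow\alg D_j\hookrightarrow\alg B_\Delta$ follows immediately. Since $\alg D_j$ is, by definition, the subalgebra of $\alg{\L}_{j,1}\times\alg{\L}_{j,j-1}$ generated by $((1,0),(1,1))=(g_{j,1},g_{j,j-1})$, the concrete task is to produce a single element $t\in\alg B_\Delta$ whose coordinate at $(j,1,0)$ is $g_{j,1}$, whose coordinate at $(j,j-1,0)$ is $g_{j,j-1}$, and whose remaining coordinates are all equal to $1$. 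Because every Wajsberg term in one variable evaluates to $1$ at $1$, such a $t$ would force the obvious projection $\langle t\rangle\to\alg{\L}_{j,1}\times\alg{\L}_{j,j-1}$ to be an isomorphism onto $\alg D_j$, giving the desired embedding.

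The element $t$ will be taken of the form $t=\widehat f(\ol g)$ for a suitable Wajsberg function $f$ in one variable. The key point is that both $g_{j,1}=(1,0)$ and $g_{j,j-1}=(1,1)$ project to the residue $1\in\alg{\L}_j$, which in the functional representation corresponds to the point $1/j\in[0,1]$. Hence by Lemma \ref{identities}(3) the prescription $f(x)=x$ on a small neighborhood of $1/j$ forces $\widehat f(g_{j,1})=g_{j,1}$ and $\widehat f(g_{j,j-1})=g_{j,j-1}$ at once. To make $t$ trivial at every other coordinate, we additionally prescribe $f(x)=1$ on small neighborhoods of all the other rationals $r/k$ with $k$ dividing $m:=\prod_{n\in I\cup J}n$. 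The reduced triple conditions guarantee that, among the $J$-coordinates, the residue $1/j$ appears only at $(j,1,0)$ and $(j,j-1,0)$: a residue $r/k=1/j$ with $k\in J{\downarrow}$ forces $j\mid k$, and since $k$ must divide some element of $J$ while no element of $J\setminus\{j\}$ is a multiple of $j$, the only possibility is $k=j$. The single $I$-coordinate that can still have residue $1/j$ is $(j,1,2)$, occurring precisely when $j\in I{\downarrow}$; at this coordinate $t$ equals the generator of $\alg{\L}_j$, but this value is the common image of $g_{j,1}$ and $g_{j,j-1}$ under the radical quotient, so the corresponding component of $\langle t\rangle$ lies on the diagonal of the radical quotients and leaves $\langle t\rangle\cong\alg D_j$ untouched.

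The construction of $f$ itself is a routine piecewise linear assembly in the spirit of the $L$-functions used in the proofs of Theorems \ref{embed1} and \ref{embed2}: one places nodes at the relevant fractions $n/m$, sets $f$ equal to $1$ on short intervals centered at each node except $1/j$, and inserts around $1/j$ a narrow symmetric ``$f(x)=x$'' plateau whose width is a suitable fraction of $1/m$ so that the two straight pieces joining it to the surrounding constant-$1$ segments have integer slopes. This explicit integer-slope book-keeping is the only technically delicate step. The degenerate case $j=1$ is not covered by Lemma \ref{gispert}, but here the reduced triple conditions force $J=\{1\}$ and $\alg{\L}_{1,1}\cong\alg{\L}_{1,0}$, and the same method applies directly: one takes $f(x)=x$ near $0$ and $f(x)=1$ near every other relevant rational, obtains a $t$ with $t|_{(1,0,0)}=g_{1,0}$, and checks that the projection $\langle t\rangle\to\alg{\L}_{1,0}$ at $(1,0,0)$ is an isomorphism, the only potentially non-trivial other coordinate $(1,0,2)$ receiving the radical image of $g_{1,0}$ and thus creating no new identifications.
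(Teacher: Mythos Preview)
Your strategy is the paper's: isolate the two coordinates $(j,1,0)$ and $(j,j-1,0)$ of $\ol g$ via a Wajsberg function that is trivial near every relevant rational except $1/j$, recognise the resulting one-generated subalgebra as $\alg D_j$, and invoke Lemma~\ref{gispert}. The paper takes $f\equiv 0$ near $1/j$ and $f\equiv 1$ elsewhere and then forms $\ol g\vee f(\ol g)$; your ``$f(x)=x$ near $1/j$'' produces exactly the same element, so the two arguments are the same up to packaging.

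One small oversight: your residue analysis treats only the $i=0$ coordinates. For $i=1$ the value is $\neg g_{k,h}$, whose residue is $(k-r)/k$; the same reduced-triple argument forces $k=j$ and then $r=j-1$, which by Remark~\ref{rem:generators} can happen only when $j=2$. In that case $(2,1,1)$ also receives a nontrivial value, namely $\neg g_{2,1}=(1,1)$, but since $(j,1,0)=(j,j-1,0)$ collapses for $j=2$, this extra coordinate is precisely what supplies the second factor of $\alg D_2$, so the conclusion is unaffected. Your handling of the possible $I$-coordinate $(j,1,2)$ when $j\in I{\downarrow}$ is in fact more careful than the paper's own argument, which tacitly assumes this case does not arise.
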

\begin{proof}

If $J=\emptyset$ we have nothing to prove.

If $1\in J$, then since $P$ is a reduced triple, $P=(I,\{1\},\emptyset)$. Let $m$ be the product of all the elements of $I$ (if $I=\emptyset$ take $m=1$) and consider the Wajsberg function
$$
f(x) = L(0,0;\frac{1}{3m},0;\frac{2}{3m},1;1,1).
$$
This function has value $0$ in a neighborhood of $0$ and has value $1$ for every $\frac{n}{m}\neq 0$, so, by Lemma \ref{identities}, $f(\ol{g}(k,h,i))=0$ if $(k,h,i)=(1,0,0)$, otherwise $f(\ol{g}(k,h,i))=1$. Since $\ol{g}(1,0,0)$ is a generator of $\alg{\L}_1^\infty$, $\ol{g}\join f(\ol{g})$ generates a subalgebra of $\alg{B}_\Delta$ isomorphic to $\alg{\L}_1^\infty$.
Now, by Lemma \ref{whproperties}, $\vv Q(\alg{\L}_1^\infty)=\vv Q(\alg{\L}_{1,1})$; hence $\alg {\L}_{1,1}$ is embeddable into $\alg{\L}_1^\infty$ and thus  into $\alg{B}_\Delta$.

Now suppose $1\notin J$ and fix $j\in J$. Let $m$ be the product of every element of $I\cup J$ and consider the  Wajsberg function
$$
f(x) = L(0,1;\frac{1}{j}-\frac{2}{3m},1;\frac{1}{j}-\frac{1}{3m},0;\frac{1}{j}+\frac{1}{3m},0;\frac{1}{j}+\frac{2}{3m},1;1,1).
$$
This function has value $0$ in a neighborhood of $\frac{1}{j}$ and has value $1$ in a neighborhood of $\frac{n}{m}$ when $\frac{n}{m}\neq\frac{1}{j}$. By Lemma \ref{identities}, $f(\ol{g}(k,h,i))=0$ if $(k,h,i)=(j,1,0)$ or $(k,h,i)=(j,j-1,0)$. Now let us show that $f(\ol{g}(k,h,i))=1$ in all the other cases.

Take $(k,h,i)\neq(j,1,0),(j,j-1,0)$. If $i\in\{0,1\}$, then $\ol{g}(k,h,i)=(r,s)$ for some $(r,s)\in\alg{\L}_{k,h}$; notice that, by construction of $f$, $f(\ol{g}(k,h,i))\neq 1$ only if $\frac{r}{k}=\frac{1}{j}$, but this happens only if $jr=k$, but this can not happen because the triple is reduced, so $k$ can not be a multiple of $j$. If $i=2$, then $\ol{g}(k,h,i)=h$ for some $0\leq h<k$ and $k,h$ relatively prime; this time $f(\ol{g}(k,h,i))\neq 1$ only if $\frac{h}{k}=\frac{1}{j}$, that is only if $j=\frac{k}{h}$, but this means that $h$ divides $k$, which is not possible because $k,h$ are relatively prime.

Thus, if we consider $\ol{g}\join f(\ol{g})$, this generates a subalgebra of $\alg{B}_\Delta$ isomorphic to $\alg{D}_j$ as defined in Lemma \ref{gispert}; moreover, by the same lemma, we get that $\alg{L}_{j,1}$ is embeddable into $\alg{D}_j$ and hence  into $\alg{B}_\Delta$.
\end{proof}

Let $P=(I,J,K)$ be a triple (not necessarily reduced) and let $\vv Q[I,J,K]$ be defined in the following way
\begin{align*}
&\vv Q[I,\emptyset,K] = \vv Q(I,\emptyset,K) \\
&\vv Q[I,J,\emptyset] = \QQ(\{\alg{\L}_i: i \in I\} \cup \{\alg{\L}_{j,1}: j \in J\})\quad\text{if $J \ne \emptyset$}.
\end{align*}

\begin{theorem}\label{mainquasi} Let $\vv Q$ be a quasivariety of Wajsberg hoops; then $\vv Q$ is structural if and only if it is either $\vv Q= \QQ(\alg F_{\WH}(x))$ or else $\vv Q = \vv Q[P]$ for some reduced triple $P$.
\end{theorem}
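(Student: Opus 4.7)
My plan is to reduce the theorem, via the characterization of structural quasivarieties given by Theorem~\ref{structural}, to a description of the structural core of each subvariety of $\WH$. Since $\vv Q$ structural means $\vv Q = \QQ(\alg F_\vv Q(\o)) = \QQ(\alg F_{\HH(\vv Q)}(\o))$, setting $\vv V := \HH(\vv Q)$ yields a variety of Wajsberg hoops that fully determines $\vv Q$. If $\vv V = \WH$, then $\vv Q$ is the structural core of $\WH$, giving the first branch of the theorem. Otherwise, by Theorem~\ref{aglianopanti}, $\vv V = \vv V(P)$ for a unique reduced triple $P$; the remark following Theorem~\ref{aglianopanti} identifies $\QQ(\alg F_{\vv V(P)}(\o))$ with $\QQ(\alg F_{\vv V(P)}(x))$, and Theorem~\ref{thm:bdelta} identifies $\alg F_{\vv V(P)}(x)$ with $\alg B_\Delta$. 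So the whole theorem comes down to proving the equality $\QQ(\alg B_\Delta) = \vv Q[P]$ for every reduced triple $P$.

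For the inclusion $\vv Q[P] \subseteq \QQ(\alg B_\Delta)$, the three embedding results of the previous section do all the work: Theorem~\ref{embed1} gives $\alg{\L}_i \hookrightarrow \alg B_\Delta$ for each $i \in I$; Lemma~\ref{embed3} gives $\alg{\L}_{j,1} \hookrightarrow \alg B_\Delta$ for each $j \in J$; and Theorem~\ref{embed2} gives $\alg C_\o \hookrightarrow \alg B_\Delta$ when $K = \{\o\}$. Since by construction $\vv Q[P]$ is the quasivariety generated by exactly these algebras, the inclusion follows.

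The reverse inclusion $\QQ(\alg B_\Delta) \subseteq \vv Q[P]$ requires showing $\alg B_\Delta \in \vv Q[P]$. Because $\alg B_\Delta$ is a subalgebra of the product $\alg A_\Delta = \prod_{(k,h,i) \in \Delta} \alg A^i_{k,h}$, it suffices to show that each factor of $\alg A_\Delta$ lies in $\vv Q[P]$. The factors $\alg{\L}_k$ with $k \in I{\downarrow}$ embed in $\alg{\L}_i$ for the witnessing $i \in I$ by Lemma~\ref{cond1}(1), and $\alg C_\o$, if present, is already a generator of $\vv Q[P]$. The delicate factors are the $\alg{\L}_{k,h}$ indexed by $\Delta_J$: here $k \in J{\downarrow}$ and $\gcd(k,h) = 1$, so $d_{\alg{\L}_{k,h}} = 1$, and Lemma~\ref{whproperties}(7) gives $\alg{\L}_{k,h} \in \II\SU\PP_u(\alg{\L}_{k,1})$, reducing the task to embedding $\alg{\L}_{k,1}$ into $\alg{\L}_{j,1}$ for some $j \in J$ with $k \mid j$. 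Writing $j = km$, the dilation $(a,b) \mapsto (ma, b)$ is an injective $\ell$-group homomorphism of $\mathbb Z \lextimes \mathbb Z$ into itself sending the strong unit $(k,1)$ to $(j,1)$, so via Mundici's $\Gamma$-functor it restricts to a Wajsberg hoop embedding $\alg{\L}_{k,1} \hookrightarrow \alg{\L}_{j,1}$, closing the argument.

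The converse direction of the theorem is then immediate: $\QQ(\alg F_\WH(x))$ is structural by Theorem~\ref{structural}, and the identity just established shows that $\vv Q[P] = \QQ(\alg F_{\vv V(P)}(\o))$ is the structural core of $\vv V(P)$, hence also structural. I expect the main obstacle to be the treatment of the $\alg{\L}_{k,h}$ factors from $\Delta_J$ in the third paragraph: one must combine the universal-class coincidence of $\alg{\L}_{k,h}$ with $\alg{\L}_{k,1}$ provided by Lemma~\ref{whproperties}(7) together with an explicit $\ell$-group construction witnessing $\alg{\L}_{k,1} \in \II\SU(\alg{\L}_{j,1})$, and it is this second ingredient --- not visible in any of the previously stated embedding lemmas --- that drives the reverse containment through.
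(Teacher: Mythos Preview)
Your proof is correct, and its overall architecture---reducing the classification of structural subquasivarieties to the computation of the structural core $\QQ(\alg F_{\vv V(P)}(x)) = \QQ(\alg B_\Delta)$ for each reduced triple $P$---is exactly the paper's. The inclusion $\vv Q[P] \subseteq \QQ(\alg B_\Delta)$ is also handled identically, via Theorems~\ref{embed1}, \ref{embed2} and Lemma~\ref{embed3}.

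Where you diverge is in the reverse inclusion $\QQ(\alg B_\Delta) \subseteq \vv Q[P]$. You prove it \emph{directly}, by checking that every factor of $\alg A_\Delta$ lies in $\vv Q[P]$: the finite chains $\alg{\L}_k$ with $k \in I{\downarrow}$ via Lemma~\ref{cond1}, the cancellative factor trivially, and the delicate $\alg{\L}_{k,h}$ factors from $\Delta_J$ via Lemma~\ref{whproperties}(7) together with the explicit $\ell$-group embedding $(a,b) \mapsto (ma,b)$ witnessing $\alg{\L}_{k,1} \in \II\SU(\alg{\L}_{j,1})$ when $k \mid j$. This works, but it is more than is needed. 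The paper observes that $\QQ(\alg B_\Delta) = \QQ(\alg F_{\vv V(P)}(x))$ is by definition the \emph{smallest} subquasivariety $\vv Q$ with $\VV(\vv Q) = \vv V(P)$; since one sees immediately from the definition that $\VV(\vv Q[P]) = \vv V(P)$, the inclusion $\QQ(\alg B_\Delta) \subseteq \vv Q[P]$ drops out for free. Your $\ell$-group computation is a nice concrete verification, but the minimality argument makes the whole third paragraph of your proposal unnecessary.
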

\begin{proof}  As the structurally complete subquasivarieties are exactly $\QQ(\alg F_{\vv V}(x))$ for $\vv V \sse \WH$, it is enough to show that for every reduced triple $P = (I,J,K)$,
 $\vv Q[P] = \QQ(\alg F_{\vv V(P)}(x))$. Moreover  if $\vv V$ is a subvariety of $\WH$, then $\QQ(\alg F_\vv V(x))$ is the structural core of $\vv V$, i.e. the smallest subquasivariety $\vv Q$ of $\vv V$ such that $\VV(\vv Q)=\vv V$. Now for any reduced presentation $P$, we clearly have that $\VV(\vv Q[P]) = \VV(P)$; so to get the conclusion it is enough to prove that $\vv Q[P] \sse \QQ(\alg F_{\vv V(P)}(x))$.

But Theorem \ref{embed1}, \ref{embed2} and Lemma \ref{embed3} show that any generator of $\vv Q[P]$ is embeddable in $\alg B_\Delta$; so
$$
\vv Q[P] \sse \vv Q(\alg B_\Delta) = \QQ(\alg F_{\vv V(P)}(x))
$$
and the conclusion holds.
\end{proof}

Observe that $\QQ(\alg {\L}_{1,1}) = \QQ(\alg {\L}_1^\infty)$; so if $P=(I,J,K)$ is such that either $J = \emptyset$ or $J=\{1\}$, then $Q[P] = Q(P)$ and they are all in fact primitive varieties, by Corollary \ref{primitivevar}.
We can make another observation of some relevance based on the results in \cite{CzelakowskiDziobiak1990}. It is well known (and easy to prove) that the Wajsberg chains coincide with the finitely subdirectly irreducible Wajsberg hoops and the variety of Wajsberg hoops is congruence distributive. Then:
\begin{enumerate}
\ib every structural subquasivariety  $\vv Q$ is generated by finitely subdirectly irreducible Wajsberg hoops and hence it is relatively congruence distributive;
\ib in any structural subquasivariety $\vv Q$ the finitely $\vv Q$-irreducible algebras are finitely subdirectly irreducible in the absolute sense, i.e. they are all Wajsberg chains;
\ib hence any algebra $\alg A \in \vv Q$ is subdirectly embeddable in a product of Wajsberg chains that belong to $\vv Q$.
\end{enumerate}
This information might be useful for characterizing the structural subquasivarieties  $\vv Q[P]$ that are primitive (besides the ones that are already known).  We will consider the quasivarieties
$\vv Q[I,J,K]$ where $K =\emptyset$ and $J \ne \emptyset,\{1\}$ and  in this case we will write simply $\vv Q
[I,J]$.

\begin{lemma}\label{orderlemma}
Let $(I,J,\emptyset)$ and $(I',J',\emptyset)$ be two triples (not necessarily reduced), then $\vv Q[I,J] \subseteq \vv Q[I',J']$ if and only if for every $i\in I$ s.t. $i\neq 1$ and for every $j \in J$, there are $i'\in I'$ and $j'\in J'$ with $i|i'$ and $j|j'$.
\end{lemma}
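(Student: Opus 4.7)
The plan is to prove the biconditional by treating the two implications separately: the backward direction (sufficiency) by exhibiting concrete embeddings of the generators of $\vv Q[I,J]$ into members of $\vv Q[I',J']$, and the forward direction (necessity) by invoking the fact that each generator of $\vv Q[I,J]$ is subdirectly irreducible, so that the Czelakowski--Dziobiak theorem (Theorem \ref{quasivariety}(2)) forces it into $\II\SU\PP_u$ of the natural generating set of $\vv Q[I',J']$.

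For sufficiency, assume the divisibility condition. Since $\vv Q[I,J]$ is generated as a quasivariety by the finite chains $\alg{\L}_i$ for $i \in I$ and the bounded chains $\alg{\L}_{j,1}$ for $j \in J$, it suffices to place each such algebra in $\vv Q[I',J']$. For $i = 1$, $\alg{\L}_1$ embeds in every nontrivial Wajsberg hoop. For $i \neq 1$ the hypothesis gives $i' \in I'$ with $i \mid i'$, and Lemma \ref{cond1}(1) supplies $\alg{\L}_i \hookrightarrow \alg{\L}_{i'}$. For $j \in J$, choose $j' \in J'$ with $j \mid j'$, set $k = j'/j$, and consider the scaling map $\psi \colon \alg{\L}_{j,1} \to \alg{\L}_{j',1}$ given by $\psi(a,b) = (ka, b)$. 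A direct computation, using that $(j',1) = (kj,1)$ and that the Wajsberg-hoop operations on $\mathbb{Z} \lextimes \mathbb{Z}$ scale linearly in the first coordinate, shows that $\psi$ is an injective Wajsberg-hoop homomorphism, so $\alg{\L}_{j,1}$ sits inside $\alg{\L}_{j',1}$ as a subalgebra.

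For necessity, set $\vv K' = \{\alg{\L}_{i'} : i' \in I'\} \cup \{\alg{\L}_{j',1} : j' \in J'\}$. Each generator of $\vv Q[I,J]$ is a Wajsberg chain, hence subdirectly irreducible (Lemma \ref{whproperties}(1),(3)) and thus $\vv Q[I',J']$-irreducible, so Theorem \ref{quasivariety}(2) locates each in $\II\SU\PP_u(\vv K')$. Fix $i \in I$ with $i \neq 1$. Since $\vv K'$ is finite, a pigeonhole argument on the ultrafilter lets us replace any ultraproduct of members of $\vv K'$ by an ultrapower of a single $\alg B \in \vv K'$; and since $\alg{\L}_i$ is finite, \L o\'s theorem applied to the complete diagram of $\alg{\L}_i$ converts this into an embedding $\alg{\L}_i \hookrightarrow \alg B$. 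Lemma \ref{cond1} now yields either $i \mid i'$ (when $\alg B = \alg{\L}_{i'}$) or $i \mid \gcd(j',1) = 1$ (when $\alg B = \alg{\L}_{j',1}$); the second case contradicts $i \neq 1$, so the first must hold.

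For $j \in J$ I take a more conceptual route that bypasses a detailed ultrapower analysis. From $\vv Q[I,J] \subseteq \vv Q[I',J']$ we get $\alg{\L}_{j,1} \in \HH(\vv Q[I',J']) = \vv V(I',J',\emptyset)$. The algebra $\alg{\L}_{j,1}$ is one-generated and totally ordered, and being infinite and bounded it falls outside cases (1) and (3) of Lemma \ref{onegenerated2}; case (2) therefore applies, giving $\alg{\L}_{j,1} \cong \alg{\L}_{k,h}$ with $k \in J'{\downarrow}$, and matching the rank invariant forces $k = j$, i.e., $j$ divides some $j' \in J'$. The principal obstacle I anticipate is precisely this step: subchains of an ultrapower $\alg{\L}_{j',1}^U$ need not inherit the rank of the ambient chain, so a direct ultraproduct analysis does not immediately deliver the divisibility; instead one must exploit the tight structural description of one-generated algebras in $\vv V(P)$ afforded by Lemma \ref{onegenerated2}.
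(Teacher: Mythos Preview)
Your argument is correct; both directions go through. The sufficiency via the scaling map $\psi(a,b)=(ka,b)$ is a clean explicit construction (it is the restriction to $[0,u]$ of an injective unital $\ell$-group endomorphism of $\mathbb Z\lextimes\mathbb Z$, so functoriality of $\Gamma$ does the work), and your necessity argument for $i\neq 1$ via pigeonhole on the finite generating set plus \L o\'s for the finite diagram of $\alg{\L}_i$ is a correct unpacking of what the paper leaves implicit.

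Where you genuinely diverge from the paper is the $j$-case of necessity. The paper stays at the universal-class level: it argues that $\alg{\L}_{j,1}$, being infinite, cannot lie in $\II\SU\PP_u$ of the finite $\alg{\L}_{i'}$'s, hence must lie in $\II\SU\PP_u$ of some single $\alg{\L}_{j',1}$, and then declares the divisibility ``straightforward'' (implicitly invoking rank/divisibility-index invariants for $\II\SU\PP_u$ drawn from the references). You instead pass to the ambient variety $\HH(\vv Q[I',J'])$ and invoke the classification of one-generated chains (Lemma~\ref{onegenerated2}) to read off $j\in J'{\downarrow}$ from the rank of $\alg{\L}_{j,1}$. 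This is legitimate and arguably more self-contained, since Lemma~\ref{onegenerated2} is proved in the paper while the precise $\II\SU\PP_u$-transfer of rank is not. The trade-off is that you lose some information (you only use membership in the variety, not in the quasivariety), but since the desired conclusion is purely a divisibility statement, nothing is lost.

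One small wrinkle: $\vv V(I',J',\emptyset)$ and Lemma~\ref{onegenerated2} are formally only defined/stated for reduced triples, while the lemma allows non-reduced $(I',J',\emptyset)$. You should insert the routine observation that reducing $(I',J',\emptyset)$ to $(I'',J'',\emptyset)$ does not change the variety generated, keeps $J''\neq\emptyset$ (since $J'\neq\emptyset$ in the context where $\vv Q[I,J]$ is used), and satisfies $J''{\downarrow}=J'{\downarrow}$; then Lemma~\ref{onegenerated2} applies verbatim and yields $j\in J''{\downarrow}=J'{\downarrow}$.
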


\begin{proof}
If $i=1$, then $\alg{\L}_1\in\II\SU(\alg{\L}_n)$ and $\alg{\L}_1\in\II\SU(\alg{\L}_{n,1})$ for every $n$. Take now $i\neq 1$ and suppose that $\alg {\L}_i \in \II\SU\PP_u(\{\alg {\L}_{i'}: i\ \in I'\} \cup \{\alg {\L}_{j',1}: j'\in J'\})$; then $\alg {\L}_i \notin \II\SU\PP_u(\{\alg {\L}_{j',1}: j'\in J'\})$
as $\alg {\L}_i$ has divisibility index $i \ne 1$ and each $\alg {\L}_{j',1}$ has divisibility index equal to $1$.  On the other hand no $\alg {\L}_{j,1}$ can belong to
$\II\SU\PP_u(\{\alg {\L}_{i'}: i\ \in I'\})$ for obvious reasons.  From here it is straightforward to check that the conclusion holds.
\end{proof}

Now we can give an example of a class of quasivarieties that are not primitive.

\begin{proposition}\label{notprimitivequasivariety}
Let $\vv Q=\vv Q[I,J]$ with $(I,J,\emptyset)$ reduced triple; if $I{\downarrow}\cap J{\downarrow}\supsetneq\{1\}$, then $\vv Q$ is not primitive.
\end{proposition}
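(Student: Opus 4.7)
The plan is to exhibit a subquasivariety of $\vv Q=\vv Q[I,J]$ that is not equational in $\vv Q$, so that $\vv Q$ fails primitivity by definition. By hypothesis there exists $d>1$ in $I{\downarrow}\cap J{\downarrow}$; I fix such a $d$ and pick $i\in I$, $j\in J$ with $d\mathrel{|}i$ and $d\mathrel{|}j$. My candidate is
\[
\vv Q':=\QQ(\{\alg{\L}_{j',1}:j'\in J\}),
\]
which clearly sits inside $\vv Q$ since every generator already belongs to the generating set of $\vv Q$. I would then show that $\alg{\L}_d$ witnesses the strict containment $\vv Q'\subsetneq \HH(\vv Q')\cap\vv Q$.

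For $\alg{\L}_d\in\vv Q$: since $d\mathrel{|}i$, Lemma \ref{whproperties}(1) yields $\alg{\L}_d\in\II\SU(\alg{\L}_i)\subseteq\vv Q$. For $\alg{\L}_d\in\HH(\vv Q')$: recall that for a quasivariety $\vv Q'$ one has $\HH(\vv Q')=\VV(\vv Q')$, and the structural-core identity $\VV(\vv Q[P])=\vv V(P)$ used in the proof of Theorem \ref{mainquasi}, applied to the reduced presentation $(\emptyset,J,\emptyset)$, gives $\HH(\vv Q')=\vv V(\emptyset,J,\emptyset)$. Since $d\mathrel{|}j$, Lemma \ref{whproperties}(1) again places $\alg{\L}_d\in\II\SU(\alg{\L}_j^\infty)\subseteq\vv V(\emptyset,J,\emptyset)$.

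The critical step is showing $\alg{\L}_d\notin\vv Q'$. Because $d>1$, $\alg{\L}_d$ is a nontrivial finite simple Wajsberg hoop, hence subdirectly irreducible and \emph{a fortiori} $\vv Q'$-irreducible in any quasivariety that contains it. If it were in $\vv Q'$, Theorem \ref{quasivariety}(2) (Czelakowski--Dziobiak) would produce an ultrafilter $U$ on some index set $\Lambda$ and indices $j_\lambda\in J$ such that $\alg{\L}_d$ embeds in the (possibly mixed) ultraproduct $\prod_{\lambda\in\Lambda}\alg{\L}_{j_\lambda,1}/U$. Finiteness of $\alg{\L}_d$ makes the embedding property a single existential first-order sentence over its diagram, so Łos's theorem forces $\{\lambda:\alg{\L}_d\hookrightarrow\alg{\L}_{j_\lambda,1}\}\in U$, in particular nonempty. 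But then Lemma \ref{cond1}(2) forces $d\mathrel{|}\gcd(j_\lambda,1)=1$, contradicting $d>1$.

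Combining the three pieces, $\alg{\L}_d\in(\HH(\vv Q')\cap\vv Q)\setminus\vv Q'$, so $\vv Q'$ is not equational in $\vv Q[I,J]$, and $\vv Q[I,J]$ is therefore not primitive. I anticipate the delicate point to be the Łos reduction of a mixed ultraproduct to a single factor, which relies essentially on the finiteness of $\alg{\L}_d$; everything else is straightforward bookkeeping from the preceding lemmas.
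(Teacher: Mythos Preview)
Your proof is correct but takes a different route from the paper's. The paper invokes the equivalence ``primitive $\Leftrightarrow$ every subquasivariety is structural'' and exhibits a \emph{non-structural} subquasivariety: for $n>1$ in $I{\downarrow}\cap J{\downarrow}$ it shows, via Lemma~\ref{orderlemma} and Theorem~\ref{mainquasi}, that $\vv Q[n,n]\subseteq\vv Q[I,J]$ has structural core $\vv Q[\emptyset,n]\subsetneq\vv Q[n,n]$. You instead work with the original definition of primitivity and exhibit a subquasivariety that is \emph{not equational} in $\vv Q$: you take $\vv Q'=\vv Q[\emptyset,J]$ (which is itself structural!) and produce the concrete witness $\alg{\L}_d\in(\HH(\vv Q')\cap\vv Q)\setminus\vv Q'$. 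Your argument is more self-contained, requiring only the easy observation $\VV(\vv Q[P])=\vv V(P)$ rather than the full characterization of structural cores; on the other hand your \L o\'s reduction for finite $\alg{\L}_d$ essentially reproves the relevant half of Lemma~\ref{orderlemma} (the paper's proof of that lemma uses the divisibility index in the same way), so you could shorten the critical step by citing it directly. Either way, both arguments ultimately rest on the same arithmetic obstruction: $\alg{\L}_d$ with $d>1$ cannot embed in any $\alg{\L}_{j,1}$ because $\gcd(j,1)=1$.
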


\begin{proof}
If $I{\downarrow}\cap J{\downarrow}\supsetneq\{1\}$, then there exists $n\neq 1$ in $I{\downarrow}\cap J{\downarrow}$ and, by the previous lemma, $\vv Q[n,n]\subseteq\vv Q[I,J]$. Now, notice that $\vv V(\vv Q[n,n])=\vv V(\emptyset,n,\emptyset)=\vv V(\vv Q[\emptyset,n])$, so, by theorem \ref{mainquasi}, the structural core of $\vv Q[n,n]$ is $\vv Q[\emptyset,n]$. But, by the previous lemma, $\vv Q[\emptyset,n]\subsetneq Q[n,n]$ so, in particular, $\vv Q[n,n]$ is different from its structural core, hence it is not structural. Therefore $\vv Q$ contains a quasivariety that is not structural and this means that it is not primitive.
\end{proof}

We can also give an example of a class of primitive quasivarieties.

\begin{proposition}\label{primitivequasivariety}
Let $\vv Q=\vv Q[\emptyset,p]$, where $p$ is a prime number; then $\vv Q$ is primitive.
\end{proposition}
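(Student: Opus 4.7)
My plan is to show that every subquasivariety $\vv Q'\subseteq \vv Q:=\vv Q[\emptyset,p]=\QQ(\alg{\L}_{p,1})$ coincides with the structural core of the variety it generates. Set $\vv V:=\VV(\vv Q')$; by Theorem \ref{mainquasi} the sandwich $\vv Q[\vv V]\subseteq \vv Q'\subseteq \vv V$ is automatic, so the whole job reduces to identifying which subvarieties of $\VV(\alg{\L}_p^\infty)$ can appear as $\vv V$ under the constraint $\vv Q'\subseteq \vv Q$.

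The pivotal preliminary observation is that $\alg{\L}_p\notin \vv Q$: indeed $\alg{\L}_p$ is subdirectly irreducible, so Theorem \ref{quasivariety}(2) would place it in $\II\SU\PP_u(\alg{\L}_{p,1})$, contradicting Lemma \ref{whproperties}(4) since $d_{\alg{\L}_p}=p$ does not divide $\gcd(p,1)=1$. Using Theorem \ref{aglianopanti} and the primality of $p$, the subvarieties of $\VV(\alg{\L}_p^\infty)$ correspond to reduced triples with $I,J\subseteq\{1,p\}$ and $K\subseteq\{\omega\}$; a direct enumeration shows that exactly four of them contain $\alg{\L}_p$, namely $\VV(\alg{\L}_p)$, $\VV(\alg{\L}_p,\alg C_\omega)$, $\VV(\alg{\L}_p,\alg{\L}_1^\infty)$ and $\VV(\alg{\L}_p^\infty)$. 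For the first three, $\alg{\L}_p$ appears as one of the generators of the structural core $\vv Q[\vv V]$ by the definition of $\vv Q[\cdot]$, a fact reinforced by Theorem \ref{embed1} (which places $\alg{\L}_p$ inside $\alg B_\Delta$ whenever $p\in I$); thus the inclusion $\vv Q[\vv V]\subseteq \vv Q$ together with $\alg{\L}_p\notin \vv Q$ rules these three $\vv V$'s out.

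Only two cases remain. If $\vv V=\VV(\alg{\L}_p^\infty)$, then $\vv Q[\vv V]=\QQ(\alg{\L}_{p,1})=\vv Q$, and the sandwich collapses to $\vv Q'=\vv Q$, which is structural. Otherwise $\vv V$ avoids $\alg{\L}_p$, and the remaining entries of the list all sit inside $\VV(\alg{\L}_1^\infty)=\vv V(\emptyset,\{1\},\emptyset)$; by the theorem characterising the structural subvarieties of Wajsberg hoops (the $J=\{1\}$ case) this variety is structural, hence primitive by Corollary \ref{primitivevar}, so every subquasivariety of it---in particular $\vv Q'$---is structural. The step most likely to require careful attention is confirming that $\alg{\L}_p$ genuinely belongs to the structural core of each of the three forbidden $\alg{\L}_p$-containing subvarieties; this flows from the explicit generators in the definition of $\vv Q[\cdot]$, and Theorem \ref{embed1} provides the embedding into $\alg B_\Delta\cong \alg F_{\vv V(P')}(x)$ that makes the membership rigorous.
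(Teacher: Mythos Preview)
Your proof is correct and follows essentially the same strategy as the paper's: both arguments enumerate the proper subvarieties of $\vv V(\emptyset,\{p\},\emptyset)$ and conclude by observing that any subquasivariety $\vv Q'\subseteq\vv Q$ with $\VV(\vv Q')\subsetneq\vv V(\emptyset,\{p\},\emptyset)$ lands inside a primitive subvariety (one with $J\subseteq\{1\}$), so is structural by Corollary~\ref{primitivevar}. The only organisational difference is how you exclude the ``bad'' subvarieties: the paper invokes structurality of $\vv Q$ directly (proper $\vv Q'$ forces proper $\VV(\vv Q')$, hence $\vv Q'$ sits below a coatom), whereas you use the concrete fact $\alg{\L}_p\notin\vv Q$ together with $\alg{\L}_p\in\vv Q[\vv V]$ to rule out the three intermediate $\alg{\L}_p$-containing subvarieties. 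Your route has the minor advantage of explicitly treating $\vv V(\{p\},\{1\},\emptyset)$, which is in fact the unique coatom of $\Lambda(\vv V(\emptyset,\{p\},\emptyset))$ but is omitted from the paper's lattice diagram; since this variety also has $J=\{1\}$, the paper's argument remains sound, but your enumeration is the more accurate one.
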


\begin{proof}
We know that $\vv V(\vv Q)=\vv V(\emptyset,p,\emptyset)$ and, since $\vv Q$ is structural, this means that every quasivariety strictly contained in $\vv Q$ generates a variety that is strictly contained in $\vv V(\vv Q)$. So now let's consider all the subvarieties of $\vv V(\emptyset,p,\emptyset)$.

\vspace{0.4cm}
\begin{center}
\begin{tikzpicture}[scale=0.5]
\tikzstyle{every node}=[font=\scriptsize]

\filldraw [black] (5,1) circle (2pt) node[anchor=north]{$\mathbf{0}$};
\filldraw [black] (3,3) circle (2pt) node[anchor=east]{$\vv V(1,\emptyset,\emptyset)$};
\filldraw [black] (7,3) circle (2pt) node[anchor=west]{$\vv V(\emptyset,\emptyset,\omega)$};
\filldraw [black] (1,5) circle (2pt) node[anchor=east]{$\vv V(p,\emptyset,\emptyset)$};
\filldraw [black] (5,5) circle (2pt) node[anchor=west]{$\vv V(1,\emptyset,\omega)$};
\filldraw [black] (3,7) circle (2pt) node[anchor=east]{$\vv V(p,\emptyset,\omega)$};
\filldraw [black] (7,7) circle (2pt) node[anchor=west]{$\vv V(\emptyset,1,\emptyset)$};
\filldraw [black] (5,9) circle (2pt) node[anchor=south]{$\vv V(\emptyset,p,\emptyset)$};
\draw[line width=0.3pt,smooth] (5,1)--(3,3);
\draw[line width=0.3pt,smooth] (5,1)--(7,3);
\draw[line width=0.3pt,smooth] (3,3)--(1,5);
\draw[line width=0.3pt,smooth] (3,3)--(5,5);
\draw[line width=0.3pt,smooth] (7,3)--(5,5);
\draw[line width=0.3pt,smooth] (1,5)--(3,7);
\draw[line width=0.3pt,smooth] (5,5)--(3,7);
\draw[line width=0.3pt,smooth] (5,5)--(7,7);
\draw[line width=0.3pt,smooth] (3,7)--(5,9);
\draw[line width=0.3pt,smooth] (7,7)--(5,9);

\end{tikzpicture}
\end{center}

If we consider a quasivariety $\vv Q'$ that generates a variety strictly contained in $\vv V(\emptyset,p,\emptyset)$, then it has to be contained in the coatoms of the lattice, that are $\vv V(p,\emptyset,\omega)$ and $\vv V(\emptyset,1,\emptyset)$. By theorem 4.6, we know that these two varieties are structural, hence primitive, so every quasivariety contained in them is structural.
\end{proof}

Unfortunately, we fall short of characterizing all the primitive quasivarieties, due to our lack of understanding of the lattice of all the subquasivarieties. The proof of the Proposition \ref{primitivequasivariety} is based on the fact that all the quasivarieties strictly contained in $\vv Q[\emptyset,p]$ are actually varieties; so we know that the lattice of the subquasivarieties of $\vv Q$ is contained in the lattice of the subvarieties of $\vv V(\vv Q)$ which clearly is not always the case.

Consider for example the quasivariety $\vv Q=\vv Q[\emptyset,\{p,q\}]$. Using Lemma \ref{orderlemma}, we may sketch what the lattice of the subquasivarieties of $\vv Q$ looks like:

\vspace{0.5cm}
\begin{center}
\begin{tikzpicture}[scale=0.5]
\tikzstyle{every node}=[font=\scriptsize]

\filldraw [black] (5,1) circle (2pt) node[anchor=north]{$\mathbf{0}$};
\filldraw [black] (3,3) circle (2pt) node[anchor=east]{$\vv Q(1,\emptyset,\emptyset)$};
\filldraw [black] (7,3) circle (2pt) node[anchor=west]{$\vv Q(\emptyset,\emptyset,\omega)$};
\filldraw [black] (5,5) circle (2pt) node[anchor=west]{$\vv Q(1,\emptyset,\omega)$};
\filldraw [black] (3,7) circle (2pt) node[anchor=east]{$\vv Q(\emptyset,p,\emptyset)$};
\filldraw [black] (7,7) circle (2pt) node[anchor=west]{$\vv Q(\emptyset,q,\emptyset)$};
\filldraw [black] (5,9) circle (2pt) node[anchor=south]{$\vv Q(\emptyset,\{p,q\},\emptyset)$};
\draw[line width=0.3pt,smooth] (5,1)--(3,3);
\draw[line width=0.3pt,smooth] (5,1)--(7,3);
\draw[line width=0.3pt,smooth] (3,3)--(5,5);
\draw[line width=0.3pt,smooth] (7,3)--(5,5);
\draw[line width=0.3pt,smooth] (5,5)--(3,7);
\draw[line width=0.3pt,smooth] (5,5)--(7,7);
\draw[line width=0.3pt,dashed] (3,7)..controls(3.7,8.3)..(5,9);
\draw[line width=0.3pt,dashed] (3,7)..controls(4.3,7.7)..(5,9);
\draw[line width=0.3pt,dashed] (7,7)..controls(6.3,8.3)..(5,9);
\draw[line width=0.3pt,dashed] (7,7)..controls(5.7,7.7)..(5,9);

\end{tikzpicture}
\end{center}

Now, $\vv Q(\emptyset,p,\emptyset)$ and $\vv Q(\emptyset,q,\emptyset)$ are primitive by Lemma \ref{primitivequasivariety}, but we don't know if there is any quasivariety in the intervals $[\vv Q(\emptyset,p,\emptyset),\vv Q(\emptyset,\{p,q\},\emptyset)]$ and $[\vv Q(\emptyset,q,\emptyset),\vv Q(\emptyset,\{p,q\},\emptyset)]$. Note that, if there is such a quasivariety, then it cannot be generated by chains, and that would immediately imply that $\vv Q$ is not primitive, because by Theorem \ref{mainquasi} this quasivariety would not be structural.

Another problem is that, given a quasivariety $\vv Q$, the lattice of all the varieties $\vv V(\vv Q')$, with $\vv Q'\subseteq\vv Q$, is almost always strictly contained in the lattice of the subvarieties of $\vv V(\vv Q)$.
For example consider $\vv Q=\vv Q[\emptyset,pq]$; then $\vv V(\vv Q)=\vv V(\emptyset,pq,\emptyset)$ and we know that $\vv V(pq,p,\emptyset)$ is a subvariety of $\vv V(\emptyset,pq,\emptyset)$. Now, if a quasivariety generates $\vv V(pq,p,\emptyset)$ then it would not be primitive, because it would contain $\vv Q[pq,p]$ that is not primitive by Lemma \ref{notprimitivequasivariety}, but by lemma \ref{orderlemma} we know that no subquasivariety of $\vv Q$ can contain $\alg {\L}_{pq}$, so there is no subquasivariety of $\vv Q$ that can generate $\vv V(pq,p,\emptyset)$.

\section{Conclusions and future work}

What can we say about the fragments of $\mathcal{MV}^+$?  We will consider only the fragments containing $\imp$, as they are the algebraizable ones.
The $\{\imp\}$-fragment has been studied first in \cite{Komori1978}; its equivalent algebraic semantics is the variety $\mathsf{LBCK}$ of {\bf \L ukasiewicz $\mathsf{BCK}$ algebras}. We have that:
\begin{enumerate}
\ib every locally finite quasivariety of $\mathsf{LBCK}$-algebras is a primitive variety \cite{AglianoUgolini2022};
\ib the only non-locally finite subvariety is the entire variety $\mathsf{LBCK}$ \cite{Komori1978};
\ib $\mathsf{LBCK}$  it is generated as a quasivariety by its finite chains \cite{AFM};
\ib every infinite chain contains all the finite chains as subalgebras \cite{Komori1978};
\ib so if $\vv Q$ is a quasivariety which contains only finitely many chains, then $\VV(\vv Q)$ is locally finite, hence primitive;
\ib otherwise $\vv Q$ contains infinitely many chains and so $\VV(\vv Q) =  \vv Q = \mathsf{LBCK}$.
\end{enumerate}
Hence every subquasivariety of $\mathsf{LBCK}$ is a variety and $\mathsf{LBCK}$ is primitive.

For the other algebraizable fragments, observe that if $\imp$  is present then $\join$ is definable and if $\imp$ and $\cdot$ are present, then $\meet$ is definable. So the only remaining interesting fragment is the  $\{\imp, \meet, 1\}$-fragment that has been considered in \cite{Agliano2022}. Its equivalent algebraic semantics is the variety $\mathsf{LBCK}^\meet$ of $\mathsf{LBCK}$-semilattices; from the results in \cite{Agliano2022}  (and some straightforward calculations) one can prove that $\mathsf{LBCK}^\meet$ is primitive.

As far as the future work is concerned, there is a very natural path to follow. In this paper we have characterized all the structurally complete finitary extensions of $\mathcal{MV}^+$ and in \cite{Gispert2016} the same has been basically done for finitary extensions of $\mathcal{MV}$.  Hajek's Basic Logic $\mathcal{BL}$ \cite{Hajek1998} has been investigated from the algebraic point of view in many papers through its equivalent algebraic semantics, that is the variety of $\mathsf{BL}$-algebras (see for instance \cite{Agliano2017c} and the bibliography therein). In particular, in \cite{AglianoMontagna2003} (Theorem 3.7) it has been shown that there is a very deep algebraic connection between $\mathsf{BL}$-algebras (and their positive subreducts), $\mathsf{MV}$-algebras and Wajsberg hoops.
With the knowledge we have accumulated so far (and using also more general techniques introduced in \cite{AglianoUgolini2022}) we believe we can tackle the problem of describing the finitary structurally complete extensions of $\mathcal{BL}$ with some degree of success.

\providecommand{\bysame}{\leavevmode\hbox to3em{\hrulefill}\thinspace}
\providecommand{\MR}{\relax\ifhmode\unskip\space\fi MR }
\providecommand{\MRhref}[2]{%
  \href{http://www.ams.org/mathscinet-getitem?mr=#1}{#2}
}
\providecommand{\href}[2]{#2}

\end{document}